\documentclass[12pt]{amsart}
\usepackage[latin1]{inputenc}
\usepackage{color}
\usepackage{bbm}
\usepackage{amsmath,amssymb}
\usepackage{enumerate}

\newtheorem{thm}{Theorem}[section]
\newtheorem{cor}[thm]{Corollary}
\newtheorem{lem}[thm]{Lemma}
\newtheorem{prop}[thm]{Proposition}
\theoremstyle{definition}
\newtheorem{defin}[thm]{Definition}

\numberwithin{equation}{section}

\frenchspacing
\textwidth=15.5cm
\textheight=23cm
\parindent=16pt
\makeatletter
\oddsidemargin=0cm\evensidemargin=0cm
\topmargin=-0.5cm

\newcommand{\lin}{\operatorname{span}}
\newcommand{\supp}{\operatorname{supp}}
\newcommand{\dist}{\operatorname{dist}}

\newcommand{\dif}{\,\mathrm{d}}

\newcommand{\cT}{\mathcal{T}}

\newcommand{\cM}{\mathcal{M}}

\DeclareMathOperator{\polyfun}{n}

\newcommand{\charfun}{\ensuremath{\mathbbm 1}}

\DeclareMathOperator{\card}{card}

\allowdisplaybreaks

\begin{document}
\title{Unconditionality of periodic orthonormal   spline systems in $L^p$}
\author[K. Keryan]{Karen Keryan}
\address{Yerevan State University, Alex Manoogian 1, 0025, Yerevan, Armenia,
	American University of Armenia, Marshal Baghramyan 40, 0019, Yerevan,
Armenia}
\email{karenkeryan@ysu.am; kkeryan@aua.am}
\author[M. Passenbrunner]{Markus Passenbrunner}
\address{Institute of Analysis, Johannes Kepler University Linz, Austria, 4040 Linz, Altenberger Strasse 69}
\email{markus.passenbrunner@jku.at}
\keywords{orthonormal spline system, unconditional basis, $L^p$}
\subjclass[2010]{42C10, 46E30}
\date{August 29, 2017}
\begin{abstract}
Given any natural number $k$ and any dense point sequence $(t_n)$ on the torus
$\mathbb T$, we prove that the corresponding periodic orthonormal spline system of order $k$ is 
an unconditional basis in $L^p$ for $1<p<\infty$.
\end{abstract}
\maketitle 
\section{Introduction}
In this work, we are concerned with periodic orthonormal spline systems of
arbitrary order $k$ with arbitrary partitions. 
We let $(s_n)_{n=1}^\infty$ be a dense sequence of points in the
 torus $\mathbb T$
such that each point occurs at most $k$ times. Such point sequences are called
\emph{admissible}. 

For $n\geq k$, we define $\hat{\mathcal S}_n$ to be the space of polynomial splines
of order $k$ with grid points $(s_j)_{j=1}^n$. For each $n\geq k+1$, 
the space $\hat{\mathcal S}_{n-1}$ has codimension $1$ in $\hat{\mathcal S}_{n}$ and, therefore,
there exists a function $\hat f_{n}\in \hat {\mathcal S}_{n}$ that is
orthonormal to the space $\hat {\mathcal S}_{n-1}$. 
Observe that this function $\hat f_{n}$ is unique up to sign. 
In addition, let  $(\hat f_{n})_{n=1}^k$ be an orthonormal basis for $\hat{\mathcal S}_{k}$.
The
system of functions $(\hat f_{n})_{n=1}^\infty$ is called 
\emph{periodic} orthonormal spline system of order $k$ corresponding to the sequence
	$(s_n)_{n=1}^\infty$. 
{We remark that if a point $x$ occurs $m$ times in the
	sequence $(s_n)_{n=1}^\infty$ before index $N$, the space $\hat{\mathcal
	S}_N$
	consists of splines that are in particular 
$(k-1-m)$ times continuously differentiable 	at
	$x$, where here for $k-1-m\leq -1$ we mean that no restrictions at the point $x$ are
	imposed. This means that if $m=k$ and also $s_N=x$, we have
	$\mathcal{\hat S}_{N-1} = \mathcal{\hat S}_{N}$ and therefore it makes
no sense to consider non-admissible point sequences.}

The {main result of this article is the following}
\begin{thm}\label{thm:uncond}
Let $k\in\mathbb{N}$ and $(s_n)_{n\geq 1}$ be an admissible sequence of knots in
$\mathbb T$. Then the corresponding periodic orthonormal spline system of order $k$ is an unconditional basis in 
$L^p(\mathbb T)$ for every $1<p<\infty$.
\end{thm}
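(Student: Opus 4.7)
The strategy I would pursue is the standard reduction of unconditionality in $L^p$ to the $L^p$-boundedness of the associated square function
\[
Sf := \Bigl(\sum_{n\geq 1} \bigl|\langle f, \hat f_n\rangle\,\hat f_n\bigr|^2\Bigr)^{1/2},
\]
which, via Khintchine's inequality and duality, is equivalent to uniform $L^p$-bounds for all sign-change multipliers $f\mapsto \sum_n \varepsilon_n \langle f,\hat f_n\rangle\hat f_n$. The bulk of the work then lies in obtaining sharp pointwise estimates for the reproducing kernels of the orthogonal projections $\hat P_n \colon L^2(\mathbb T)\to \hat{\mathcal S}_n$ and applying Calder\'on--Zygmund theory to the rank-one differences $\hat P_n-\hat P_{n-1}$.

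The intermediate goals I would aim for are: (a) a periodic analogue of Shadrin's theorem, $\|\hat P_n\|_{L^\infty\to L^\infty}\leq C_k$ with $C_k$ depending only on the spline order $k$; (b) exponential off-diagonal decay of the entries of the inverse of the periodic B-spline Gram matrix $G_n$; and (c) the consequent exponential decay of the kernel $\hat P_n(x,y) = \sum_{i,j} N_i(x)(G_n^{-1})_{ij}N_j(y)$ in the number of knots between $x$ and $y$. Ingredient (b) should follow from Demko-type banded-inverse estimates applied to the nearly banded periodic matrix $G_n$, and (c) is then immediate using the local support of the periodic B-splines $N_i$.

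With (a)--(c) at hand, the rank-one operators $\hat P_n - \hat P_{n-1}\colon f\mapsto \hat f_n\langle f,\hat f_n\rangle$ have kernels $\hat f_n(x)\hat f_n(y)$ that inherit sharp localization and exponential decay. One verifies H\"ormander-type integral smoothness conditions for the $\ell^2$-valued kernel $(x,y)\mapsto (\hat f_n(x)\hat f_n(y))_{n\geq 1}$; combined with the trivial $L^2$-boundedness of $S$ from Parseval, this yields a weak type $(1,1)$ bound, and $L^p$-boundedness for $1<p<\infty$ follows from Marcinkiewicz interpolation and self-adjointness of $S$.

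The main obstacle will be step (a). Shadrin's proof of the corresponding bound on an interval is already intricate and uses the interval structure, so a careful adaptation is required in the periodic setting. I expect the natural route is to localize: cover $\mathbb T$ by a fixed finite family of arcs, and on each arc identify the periodic spline space, up to $O(k)$ boundary corrections, with an ordinary spline space on a subinterval of $\mathbb R$; the non-periodic bound can then be transferred by a Schur-type argument that absorbs the wrap-around contributions. The admissibility condition, which allows up to $k$ knots to coincide at a single point of $\mathbb T$, further complicates this localization, and one must verify that all constants remain uniform with respect to the multiplicity pattern of the partition.
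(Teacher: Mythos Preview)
Your reduction via Khintchine's inequality to the two-sided square function bound $\|Sf\|_p\sim_p\|f\|_p$ matches the paper exactly, and so does the overall Calder\'on--Zygmund philosophy (prove weak type and interpolate). However, you misidentify the main obstacle and your proposed route to the kernel estimates would not close.

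First, step~(a) is not the difficulty: the periodic Shadrin bound $\|\hat P_n\|_{L^\infty\to L^\infty}\leq C_k$ is already known (it follows from Shadrin's theorem and de~Boor's extension to biinfinite knot sequences, or from a direct argument in the literature), and the paper simply cites it. The genuine obstacle is step~(b). A Demko-type banded-inverse argument applied to the periodic B-spline Gram matrix only yields
\[
|(G_n^{-1})_{ij}|\lesssim \frac{q^{\hat d(i,j)}}{\max(|\supp \hat N_i|,|\supp \hat N_j|)},
\]
not the stronger estimate with $|\operatorname{conv}(\supp N_i\cup\supp N_j)|$ in the denominator. The paper states this explicitly and notes that the periodic bound is strictly weaker than the non-periodic one of Passenbrunner--Shadrin. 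For highly irregular partitions the difference is decisive: the weaker denominator does not give a uniform H\"ormander-type condition for the $\ell^2$-valued kernel $(\hat f_n(x)\hat f_n(y))_n$, so the textbook vector-valued CZ argument does not go through.

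The paper's workaround is structurally different from what you propose. Rather than proving a sharp periodic Gram-inverse bound, it associates to each $\hat f_n$ a \emph{characteristic interval} $\hat J_n$, compares $\hat f_n$ pointwise to a suitably chosen \emph{non}-periodic orthonormal spline (via a ``maximal splitting'' of the torus), and controls the difference using the strong non-periodic estimate. This yields the pointwise decay $|\hat f_n(x)|\lesssim |\hat J_n|^{1/2}\hat q^{K(C)}/|C|$ where $C$ is the shortest arc containing $\hat J_n$ and the knot interval of $x$. The square function bounds are then obtained not through a H\"ormander condition but through a distribution-function argument that splits according to whether $\hat J_n\subset B_\lambda=[\hat{\mathcal M}\charfun_{[Sf>\lambda]}>1/2]$, together with several combinatorial lemmas about the nested family $(\hat J_n)$. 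Your outline skips precisely this machinery, which is where the work lies.
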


{This is the periodic version of the main result in \cite{Passenbrunner2014}. We
now give a few comments on the history of this result. We can similarly define
the spaces {$\mathcal S_n$} corresponding to an admissible
point sequence $(t_n)$ on the interval $[0,1]$.}
A celebrated result of A. Shadrin \cite{Shadrin2001} states that the orthogonal
projection operator onto those 
spaces $\mathcal S_n$ is bounded on $L^\infty[0,1]$ by a constant that depends only on the spline order $k$.
As a consequence, $(f_n)_{n}$ {(also similarly defined to $\hat f_n$)} is a
{Schauder} basis in $L^p[0,1],$ $1\leq p<\infty$ and {in the space of continuous
functions $C[0,1]$}.  
There are various results on the unconditionality of spline systems restricting
either the spline order $k$ or the partition $(t_n)_{n\geq 0}$. The first result
in this direction is \cite{Bockarev1975}, who proves  that the classical
Franklin system---that is orthonormal spline systems of order $2$ corresponding
to dyadic knot sequence {$(1/2,1/4,3/4,1/8,3/8,\ldots)$}---is an
unconditional basis in $L^p[0,1],\ 1<p<\infty$. This argument was extended in
\cite{Ciesielski1975} to prove unconditionality of orthonormal spline systems of
arbitrary order, but still restricted to dyadic knots. Considerable effort has
been made in the past to weaken the restriction to dyadic knot sequences. In the
series of papers \cite{GevorkyanKamont1998, GevorkyanSahakian2000, GevKam2004}
this restriction was removed step-by-step for general Franklin systems, with the
final result that it was shown for each admissible point sequence $(t_n)_{n\geq
0}$ with parameter $k=2$, the associated general Franklin system forms an
unconditional basis in $L^p[0,1]$, $1<p<\infty$. Combining the methods used in
\cite{GevorkyanSahakian2000, GevKam2004} with some new inequalities from
\cite{PassenbrunnerShadrin2013} it was  proved in \cite{Passenbrunner2014} that
non-periodic orthonormal spline systems are unconditional bases in $L^p[0,1],$
$1<p<\infty$, for any spline order $k$ and any admissible point sequence
{$(t_n)$}.

The periodic analogue of Shadrin's theorem can be obtained from Shadrin's result
\cite{Shadrin2001} using \cite{deBoor2012}.  
Alternatively,  \cite{Passenbrunner2017} gives a direct proof. In case of dyadic knots, J. Domsta \cite{Domsta1976}
obtained exponential decay for the inverse of the Gram matrix of periodic
B-splines, which {were exploited} to prove the unconditionality of the periodic
orthonormal spline systems     with dyadic knots in $L^p$ for $1<p<\infty.$ In
\cite{Keryan2005} it was proved that for any admissible point sequence the
corresponding periodic Franklin system {(i.e. the case $k=2$)} 
forms an unconditional basis in $L^p[0,1],\ 1<p<\infty.$
Here we obtain an estimate for {general} periodic orthonormal spline functions, which combined with the methods developed  in \cite{GevKam2004} yield the unconditionality of  periodic orthonormal spline systems in $L^p(\mathbb{T})$.

{The main idea of the proofs of $(f_n)$ or $(\hat f_n)$ being an
	unconditional basis in $L^p$, $p\in (1,\infty)$ in the articles 
	\cite{GevKam2004, Keryan2005, Passenbrunner2014} is that corresponding
	to one single function $f_n$, a grid point interval is associated on
	which most of the mass of $f_n$ is concentrated. In case of Haar
	functions $h_n$, its support splits into two intervals $I$ and $J$,
	where on $I$, the function $h_n$ is positive and on $J$, $h_n$ is
	negative. As the associated interval, we could just use the largest one
of $I$ and $J$.}

The organization of the present article is as follows. In Section \ref{sec:prel}, we give some preliminary results 
concerning polynomials, splines and non-periodic orthonormal spline functions. Section \ref{sec:periodic} develops crucial estimates for the periodic orthonormal spline functions $\hat f_n$ and gives several relations between $\hat f_n$  and its non-periodic counterpart. 
 In Section \ref{sec:techn} we prove a few technical lemmata used in the proof of Theorem \ref{thm:uncond} and Section \ref{sec:main} finally proves Theorem \ref{thm:uncond}.

We remark that the results and most of the proofs in Sections \ref{sec:techn}
and \ref{sec:main} follow closely \cite{GevKam2004}. Let us also remark that the
proof of {the crucial} Lemma \ref{lem:techn2} is new and  {much} shorter than it was in  \cite{GevKam2004}.
\section{Preliminaries}\label{sec:prel}
Let $k$ be a positive integer. The parameter $k$ will always be used for the order of the underlying polynomials or splines. We use the notation $A(t)\sim B(t)$ to indicate the existence of two constants $c_1,c_2>0$ that depend only on $k$, such that $c_1 B(t)\leq A(t)\leq c_2 B(t)$ for all $t$, where $t$ denotes all implicit and explicit dependences that the expressions $A$ and $B$ might have. If the constants $c_1,c_2$ depend on an additional parameter $p$, we write this as $A(t)\sim_p B(t)$. Correspondingly, we use the symbols $\lesssim,\gtrsim,\lesssim_p,\gtrsim_p$.  For a subset $E$ of the real line, we denote by $|E|$ the Lebesgue measure of $E$ and by $\charfun_E$ the characteristic function of $E$.

We will need the classical Remez inequality:
\begin{thm}[Remez] \label{thm:remez}
	Let $V\subset \mathbb R$ be a compact interval in $\mathbb R$ and $E\subset V$ a measurable
	subset. Then, for all polynomials $p$ of order $k$ on $V$,
	\begin{equation*}
		\| p \|_{L_\infty(V)} \leq \bigg( 4 \frac{|V|}{|E|}\bigg)^{k-1} \| p
		\|_{L_\infty(E)}.
	\end{equation*}
\end{thm}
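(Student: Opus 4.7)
The plan is to reduce the statement to the sharp form of the Remez inequality, in which the Chebyshev polynomial $T_{k-1}$ plays the role of the extremizer. First I would apply an affine change of variables to normalize $V=[-1,1]$, and by scaling assume $\|p\|_{L_\infty(E)}=1$. Setting $s=|V|-|E|=2-|E|$, the goal becomes $\|p\|_{L_\infty([-1,1])}\leq (8/|E|)^{k-1}$, and the classical sharp bound to prove is
\begin{equation*}
\|p\|_{L_\infty([-1,1])}\leq T_{k-1}\!\left(\frac{2+s}{2-s}\right).
\end{equation*}

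To establish this sharp bound I would consider a polynomial $p_\ast$ maximizing $\|p\|_{L_\infty(V)}$ subject to $|p|\leq 1$ on $E$. After a standard compactness reduction (first take $E$ to be a finite union of intervals, then pass to the limit for general measurable $E$), such an extremizer exists. A perturbation argument then forces $p_\ast$ to equioscillate between $\pm 1$ at $k-1$ points of $V\setminus E$: otherwise one could add a small correcting polynomial of order $k$ that strictly increases $\|p_\ast\|_{L_\infty(V)}$ while preserving the constraint on $E$. Combined with the $k-1$ degrees of freedom available in $p_\ast$, this equioscillation property pins $p_\ast$ down (up to sign) as the Chebyshev polynomial $T_{k-1}$ composed with the affine map sending $[-1+s,1]$ onto $[-1,1]$, and the bound $T_{k-1}((2+s)/(2-s))$ follows.

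The final step is the elementary estimate $T_{k-1}(y)\leq (2y)^{k-1}$ valid for $y\geq 1$, which is a direct consequence of $T_{k-1}(\cosh\theta)=\cosh((k-1)\theta)\leq e^{(k-1)\theta}\leq (2\cosh\theta)^{k-1}$. Substituting $y=(2+s)/(2-s)=(4-|E|)/|E|\leq 4/|E|$ yields
\begin{equation*}
T_{k-1}(y)\leq (2y)^{k-1}\leq \left(\frac{8}{|E|}\right)^{k-1}=\left(\frac{4|V|}{|E|}\right)^{k-1},
\end{equation*}
which is the claimed inequality.

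The main obstacle is the extremal characterization: rigorously showing that the Chebyshev polynomial is the extremizer requires careful bookkeeping on the sign changes of $p_\ast\pm 1$ and on the behaviour of the admissible perturbation near the boundary of $E$. Once this is in hand, the affine normalization and the explicit estimate for $T_{k-1}$ are routine. Alternatively, one can avoid an explicit extremal theory by a direct comparison: choose a Chebyshev-type polynomial $q$ scaled so that $q(x_1)=p(x_1)$ at a point $x_1$ where $|p|$ attains its maximum on $V$ and $|q|\equiv 1$ on some interval of length $|E|$; then $|p|>|q|$ on $E$ would force $p-q$ to have more than $k-1$ sign changes on $V$, contradicting $\deg(p-q)<k$.
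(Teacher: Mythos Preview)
The paper does not prove Theorem~\ref{thm:remez} at all: it is quoted as a classical result and used as a black box (notably to derive Corollary~\ref{cor:remez}). So there is no ``paper's own proof'' to compare against.

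Your proposal follows the standard route to the Remez inequality and is correct in outline. The reduction to $V=[-1,1]$, the sharp bound $T_{k-1}\big((2+s)/(2-s)\big)$ with $s=2-|E|$, and the elementary estimate $T_{k-1}(y)\le (2y)^{k-1}$ for $y\ge 1$ all combine exactly as you wrote to give $(4|V|/|E|)^{k-1}$. Your identification of the extremal characterization as the delicate step is accurate; the usual rigorous argument proceeds in two parts: first a rearrangement/symmetrization step showing that among all measurable $E$ of given measure the worst case is an interval pushed to one endpoint of $V$ (this is where the measurability and the ``finite union of intervals then limit'' reduction you mention enters), and second the equioscillation/comparison argument identifying the Chebyshev polynomial as the extremizer when $E$ is such an interval. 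Your alternative sign-change comparison with a scaled Chebyshev polynomial is also a valid shortcut for the second part. None of this is needed for the paper's purposes, but as a proof sketch of the classical inequality it is sound.
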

This immediately yields the following corollary{:}
\begin{cor} \label{cor:remez}
Let $p$ be a polynomial of order  $k$ on a compact interval $V\subset \mathbb R$. Then
\begin{equation*}
	\big|\big\{ x \in V : |p(x)| \geq 8^{-k+1} \|p\|_{L_\infty(V)} \big\}\big| \geq |V|/2.
\end{equation*}
\end{cor}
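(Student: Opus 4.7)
\medskip
\noindent\textbf{Proof proposal.} This corollary should be immediate from Theorem~\ref{thm:remez} applied to the \emph{complement} of the target set. The plan is to argue by contradiction. Set
\[
F := \bigl\{ x \in V : |p(x)| \geq 8^{-k+1} \|p\|_{L_\infty(V)} \bigr\},
\]
and suppose, toward a contradiction, that $|F| < |V|/2$. Then the complement $E := V \setminus F$ is a measurable subset of $V$ with $|E| > |V|/2$, and by the very definition of $F$ we have $|p(x)| < 8^{-k+1} \|p\|_{L_\infty(V)}$ for every $x \in E$, hence $\|p\|_{L_\infty(E)} \leq 8^{-k+1}\|p\|_{L_\infty(V)}$.

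The next step is to feed $E$ into Theorem~\ref{thm:remez}, which gives
\[
\|p\|_{L_\infty(V)} \leq \Bigl( \frac{4|V|}{|E|} \Bigr)^{k-1} \|p\|_{L_\infty(E)}.
\]
Since $|E| > |V|/2$, we have $4|V|/|E| < 8$, so the Remez factor is strictly bounded by $8^{k-1}$. Combining with the bound on $\|p\|_{L_\infty(E)}$ from the previous paragraph yields
\[
\|p\|_{L_\infty(V)} < 8^{k-1}\cdot 8^{-k+1}\|p\|_{L_\infty(V)} = \|p\|_{L_\infty(V)},
\]
which is the desired contradiction (assuming $p\not\equiv 0$; the zero case is trivial).

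I expect no substantial obstacle. The numerology is exactly tuned: the constant $8^{-k+1}$ in the threshold matches the $(4\cdot 2)^{k-1}=8^{k-1}$ factor produced by Remez on a set of measure just above $|V|/2$, so the two cancel and any strict improvement must come from strict inequality in the measure assumption. The only bookkeeping point is therefore to start from the strict hypothesis $|F|<|V|/2$, so that strictness propagates through the $(k-1)$-st power and produces a genuine rather than borderline contradiction.
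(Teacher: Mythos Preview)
Your argument is correct and is exactly the approach the paper takes: apply Theorem~\ref{thm:remez} to the complementary set $E=\{x\in V:|p(x)|\le 8^{-k+1}\|p\|_{L_\infty(V)}\}$ and observe that the numerology forces $|E|\le |V|/2$. The paper states this in one line; your contradiction formulation is just an unpacking of the same computation.
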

\begin{proof}
 	This is a direct application of Theorem \ref{thm:remez} with the set 
$E :=  \{x\in V : |p(x)| \leq 8^{-k+1}\|p\|_{L^\infty(V)} \}$.  
\end{proof}

Let
\begin{equation}\label{eq:part}
	\mathcal
	T=(0=\tau_{-k}=\dots=\tau_{-1}<\tau_{0}\leq\dots\leq\tau_{n-1}<\tau_{n}=\dots=\tau_{n+k-1}=1)
\end{equation}
be a partition of $[0,1]$ consisting of knots of multiplicity at most $k$, that
means $\tau_i<\tau_{i+k}$ for all $0\leq i\leq n-1$. Let {$\mathcal{S}_{\mathcal
T}$} be the space of polynomial splines of order $k$ with knots $\mathcal
T$.  The basis of $L^\infty$-normalized B-spline functions in
{$\mathcal{S}_{\mathcal{T}}$} is denoted by $(N_{i,k})_{i=-k}^{n-1}$ 
or for short $(N_{i})_{i=-k}^{n-1}$. Corresponding to this basis, there exists a
biorthogonal basis of {$\mathcal{S}_{\mathcal{T}}$}, which is denoted by
$(N_{i,k}^*)_{i=-k}^{n-1}$ or $(N_{i}^*)_{i=-k}^{n-1}$.
Moreover, we write $\nu_i = \tau_{i+k}-\tau_i=|\supp N_i|$. We continue with recalling a few important results for B-splines $N_i$ and their dual functions $N_i^*$.

\begin{thm}[Shadrin \cite{Shadrin2001}] \label{thm:shadrin}
	Let $P$ be the orthogonal projection operator onto {$\mathcal S_{\mathcal
	T}$} with respect to the canonical inner product in $L^2[0,1]$.
	Then, there exists a constant $C_k$ depending only on the spline order
	$k$ such that 
	\begin{equation*}
		\| P : L^\infty[0,1] \to L^\infty[0,1] \| \leq C_k.
	\end{equation*}
\end{thm}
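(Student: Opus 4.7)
The plan is to reduce the theorem to an entry-wise decay estimate on the inverse Gram matrix of the B-spline basis. Using that the orthogonal projection can be written as
\[
Pf = \sum_{i=-k}^{n-1}\langle f,N_i\rangle N_i^*, \qquad N_j^* = \sum_{i}(G^{-1})_{ij}N_i,
\]
where $G_{ij}=\langle N_i,N_j\rangle$, and that $\|N_i\|_{L^1}\sim \nu_i$, one gets
\[
\|P\|_{L^\infty\to L^\infty}\lesssim \sup_{x\in[0,1]}\sum_j \nu_j|N_j^*(x)|.
\]
So it suffices to prove that $\sum_j \nu_j|N_j^*(x)|\lesssim 1$ uniformly in $x$ and in the partition $\mathcal T$.

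The main intermediate step is a pointwise geometric decay of the dual B-splines,
\[
\nu_j\,|N_j^*(x)|\leq C_k\, \gamma^{|i-j|} \quad\text{whenever } x\in \supp N_i,
\]
for some $\gamma = \gamma(k)\in(0,1)$. Together with the local finiteness of the B-spline basis (at most $k$ of the $N_i$ are nonzero at any $x$), this immediately gives the desired bound by summing a geometric series. In turn, this estimate would follow from an entry-wise bound
\[
\sqrt{\nu_i\nu_j}\,|(G^{-1})_{ij}|\leq C_k'\,\gamma^{|i-j|}
\]
on the inverse Gram matrix. Since $G$, rescaled diagonally, is a symmetric positive-definite $(2k{-}1)$-banded matrix, Demko's theorem on inverses of band matrices produces exactly such geometric decay, \emph{provided} the condition number of the scaled matrix $\widetilde G_{ij}=G_{ij}/\sqrt{\nu_i\nu_j}$ is bounded by a constant depending only on $k$; equivalently, the normalized B-splines $\nu_i^{-1/2}N_i$ form a Riesz basis of $\mathcal S_{\mathcal T}$ with Riesz constants depending only on $k$.

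The hard step is exactly this uniform Riesz bound, which is de Boor's classical conjecture. The naive approach of bounding the smallest eigenvalue of $\widetilde G$ directly by $L^2$--duality against explicit test functions does not go through, because there is no knot-insertion-stable formula for the smallest eigenvalue, and crude estimates degrade as knots cluster. The strategy I would pursue is an induction on the number of interior knots: track how $G^{-1}$ and its principal submatrices transform when a single knot is inserted or removed, and exploit the total positivity of the B-spline collocation and Gram matrices together with a judicious choice of extremal dual functional to prevent the condition number from blowing up. Once the uniform Riesz estimate is established, Demko's theorem and the reduction at the start of the plan finish the proof by routine manipulations. The main obstacle is thus concentrated entirely in the Riesz-basis estimate for the scaled B-splines.
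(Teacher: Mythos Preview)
The paper does not prove this theorem; it is quoted from \cite{Shadrin2001} as a black box. So there is no proof in the paper to compare against, and your task was really to sketch Shadrin's argument or an equivalent one. Your proposal does not do that, and it contains a genuine gap.

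The reduction to the Demko-type estimate
\[
\sqrt{\nu_i\nu_j}\,|(G^{-1})_{ij}|\le C_k\,\gamma^{|i-j|}
\]
is correct, and this estimate \emph{is} available: the $L^2$ Riesz-basis property of the normalized B-splines $\nu_i^{-1/2}N_i$ is classical (it is exactly \eqref{eq:deboorlpstab} with $p=2$, known since de~Boor's work in the 1970s and recorded e.g.\ in \cite{DeVoreLorentz1993}). So what you call ``the hard step'' is in fact easy and long settled. The problem is the next step: from $|(G^{-1})_{ij}|\lesssim \gamma^{|i-j|}/\sqrt{\nu_i\nu_j}$ you want $\nu_j|N_j^*(x)|\lesssim \gamma^{|i-j|}$ for $x\in\supp N_i$, but expanding $N_j^*=\sum_m (G^{-1})_{mj}N_m$ gives
\[
\nu_j|N_j^*(x)|\lesssim \sum_{m:\,x\in\supp N_m}\gamma^{|m-j|}\sqrt{\nu_j/\nu_m},
\]
and the ratio $\sqrt{\nu_j/\nu_m}$ is \emph{not} bounded uniformly in the partition. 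Thus your bound on $\|P\|_{L^\infty\to L^\infty}$ blows up when knots cluster. The estimate that \emph{would} close the argument is $|(G^{-1})_{ij}|\lesssim \gamma^{|i-j|}/\max(\nu_i,\nu_j)$, or the even sharper convex-hull estimate of Theorem~\ref{thm:maintool}, but both of these are \emph{consequences} of Shadrin's theorem (see the remarks after \eqref{eq:lpstabdual} and the statement of Theorem~\ref{thm:maintool}), so invoking them here is circular.

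You have also misidentified de~Boor's conjecture: it is not the Riesz-basis property of B-splines but precisely the statement of Theorem~\ref{thm:shadrin} itself. Shadrin's actual proof does use total positivity in an essential way, but not via knot-insertion on the Gram matrix; it proceeds through a delicate analysis of a certain null-spline and its derivatives, and no shortcut of the kind you outline is known.
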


\begin{prop}[B-spline stability]\label{prop:lpstab} 
	Let $1\leq p\leq \infty$ and $g=\sum_{j=-k}^{n-1} a_j N_j$ be a linear
	combination of B-splines. Then,
\begin{equation}\label{eq:lpstab}
|a_j|\lesssim |L_j|^{-1/p}\|g\|_{L^p(L_j)},\qquad -k\leq j\leq n-1,
\end{equation}
where $L_j$ is a subinterval $[\tau_i,\tau_{i+1}]$ of $[\tau_j,\tau_{j+k}]$ of maximal length. Additionally,
\begin{equation}\label{eq:deboorlpstab}
	\|g\|_p\sim \Big(\sum_{j=-k}^{n-1} |a_j|^p \nu_j\Big)^{1/p}=\|
	(a_j\nu_j^{1/p})_{j=-k}^{n-1}\|_{\ell^p}.
\end{equation}
Moreover, if $h=\sum_{j=-k}^{n-1} b_j N_j^*$,
\begin{equation}
	\|h\|_p\sim\Big(\sum_{j=-k}^{n-1} |b_j|^p
	\nu_j^{1-p}\Big)^{1/p}=\|(b_j\nu_j^{1/p-1})_{j=-k}^{n-1}\|_{\ell^p}.
\label{eq:lpstabdual}
\end{equation}
\end{prop}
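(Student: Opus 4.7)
The three claims build on each other: \eqref{eq:lpstab} is a local-polynomial stability estimate, the primal $L^p$-stability \eqref{eq:deboorlpstab} follows from it by a bounded-overlap argument, and the dual version \eqref{eq:lpstabdual} drops out by duality from \eqref{eq:deboorlpstab} with the aid of Shadrin's theorem. For \eqref{eq:lpstab}, I would restrict attention to the maximal knot subinterval $L_j$: on it, $g$ is a polynomial of order $k$, and the at most $k$ B-splines $N_\ell$ with $L_j\subset\supp N_\ell$ restrict to a basis of the polynomial space of order $k$ on $L_j$. By standard B-spline theory (or by direct verification after affine rescaling to a fixed reference interval), the coefficient functional $g\mapsto a_j$ is bounded in terms of $\|g\|_{L^\infty(L_j)}$ uniformly in the partition. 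Combining this with Corollary~\ref{cor:remez} in its $L^p$-consequence $\|p\|_{L^\infty(V)}\lesssim |V|^{-1/p}\|p\|_{L^p(V)}$ then yields \eqref{eq:lpstab}.

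For \eqref{eq:deboorlpstab}, the upper bound is immediate: on each knot interval $J_i=[\tau_i,\tau_{i+1}]$ at most $k$ coefficients contribute and $\|N_\ell\|_\infty\le 1$, so $\|g\|_{L^p(J_i)}^p\lesssim |J_i|\sum_{\ell:J_i\subset\supp N_\ell}|a_\ell|^p$; summing in $i$ and swapping the order of summation gives $\sum_\ell|a_\ell|^p\nu_\ell$, since $\sum_{i:J_i\subset\supp N_\ell}|J_i|=\nu_\ell$. For the lower bound, note that $L_j$ is the longest of at most $k$ subintervals comprising $\supp N_j$, so $|L_j|\sim\nu_j$; \eqref{eq:lpstab} then gives $|a_j|^p\nu_j\lesssim\|g\|_{L^p(L_j)}^p$, and because each fixed knot interval $J_i$ equals $L_j$ for at most $k$ indices $j$, summing in $j$ loses only a factor $k$ and produces $\sum_j|a_j|^p\nu_j\lesssim\|g\|_p^p$.

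Finally, for \eqref{eq:lpstabdual} I would argue by duality. Biorthogonality $\langle N_j,N_\ell^*\rangle=\delta_{j\ell}$ gives $\int gh=\sum a_jb_j$ for $g=\sum a_\ell N_\ell$ and $h=\sum b_j N_j^*$. Since $h\in\mathcal{S}_{\mathcal{T}}$ and the orthogonal projection $P$ onto $\mathcal{S}_{\mathcal{T}}$ is self-adjoint, $\|h\|_p=\sup_{\|\phi\|_{p'}=1}\int h\cdot P\phi$. Theorem~\ref{thm:shadrin}, interpolated between its $L^\infty$ statement and the dual $L^1$ bound, gives $\|P\phi\|_{p'}\lesssim\|\phi\|_{p'}$, so the supremum reduces (up to constants) to one over $g\in\mathcal{S}_{\mathcal{T}}$ with $\|g\|_{p'}\lesssim 1$. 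Invoking \eqref{eq:deboorlpstab} and applying weighted $\ell^p$--$\ell^{p'}$ duality with weight $\nu_j^{1/p'}$ yields $\|h\|_p\sim\|(b_j\nu_j^{-1/p'})\|_{\ell^p}=\|(b_j\nu_j^{1/p-1})\|_{\ell^p}$, as required. The main obstacle is this last duality step, which relies crucially on Shadrin's theorem to control the spline projection in $L^{p'}$; once that is granted, the remainder is bookkeeping with the bounded-overlap combinatorics of B-spline supports.
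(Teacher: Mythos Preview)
Your proposal is correct and follows essentially the same route as the paper. The paper simply cites \cite{DeVoreLorentz1993} for \eqref{eq:lpstab} and \eqref{eq:deboorlpstab}, and your sketch reproduces the standard arguments found there; for \eqref{eq:lpstabdual} the paper likewise reduces both directions to Shadrin's theorem plus \eqref{eq:deboorlpstab} via duality, the only cosmetic difference being that for the inequality $\|(b_j\nu_j^{1/p-1})\|_{\ell^p}\lesssim\|h\|_p$ the paper tests against the single spline $\sum b_j^{2/p'}N_{j,p}$ rather than taking a supremum over all $g\in\mathcal S_{\mathcal T}$ as you do.
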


The two inequalites \eqref{eq:lpstab} and \eqref{eq:deboorlpstab} are Lemma 4.1
and Lemma 4.2 in \cite[Chapter 5]{DeVoreLorentz1993}, respectively. Inequality
\eqref{eq:lpstabdual} is a consequence of Theorem \ref{thm:shadrin}.
For a deduction of  the lower estimate in \eqref{eq:lpstabdual} from this result, 
see \cite[Property P.7]{Ciesielski2000}. The proof of the upper estimate uses a
simple duality argument which we shall present here:

\begin{proof} [Proof of the upper estimate in \eqref{eq:lpstabdual}]
	Just consider the case $p<\infty$ and w.l.o.g. we assume $b_j\geq 0$. 
	Let $N_{j,p} = \nu_j^{-1/p} N_j$ be the $p$-normalized B-spline
	function and correspondingly $N_{j,p}^* = \nu_j^{1/p} N_j^*$ be the
	$p$-normalized dual B-spline function. By definition, the system
	$N_{j,p}^*$ forms a dual basis to the system of functions $N_{j,p}$. By
	choosing $p' = p/(p-1)$ and $\alpha = 2/p'$ (so $2-\alpha = 2/p$), we
	obtain by \eqref{eq:deboorlpstab}
	\begin{align*}
		\sum_j b_j^2 &= \langle \sum_j b_j^\alpha N_{j,p}^*, \sum_j
		b_j^{2-\alpha} N_{j,p}\rangle \leq \| \sum_j b_j^{2-\alpha} N_{j,p} \|_p \|
		\sum_j b_j^\alpha N_{j,p}^* \|_{p'} \\
		&= \|\sum_j b_j^{2-\alpha} \nu_j^{-1/p} N_j\|_p \|\sum_j b_j^\alpha \nu_j^{1/p}
		N_j^*\|_{p'} \\
		&\lesssim \Big(\sum_j b_j^2 \Big)^{1/p} \|\sum_j
		b_j^\alpha\nu_j^{1/p}N_j^*\|_{p'}.
	\end{align*}
	So we get
	\begin{equation}
		\label{eq:duallpstab_aux}
		\Big(\sum_j b_j^2 \Big)^{1/p'} \lesssim \|\sum_j b_j^\alpha
		\nu_j^{1/p}N_j^*\|_{p'}.
	\end{equation}
	Setting $a_j = b_{j}^\alpha \nu_j^{1/p}$, we see that  
	$b_j^2=(a_j\nu_j^{-1/p})^{2/\alpha} = a_j^{p'} \nu_j^{-p'/p} =
	a_j^{p'}\nu_j^{1-p'}$ and therefore, we may write
	\eqref{eq:duallpstab_aux} as 
	\begin{equation*}
		\Big(\sum_j a_j^{p'} \nu_j^{1-p'}\Big)^{1/p'} \lesssim	\|
		\sum_j
		a_j N_j^* \|_{p'},
	\end{equation*}
	which is the upper estimate in \eqref{eq:lpstabdual}.
\end{proof}

It can be shown that Shadrin's theorem actually implies the following
estimate on the B-spline Gram matrix inverse:
\begin{thm}[\cite{PassenbrunnerShadrin2013}]\label{thm:maintool}
Let $k\in\mathbb{N}$, the partition $\mathcal T$ be defined as in
\eqref{eq:part} and $(a_{ij})_{i,j=-k}^{n-1}$ be the inverse of the Gram matrix
$(\langle N_{i},N_{j}\rangle)_{i,j=-k}^{n-1}$ of B-spline functions $N_i=N_{i,k}$ of order $k$ 
corresponding to the partition $\mathcal T$. Then, 
\[
|a_{ij}|\leq C \frac{q^{|i-j|}}{|\operatorname{conv}(\supp N_i \cup \supp N_j)|},\qquad
-k\leq i,j\leq n-1,
\]
where the constants $C>0$ and $0<q<1$ depend only on the spline order $k$ and where 
by $\operatorname{conv}(U)$  for $U\subset [0,1]$ we  denote the smallest
 subinterval of $[0,1]$ that contains $U$. 
\end{thm}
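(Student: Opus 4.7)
The plan is to identify the matrix entries $a_{ij}$ with the B-spline coefficients of the dual function $N_i^*$ and then to show geometric decay of $N_i^*$ away from $\supp N_i$. From $\sum_\ell a_{i\ell}\langle N_\ell, N_j\rangle = \delta_{ij}$ one reads off that $N_i^* = \sum_\ell a_{i\ell} N_\ell$, so $a_{ij}$ is precisely the $j$-th B-spline coefficient of $N_i^*$. Applying the lower estimate in \eqref{eq:lpstab} with $p = \infty$ yields
\[
  |a_{ij}| \lesssim |L_j|^{-1}\,\|N_i^*\|_{L^\infty(L_j)},
\]
where $L_j$ is a maximal grid interval inside $\supp N_j$. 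Hence the theorem reduces to the pointwise estimate $\|N_i^*\|_{L^\infty(L_j)} \lesssim q^{|i-j|}\,|L_j|/|\operatorname{conv}(\supp N_i \cup \supp N_j)|$.

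First I would record the coarse bound $\|N_i^*\|_\infty \sim 1/\nu_i$, which is a direct consequence of Theorem \ref{thm:shadrin} via the equivalence \eqref{eq:lpstabdual} applied to $h = N_i^*$. Combined with the reduction above, this already gives the theorem in the near-diagonal regime where $|i-j| \lesssim_k 1$ and $|\operatorname{conv}(\supp N_i \cup \supp N_j)| \sim \nu_i$, since then $|L_j| \sim \nu_i$ as well.

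To upgrade the uniform bound to geometric decay, my plan is a cut-and-perturb strategy. Given $i$ and $j$ with $|i-j|$ large, select a chain of grid points $\sigma_1 < \dots < \sigma_M$, with $M \sim |i-j|/k$, placed between $\supp N_i$ and $\supp N_j$. For each $\sigma_r$, form the augmented partition $\tilde{\mathcal T}$ by raising the multiplicity of $\sigma_r$ to $k$; the enlarged spline space $\tilde{\mathcal S}$ then splits orthogonally into spline spaces on $[0,\sigma_r]$ and $[\sigma_r, 1]$, and the analogue of $N_i^*$ in $\tilde{\mathcal S}$ is supported in $[0,\sigma_r]$. The difference between the dual function in $\mathcal S_{\mathcal T}$ and its analogue in $\tilde{\mathcal S}$ lies in the $k$-dimensional complement $\tilde{\mathcal S} \ominus \mathcal S_{\mathcal T}$, and its size is controlled by Theorem \ref{thm:shadrin} applied uniformly to both spaces. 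This produces a single-cut inequality of the form $\|N_i^*\|_{L^\infty([\sigma_r,1])} \leq q\,\|N_i^*\|_{L^\infty([\sigma_{r-1},1])}$ with $q < 1$ depending only on $k$; iterating across all $\sigma_r$ gives the factor $q^{|i-j|}$. The sharper denominator $|\operatorname{conv}(\supp N_i \cup \supp N_j)|$ instead of $\nu_i$ is then obtained by distributing the exponential factor across the intervening knot intervals, using $|\operatorname{conv}(\supp N_i \cup \supp N_j)| \lesssim_k \max_r (\sigma_{r+1}-\sigma_r) \cdot (|i-j|+1)$ and absorbing the polynomial growth into a slightly weaker geometric ratio.

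The main obstacle is quantifying the single cut-off step: one must show that each decoupling strictly contracts the sup norm of $N_i^*$ on the far side by a universal factor $q < 1$ depending only on the spline order $k$, uniformly in the partition $\mathcal T$ and the location of $\sigma_r$. This is exactly where Theorem \ref{thm:shadrin} is indispensable, because it supplies a condition-number bound for the B-spline Gram matrix that is uniform in $\mathcal T$; this uniformity is what allows all constants in the iteration to depend only on $k$, yielding the stated bound with $q = q(k) < 1$ and $C = C(k)$.
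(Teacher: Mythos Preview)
The paper does not prove this theorem; it is quoted from \cite{PassenbrunnerShadrin2013} and used as a black box. So there is no ``paper's own proof'' to compare to here---only the question of whether your sketch is a viable route to the result.

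Your reduction to a pointwise bound on $N_i^*$ is correct in spirit, though note that \eqref{eq:lpstab} with $p=\infty$ gives $|a_{ij}|\lesssim \|N_i^*\|_{L^\infty(L_j)}$ with no factor $|L_j|^{-1}$. The cut-and-perturb idea is also reasonable: raising the multiplicity of an interior knot to $k$ does decouple the spline space, and one can check that $N_i^*-\tilde N_i^*$ is orthogonal to $\mathcal S_{\mathcal T}$, hence lies in the $(\le k)$-dimensional complement $\tilde{\mathcal S}\ominus\mathcal S_{\mathcal T}$.

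There are, however, two genuine gaps. First, the single-cut contraction $\|N_i^*\|_{L^\infty([\sigma_r,1])}\le q\,\|N_i^*\|_{L^\infty([\sigma_{r-1},1])}$ with a \emph{universal} $q<1$ is asserted, not proved. Shadrin's theorem gives uniform $L^\infty$ bounds on dual functions and on projections, but it does not by itself say that the part of $N_i^*-\tilde N_i^*$ living in the complement is a \emph{strict} contraction of the previous tail; you would still need a mesh-independent lower bound on how much mass of $N_i^*$ sits on $[\sigma_{r-1},\sigma_r]$ relative to $[\sigma_r,1]$, and that is precisely the hard step.

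Second, even granting the contraction, your iteration yields at best $|a_{ij}|\lesssim q^{|i-j|}/\max(\nu_i,\nu_j)$, which is the Demko-type estimate (compare Proposition~\ref{prop:geom_decay_periodic}), not the sharp denominator $|\operatorname{conv}(\supp N_i\cup\supp N_j)|$. Your proposed upgrade uses $|\operatorname{conv}|\lesssim (|i-j|+1)\max_r(\sigma_{r+1}-\sigma_r)$, but the cut argument never produces a factor $1/\max_r(\sigma_{r+1}-\sigma_r)$; it only produces $1/\nu_i$ (or $1/\nu_j$ by symmetry), and the largest intermediate grid interval can be arbitrarily large compared to $\nu_i$ and $\nu_j$. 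So the polynomial-absorption trick does not bridge the gap. The passage from the Demko denominator to the $\operatorname{conv}$ denominator is exactly the new content of \cite{PassenbrunnerShadrin2013}, and it relies on additional structure (total positivity of the B-spline Gram matrix and a determinantal/cofactor analysis) that your outline does not invoke.
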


Let $f\in L^p[0,1]$ for some $1\leq p<\infty$. Since the orthonormal spline system $(f_n)_{n\geq -k+2}$ is a basis in $L^p[0,1]$, we can write $f=\sum_{n=-k+2}^\infty a_n f_n$.
Based on this expansion, 
we define the \emph{maximal function} $Mf:=\sup_m \big| \sum_{n\leq m} a_n f_n \big|$.
Given a measurable function $g$, we denote by $\mathcal Mg$ the \emph{Hardy-Littlewood maximal function} of $g$ defined as
\[
\mathcal Mg(x):=\sup_{I\ni x} |I|^{-1} \int_I |g(t)|\dif t,
\]
where the supremum is taken over all intervals $I$ containing the point $x$.

A corollary of Theorem \ref{thm:maintool} is the following relation between $M$ and $\mathcal M$:
\begin{thm}[\cite{PassenbrunnerShadrin2013}]\label{thm:maxbound}
If $f\in L^1[0,1]$, we have
\[
Mf(t)\lesssim \mathcal M f(t),\qquad t\in[0,1].
\]
\end{thm}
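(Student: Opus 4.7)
\medskip

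\textbf{Proof plan.} The main idea is to identify each partial sum $S_m f := \sum_{n\leq m} a_n f_n$ with the orthogonal projection $P_m f$ of $f$ onto a spline space $\mathcal S_{\cT_m}$ (for the appropriate partition $\cT_m$ obtained from the first grid points), and then realize this projection as an integral operator whose kernel can be controlled in terms of the B-spline Gram matrix inverse. Concretely, if $(N_i^{(m)})$ denotes the $L^\infty$-normalized B-spline basis for $\mathcal S_{\cT_m}$ and $(a_{ij}^{(m)})$ is the inverse Gram matrix, then the reproducing kernel of $P_m$ is
\begin{equation*}
  K_m(t,s) = \sum_{i,j} a_{ij}^{(m)}\, N_i^{(m)}(t)\, N_j^{(m)}(s),
\end{equation*}
so $S_m f(t) = \int_0^1 K_m(t,s) f(s)\dif s$. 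It suffices to show that $|K_m(t,s)|$ can be dominated, independently of $m$, by something of Calder\'on--Zygmund flavour that integrates against $|f|$ into $\mathcal Mf(t)$.

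The next step is to apply Theorem \ref{thm:maintool} termwise. Writing $J_{ij}:=\operatorname{conv}(\supp N_i^{(m)}\cup\supp N_j^{(m)})$ and using $0\leq N_\ell^{(m)}\leq 1$ and $\supp N_j^{(m)}\subset J_{ij}$, I obtain
\begin{equation*}
  |S_m f(t)| \leq \sum_{i,j} C\,\frac{q^{|i-j|}}{|J_{ij}|}\, N_i^{(m)}(t) \int_{J_{ij}} |f(s)|\dif s .
\end{equation*}
The crucial geometric observation is that whenever $N_i^{(m)}(t)\neq 0$, the point $t$ lies in $\supp N_i^{(m)}\subset J_{ij}$, so $J_{ij}$ is an interval containing $t$, and therefore $|J_{ij}|^{-1}\int_{J_{ij}} |f|\leq \mathcal M f(t)$.

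Combining these, only the indices $i$ with $N_i^{(m)}(t)\neq 0$ contribute, and there are at most $k$ such indices for any fixed $t$, with $N_i^{(m)}(t)\leq 1$. Hence
\begin{equation*}
  |S_m f(t)| \leq C\,\mathcal Mf(t)\sum_{i:\,N_i^{(m)}(t)\neq 0} N_i^{(m)}(t) \sum_j q^{|i-j|} \lesssim \mathcal Mf(t),
\end{equation*}
with a constant depending only on $k$ (through $C$, $q$, and the bound $k$ on the B-splines meeting at a point). Taking the supremum over $m$ gives the claim. The only real subtlety I expect is matching up the indexing of the basis $(f_n)$ with the sequence of spline spaces $\mathcal S_{\cT_m}$ so that $S_m f$ is genuinely the orthogonal projection; once this identification is in place, the argument is a direct geometric series estimate built on Theorem \ref{thm:maintool}.
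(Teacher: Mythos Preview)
Your proposal is correct and is precisely the natural derivation the paper points to: the paper does not give its own proof of this theorem but cites it from \cite{PassenbrunnerShadrin2013} and explicitly flags it as ``a corollary of Theorem~\ref{thm:maintool}.'' Your argument---identifying the partial sum with the orthogonal projection $P_m$, writing the reproducing kernel $K_m(t,s)=\sum_{i,j} a_{ij}^{(m)} N_i^{(m)}(t)N_j^{(m)}(s)$, applying the decay estimate of Theorem~\ref{thm:maintool}, and observing that $t\in J_{ij}$ whenever $N_i^{(m)}(t)\neq 0$ so that the resulting average is dominated by $\mathcal{M}f(t)$---is exactly the standard route and matches what the cited reference does. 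One cosmetic remark: instead of bounding $\sum_{i:N_i(t)\neq 0} N_i(t)$ by $k$, you can use the partition-of-unity identity $\sum_i N_i(t)=1$, which is slightly cleaner; either way the conclusion is the same.
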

\subsection{Orthonormal spline functions, {non-periodic case}}
This section recalls some facts about  orthonormal
spline functions $f_n = f_n^{(k)}$ for fixed $k\in\mathbb N$ and 
$n\geq 2$ induced by the admissible sequence $(t_n)$.

We consider again the mesh $\mathcal T$ from before:
\begin{align*}
	\mathcal T=(0=\tau_{-k}=\dots=\tau_{-1}<\tau_{0}&\leq\dots\leq\tau_{i_0} \\
&\leq\dots\leq\tau_{n-1}<\tau_{n}=\dots=\tau_{n+k-1}=1),
\end{align*}
where we singled out the point $\tau_{i_0}$ and 
the partition $\widetilde{\mathcal T}$ is defined to be the same as $\mathcal
T$, but with $\tau_{i_0}$ removed. In the same way we denote by $(N_i:-k\leq
i\leq n-1)$ 
the B-spline functions corresponding to $\mathcal T$ and by
$(\widetilde{N}_i:-k\leq i\leq n-2)$ the B-spline functions corresponding to $\widetilde{\mathcal T}$. B\"{o}hm's formula \cite{Boehm1980} gives us the following relationship between $N_i$ and $\widetilde{N}_i$:
\begin{equation}\label{eq:boehm}
\left\{
\begin{aligned}
\widetilde{N}_i(t)&=N_i(t)  &\text{if }-k\leq i\leq i_0-k-1, \\
\widetilde{N}_i(t)&=\frac{\tau_{i_0}-\tau_i}{\tau_{i+k}-\tau_i}N_i(t)+\frac{\tau_{i+k+1}-\tau_{i_0}}{\tau_{i+k+1}-\tau_{i+1}}N_{i+1}(t) &\text{if }i_0-k\leq i\leq i_0-1, \\
\widetilde{N}_i(t)&= N_{i+1}(t) &\text{if }i_0\leq i\leq n-2.
\end{aligned}
\right.
\end{equation}

In order to calculate the orthonormal spline function corresponding to the
partitions $\widetilde{\mathcal T}$ and $\mathcal T$, we first determine a
function $g\in\lin\{N_i:-k\leq i\leq n-1\}$ such that $g\perp \widetilde{N}_j$ for
all $-k\leq j\leq n-2$. The function  $g$ is of the form (up to a multiplicative
constant)
\begin{equation}
	\label{eq:defg}
	g=\sum_{j=i_0-k}^{i_0} \alpha_j N_j^*,
\end{equation}
where $(N_j^*:-k\leq j\leq n-1)$ is the biorthogonal system to the functions
$(N_i:-k\leq i\leq n-1)$
and
\begin{equation}\label{eq:alpha2}
\alpha_j=(-1)^{j-i_0+k}\Big(\prod_{\ell=i_0-k+1}^{j-1}\frac{\tau_{i_0}-\tau_{\ell}}{\tau_{\ell+k}-\tau_{\ell}}\Big)\Big(\prod_{\ell=j+1}^{i_0-1}\frac{\tau_{\ell+k}-\tau_{i_0}}{\tau_{\ell+k}-\tau_{\ell}}\Big),\quad i_0-k\leq j\leq i_0.
\end{equation}
Alternatively, the coefficients $\alpha_j$ can be described by the recursion
\begin{equation}\label{eq:recalpha}
\begin{aligned}
\alpha_{i+1}\frac{\tau_{i+k+1}-\tau_{i_0}}{\tau_{i+k+1}-\tau_{i+1}}+\alpha_{i}\frac{\tau_{i_0}-\tau_i}{\tau_{i+k}-\tau_i}=0.
 \end{aligned}
\end{equation}

In order to give estimates for $g$ and a fortiori, for the normalized function
$f=g/\|g\|_2$, we assign to each function $g$ a characteristic interval
that is a grid point interval $[\tau_i,\tau_{i+1}]$ and lies in the proximity of
the newly inserted point $\tau_{i_0}$: 
\begin{defin}[\cite{Passenbrunner2014}, Characteristic interval for non-periodic
	sequences]\label{def:characteristic}
Let $\mathcal T,\widetilde{\mathcal T}$ be as above and $\tau_{i_0}$ be the new point in $\mathcal T$ that is not present in $\widetilde{\mathcal T}$. We define the \emph{characteristic interval $J$ corresponding to $\tau_{i_0}$} as follows. 
\begin{enumerate}
\item 
Let 
\[
\Lambda^{(0)}:=\{i_0-k\leq j\leq i_0 : |[\tau_j,\tau_{j+k}]|\leq 2\min_{i_0-k\leq \ell\leq i_0}|[\tau_\ell,\tau_{\ell+k}]| \}
\]
be the set of all indices $j$ for which the corresponding support of the B-spline function $N_j$ is approximately minimal. Observe that $\Lambda^{(0)}$ is nonempty.
\item Define
\[
\Lambda^{(1)}:=\{j\in \Lambda^{(0)}: |\alpha_j|=\max_{\ell\in \Lambda^{(0)}} |\alpha_\ell|\}.
\]
For an arbitrary, but fixed index $j^{(0)}\in \Lambda^{(1)}$, set $J^{(0)}:=[\tau_{j^{(0)}},\tau_{j^{(0)}+k}]$.
\item The interval $J^{(0)}$ can now be written as the union of $k$ grid intervals
\[
J^{(0)}=\bigcup_{\ell=0}^{k-1}[\tau_{j^{(0)}+\ell},\tau_{j^{(0)}+\ell+1}]\qquad\text{with }j^{(0)}\text{ as above}.
\]
We define the \emph{characteristic interval} $J=J(\tau_{i_0})$ to be one of the above $k$ intervals that has maximal length.
\end{enumerate}
\end{defin}

Using this definition of $J$, we recall the following estimates for $g$:

\begin{lem}[\cite{Passenbrunner2014}]\label{lem:orthsplineJinterval}Let $\mathcal T,\,\widetilde{\mathcal
	T}$ be as above and $g=\sum_{j=i_0-k}^{i_0}\alpha_j N_j^*=\sum_{j=-k}^{n-1}
	w_jN_j$ be the function from \eqref{eq:defg}, where the coefficients
	$(w_j)$ are defined through this equation.
Moreover, let $f=g/\|g\|_2$  be the $L^2$-normalized orthogonal spline function corresponding to the mesh point $\tau_{i_0}$.
Then,
\[
\|g\|_{L^p(J)}\sim\|g\|_p\sim |J|^{1/p-1},\qquad 1\leq p\leq \infty,
\]
and therefore
\[
\|f\|_{L^p(J)}\sim\|f\|_p\sim |J|^{1/p-1/2},\qquad 1\leq p\leq \infty,
\]
where $J$ is the characteristic interval associated to the point $\tau_{i_0}$ 
given in Definition \ref{def:characteristic}.

Additionally, if $d_{\mathcal T}(z)$ denotes the number of grid points from
$\mathcal T$ that lie between $J$ and $z$ including $z$ and endpoints of $J$,
then there exists a $q\in (0,1)$ depending only on $k$ such that
\begin{equation}\label{eq:wj}
	|w_j|\lesssim \frac{q^{d_{\mathcal
	T}(\tau_j)}}{|J|+\dist(\supp N_j,J)+\nu_j}\quad\text{for all }-k \leq
	j\leq n-1.
\end{equation}
Moreover, if $x<\inf J$, we have
\begin{equation}\label{eq:phiplinks}
\|f\|_{L^p(0,x)}
\lesssim \frac{q^{d_{\mathcal T}(x)}|J|^{1/2}}{(|J|+\dist(x,J))^{1-1/p}},\qquad 1\leq p\leq \infty.
\end{equation}
Similarly, for $x>\sup J$,
\begin{equation}\label{eq:phiprechts}
\|f\|_{L^p(x,1)}
\lesssim \frac{q^{d_{\mathcal T}(x)}|J|^{1/2}}{(|J|+\dist(x,J))^{1-1/p}},\qquad 1\leq p\leq \infty.
\end{equation}
\end{lem}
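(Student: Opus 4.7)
The plan is to establish the four estimates in the statement in sequence: the global $L^p$ norm of $g$, the local norm on $J$, the coefficient bound \eqref{eq:wj}, and the two tail bounds. The main tools are the dual B-spline stability \eqref{eq:lpstabdual}, the inverse Gram matrix decay from Theorem \ref{thm:maintool}, Corollary \ref{cor:remez}, and the explicit formulas \eqref{eq:alpha2}--\eqref{eq:recalpha} for the coefficients $\alpha_j$.

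For the global norm, \eqref{eq:lpstabdual} applied to $g=\sum_{j=i_0-k}^{i_0}\alpha_j N_j^*$ yields
\[
\|g\|_p\sim\Big(\sum_{j=i_0-k}^{i_0}|\alpha_j|^p\nu_j^{1-p}\Big)^{1/p}.
\]
Every factor in \eqref{eq:alpha2} lies in $[0,1]$, so $|\alpha_j|\leq 1$; combined with $|J|\sim\min_{i_0-k\leq j\leq i_0}\nu_j$, this gives the upper bound $\|g\|_p\lesssim |J|^{1/p-1}$. The matching lower bound reduces to showing $|\alpha_{j^{(0)}}|\gtrsim 1$: starting from $|\alpha_{i_0}|=1$ and transporting mass along the recursion \eqref{eq:recalpha}, whose ratios are bounded below in terms of $k$ as long as one stays among indices with comparable supports, one locates some $j\in\Lambda^{(0)}$ with $|\alpha_j|\gtrsim 1$, and the selection $j^{(0)}\in\Lambda^{(1)}$ preserves this bound. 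The single $j=j^{(0)}$ term then produces $\|g\|_p\gtrsim|J|^{1/p-1}$.

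For the coefficient bound, inner products against $N_i$ give $\alpha_i=\langle g,N_i\rangle$ for $i_0-k\leq i\leq i_0$ and $0$ otherwise, hence $w_j=\sum_{i=i_0-k}^{i_0}a_{ij}\alpha_i$, where $(a_{ij})$ denotes the inverse of the B-spline Gram matrix. Theorem \ref{thm:maintool} together with $|\alpha_i|\leq 1$ gives
\[
|w_j|\lesssim\sum_{i=i_0-k}^{i_0}\frac{q^{|i-j|}}{|\operatorname{conv}(\supp N_i\cup\supp N_j)|};
\]
the convex hull contains $J$, the gap to $\supp N_j$, and $\supp N_j$ itself, so its length is $\gtrsim|J|+\dist(\supp N_j,J)+\nu_j$, and converting the index distance $|i-j|$ into the grid-point distance $d_{\mathcal T}(\tau_j)$ (possibly absorbing a $k$-dependent factor into $q$) produces \eqref{eq:wj}. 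The local lower bound $\|g\|_{L^p(J)}\gtrsim\|g\|_p$ then follows by writing $\|g\|_{L^p(J)}\geq\|g\|_p-\|g\|_{L^p([0,1]\setminus J)}$ and using \eqref{eq:wj} to bound the second term by a small multiple of $\|g\|_p$ via a geometric sum.

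For the tail bound with $x<\inf J$, the relation $\|g\|_2\sim|J|^{-1/2}$ gives $\|f\|_{L^p(0,x)}\lesssim |J|^{1/2}\|g\|_{L^p(0,x)}$ and the triangle inequality yields
\[
\|f\|_{L^p(0,x)}\lesssim |J|^{1/2}\sum_{j:\supp N_j\cap(0,x)\neq\emptyset}|w_j|\,\|N_j\|_{L^p(0,x)}.
\]
Using $\|N_j\|_{L^p(0,x)}\lesssim\nu_j^{1/p}$ and \eqref{eq:wj}, the resulting sum is geometric in $q^{d_{\mathcal T}(\tau_j)}$, whose smallest nontrivial exponent is $d_{\mathcal T}(x)$, with the remaining factors controlled by $1/(|J|+\dist(x,J))^{1-1/p}$; this yields \eqref{eq:phiplinks}, while \eqref{eq:phiprechts} follows by symmetry. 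The most delicate step of the whole argument is the lower bound $|\alpha_{j^{(0)}}|\gtrsim 1$: individual ratios in \eqref{eq:recalpha} can degenerate, so one must exploit both the minimality condition defining $\Lambda^{(0)}$ and the maximum-selection defining $\Lambda^{(1)}$ to prevent a loss of mass; once this is in place, the remaining steps are straightforward applications of the inverse Gram matrix decay and geometric summation.
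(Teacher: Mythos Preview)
The paper does not prove this lemma; it is quoted verbatim from \cite{Passenbrunner2014} with the citation attached to the lemma header, so there is no proof in the paper to compare against. Your outline is in fact the strategy used in \cite{Passenbrunner2014}: dual B-spline stability for $\|g\|_p$, the inverse Gram decay of Theorem~\ref{thm:maintool} for the coefficients $w_j$, and geometric summation for the tail bounds.

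Two points in your sketch need correction. First, the claim $|\alpha_{i_0}|=1$ is false for $k\geq 3$: from \eqref{eq:alpha2} every $\alpha_j$ with $i_0-k+1\leq j\leq i_0-1$ is a product of exactly $k-2$ factors in $[0,1]$, and the two extreme indices $j=i_0-k,\,i_0$ carry $k-1$ such factors; none of them is automatically $1$. The lower bound $|\alpha_{j^{(0)}}|\gtrsim 1$ therefore cannot be obtained by starting at a unit coefficient and propagating; in \cite{Passenbrunner2014} it is proved by a case analysis that exploits the very definition of $\Lambda^{(0)}$ and $\Lambda^{(1)}$ together with the recursion \eqref{eq:recalpha}, and this is indeed the heart of the argument.

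Second, your subtraction argument for the local lower bound $\|g\|_{L^p(J)}\gtrsim\|g\|_p$ does not close as written: the constants hidden in the $\lesssim$ of \eqref{eq:wj} are $k$-dependent but not small, so the tail $\|g\|_{L^p([0,1]\setminus J)}$ is only known to be $\lesssim |J|^{1/p-1}$, not a \emph{small} multiple of $\|g\|_p$ suitable for subtraction. The actual argument proceeds differently: one first shows a pointwise lower bound $\|g\|_{L^\infty(J)}\gtrsim |J|^{-1}$ directly from the B-spline coefficients near $j^{(0)}$ (this is where the definition of $J$ as the largest of the $k$ grid subintervals of $J^{(0)}$ enters), and then upgrades to $L^p(J)$ via Corollary~\ref{cor:remez}. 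This is exactly the template followed in the present paper in the periodic analogue, Proposition~\ref{prop:lower_Lp_estimate_fhat}.
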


\subsection{Combinatorics of characteristic intervals}\label{sec:comb}
We additionally have a combinatorial lemma concerning the collection of
characteristic intervals corresponding to all grid points of 
an admissible sequence  $(t_n)$ of points and  the corresponding orthonormal spline functions 
$(f_n)_{n=-k+2}^\infty$ of order $k$.
For $n\geq 2$, the  partitions $\mathcal T_n$ associated to $f_n$ are defined to
consist of the grid points $(t_j)_{j=-1}^{n}$, the knots $t_{-1}=0$ and $t_0=1$ having
both multiplicity $k$ in $\mathcal T_n$ and we enumerate them as 
\begin{align*}
\mathcal T_n=(0=\tau_{n,-k}&=\dots=\tau_{n,-1}<\tau_{n,0}\leq&\\
&\leq\dots\leq\tau_{n,n-1}<\tau_{n,n}=\dots=\tau_{n,n+k-1}=1).
\end{align*}

If $n\geq 2$, we denote by $\ J_n^{(0)}$ and $J_n$ the characteristic intervals
$J^{(0)}$ and $J$ from Definition \ref{def:characteristic} associated to the new
grid point $t_n$, which is defined to be the characteristic interval associated
to $(\mathcal T_{n-1},\mathcal T_n)$. If $n$ is in the range $-k+2\leq n\leq 1$,
we additionally set $J_n:=[0,1]$.

\begin{lem}[\cite{Passenbrunner2014}]\label{lem:jinterval}
	Let $V$ be an arbitrary subinterval of $[0,1]$ and let $\beta >0$. Then there exists
	a constant $F_{k,\beta}$ only depending on $k$ and $\beta$ such that
\[
\card\{n:J_n\subseteq V, |J_n|\geq\beta|V|\} \leq F_{k,\beta},
\]
where $\card E$ denotes the cardinality of the set $E$.
\end{lem}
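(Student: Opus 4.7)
The plan is to combine a sparsity estimate on $\mathcal T_{n_i}$ near each newly inserted point $t_{n_i}$ with a pigeonhole argument on a fixed enlargement of $V$. Assume there are indices $n_1<\cdots<n_m$ with $J_{n_i}\subseteq V$ and $|J_{n_i}|\geq\beta|V|$; the goal is to bound $m$ by a constant depending only on $k$ and $\beta$, to which one adds at most the $k$ degenerate indices $-k+2\leq n\leq 1$ with $J_n=[0,1]$.

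First I would extract sparsity of $\mathcal T_{n_i}$ around $t_{n_i}$ from the size of $J_{n_i}$. By Definition~\ref{def:characteristic}, $J_{n_i}$ is the longest of the $k$ grid intervals whose union is $J^{(0)}_{n_i}$, so $\beta|V|\leq|J_{n_i}|\leq|J^{(0)}_{n_i}|\leq k|J_{n_i}|\leq k|V|$. The defining condition $j^{(0)}\in\Lambda^{(0)}$ then yields $|J^{(0)}_{n_i}|\leq 2\min_\ell |\supp N_{n_i,\ell}|$, where the minimum is over those $\ell$ for which the B-spline support in $\mathcal T_{n_i}$ contains $t_{n_i}$. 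Combining, every B-spline of $\mathcal T_{n_i}$ whose support contains $t_{n_i}$ has length at least $\beta|V|/2$. Since such supports sweep the $k$ grid intervals of $\mathcal T_{n_i}$ immediately to the left of $t_{n_i}$ and the $k$ immediately to the right, the window $[t_{n_i}-\beta|V|/2,\,t_{n_i}+\beta|V|/2]$ contains at most $2k+1$ points of $\mathcal T_{n_i}$.

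Second, because $t_{n_i}\in J^{(0)}_{n_i}$ with $|J^{(0)}_{n_i}|\leq k|V|$ and $J_{n_i}\subseteq V$, every $t_{n_i}$ lies in a fixed enlargement $V'\supseteq V$ of length at most $(2k+1)|V|$. By admissibility of $(t_n)$, each point appears at most $k$ times in the sequence, so at least $M\geq m/k$ of the $t_{n_i}$ are pairwise distinct; denote them $q_1,\dots,q_M$ and let $T_a$ be the insertion index of $q_a$.

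Finally I would cover $V'$ by $\lceil 2(2k+1)/\beta\rceil$ closed intervals of length $\beta|V|/2$. If such a covering interval $U$ contains $q_{a_1},\dots,q_{a_r}$ ordered by $T_{a_1}<\cdots<T_{a_r}$, then at time $T_{a_r}$ all of $q_{a_1},\dots,q_{a_{r-1}}$ are already grid points of $\mathcal T_{T_{a_r}}$ lying within $\beta|V|/2$ of $q_{a_r}$, so by the sparsity estimate applied at time $T_{a_r}$ we obtain $r\leq 2k+1$. Summing over the covering intervals bounds $M$, hence $m\leq kM+k$, by a constant depending only on $k$ and $\beta$. The main obstacle is the first step, namely translating the definition of the characteristic interval into the sparsity bound at the time of insertion; once that is in place the remainder is a routine packing argument.
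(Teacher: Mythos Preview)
The paper does not give its own proof of this lemma; it is quoted from \cite{Passenbrunner2014} without argument, so there is nothing in the present paper to compare your proof against.

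Your argument is correct. The key step---translating Definition~\ref{def:characteristic} into the sparsity statement that $[t_{n_i}-\beta|V|/2,\,t_{n_i}+\beta|V|/2]$ contains at most $2k+1$ grid points of $\mathcal T_{n_i}$---goes through as you outline: from $j^{(0)}\in\Lambda^{(0)}$ one gets $|J^{(0)}_{n_i}|\leq 2\min_{i_0-k\leq\ell\leq i_0}|[\tau_\ell,\tau_{\ell+k}]|$, and applying this with $\ell=i_0-k$ and $\ell=i_0$ gives $\tau_{i_0}-\tau_{i_0-k}\geq\beta|V|/2$ and $\tau_{i_0+k}-\tau_{i_0}\geq\beta|V|/2$, so the window is contained in $[\tau_{i_0-k},\tau_{i_0+k}]$, which carries exactly $2k+1$ grid indices. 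The localization $t_{n_i}\in V'$ with $|V'|\leq(2k+1)|V|$ is also correct, since $t_{n_i}=\tau_{i_0}\in J^{(0)}_{n_i}$ and $J_{n_i}\subseteq J^{(0)}_{n_i}$ with $|J^{(0)}_{n_i}|\leq k|J_{n_i}|\leq k|V|$. The remaining pigeonhole/packing step is routine; just be careful to specify that $T_a$ is one of the indices $n_i$ with $t_{n_i}=q_a$ (any choice works), so that the sparsity bound derived for the $n_i$'s applies at time $T_{a_r}$, and so that the earlier $q_{a_j}$ are indeed present in $\mathcal T_{T_{a_r}}$. With that clarification your proof is complete and self-contained.
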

\section{Periodic splines}\label{sec:periodic}
In this section, we give estimates for periodic orthonormal spline functions $(\hat f_n)$ 
{similar to the ones contained in Lemma \ref{lem:orthsplineJinterval} for non-periodic
orthonormal splines.}
The main difficulty in proving such estimates is that we do not have a periodic version
of Theorem \ref{thm:maintool} at our disposal. Instead, we estimate the
differences between $\hat f_n$ and two suitable non-periodic orthonormal spline
functions $f_n$.

Let {$n\geq k$} and $(\hat N_i)_{i=0}^{n-1}$ be periodic B-spline functions {of order $k$} with arbitrary admissible grid
$(\sigma_j)_{j=0}^{n-1}$ on $\mathbb T$ canonically identified with $[0,1)$:
\[
	\hat{\mathcal T} = (0 \leq \sigma_0 \leq \sigma_1 \leq \cdots \leq
	\sigma_{n-2} \leq \sigma_{n-1} < 1).
\]
Moreover, let  $(\hat N_i^*)_{i=0}^{n-1}$ be the dual basis to $(\hat
N_i)_{i=0}^{n-1}$ and $\hat {\mathcal S}_{\hat {\mathcal T}}$ be the linear span of
$(\hat N_i)_{i=0}^{n-1}$.
First, we recall that we have a periodic version of Shadrin's theorem:
\begin{thm}
	\label{thm:boundedperiodic}
	Let $\hat P$ be the orthogonal projection operator onto  $\hat {\mathcal S}_{\hat {\mathcal T}}$ with respect to the canonical inner product in $L^2(\mathbb
	T)$.
	Then, there exists a constant $C_k$ depending only on the spline order
	$k$ such that 
	\begin{equation*}
		\| \hat P : L^\infty(\mathbb T) \to L^\infty(\mathbb T) \| \leq C_k.
	\end{equation*}
\end{thm}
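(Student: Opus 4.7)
The plan is to deduce Theorem \ref{thm:boundedperiodic} from Shadrin's non-periodic theorem (Theorem \ref{thm:shadrin}) by a periodization argument in the spirit of \cite{deBoor2012}. Fix $f\in L^\infty(\mathbb T)$ and let $\tilde f\in L^\infty(\mathbb R)$ be its $1$-periodic extension. For each integer $m\geq 1$, construct the non-periodic partition $\mathcal T^{(m)}$ of $[-m,m+1]$ obtained by concatenating $2m+1$ consecutive translates of $\hat{\mathcal T}$ and then imposing multiplicity $k$ at the two endpoints $-m$ and $m+1$. Let $P^{(m)}$ be the $L^2$-orthogonal projector onto the non-periodic spline space $\mathcal S_{\mathcal T^{(m)}}$. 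Applying Theorem \ref{thm:shadrin} to $\tilde f|_{[-m,m+1]}$ yields
\[
\|P^{(m)}\tilde f\|_{L^\infty[-m,m+1]}\leq C_k\|f\|_{L^\infty(\mathbb T)}
\]
with $C_k$ independent of $m$ and of the periodic partition $\hat{\mathcal T}$.

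Next I would pass to the limit $m\to\infty$ in the central period. The restrictions $g_m:=P^{(m)}\tilde f|_{[0,1]}$ lie in the $L^\infty$-ball of radius $C_k\|f\|_\infty$ and in the finite-dimensional space of splines on $[0,1]$ with knots $\hat{\mathcal T}$ (a priori \emph{without} matching of derivatives at $0\equiv 1$), so a subsequence converges uniformly to some $h$. To identify $h=\hat Pf$ I would verify: (a) the jumps of the lower $k-1$ derivatives of $P^{(m)}\tilde f$ across $0\equiv 1$ are exponentially small in $m$, forcing $h\in\hat{\mathcal S}_{\hat{\mathcal T}}$; and (b) the orthogonality defects $\langle \tilde f-P^{(m)}\tilde f,\hat N_i\rangle_{L^2(\mathbb T)}$ tend to zero for each $i$, forcing $f-h\perp\hat{\mathcal S}_{\hat{\mathcal T}}$. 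Combined with uniqueness of the orthogonal projector, these properties give $h=\hat Pf$ and hence $\|\hat Pf\|_\infty\leq C_k\|f\|_\infty$.

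The main obstacle is establishing the exponential smallness in (a) and (b). Since the grid intervals of $\hat{\mathcal T}$ need not be comparable in size, decay must be measured in the \emph{number of grid intervals} separating the central period from the boundary, not in Euclidean distance. This is precisely what Theorem \ref{thm:maintool} supplies: expanding $P^{(m)}\tilde f$ in B-splines over $\mathcal T^{(m)}$, the Demko--Moss--Smith-type bound on the inverse B-spline Gram matrix controls the contribution of coefficients associated to B-splines whose supports touch the boundary by $q^m\|f\|_\infty$ for some $q\in(0,1)$ depending only on $k$, which is exactly the estimate needed for both (a) and (b). As an alternative route, followed in \cite{Passenbrunner2017}, one bypasses the limiting argument altogether by proving a periodic analogue of Theorem \ref{thm:maintool}, i.e.\ exponential decay of the entries of the inverse periodic B-spline Gram matrix in the cyclic distance, and applying it to the kernel $K(x,y)=\sum_i\hat N_i(x)\hat N_i^*(y)$ of $\hat P$ via $\|\hat P\|_{L^\infty\to L^\infty}\leq\sup_x\int_{\mathbb T}|K(x,y)|\dif y$; this is conceptually cleaner but requires porting the full argument of \cite{PassenbrunnerShadrin2013} to the circulant setting.
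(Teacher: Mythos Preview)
The paper does not actually give a proof of this theorem: after the statement it simply refers to \cite{Shadrin2001,deBoor2012} for the route via bi-infinite knot sequences on $\mathbb R$ carried over to $\mathbb T$, and to \cite{Passenbrunner2017} for a direct argument. Your proposal is a concrete outline of the first of these two cited routes, and you also correctly describe the second as an alternative, so your approach is exactly what the paper points to.

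One clarifying remark on the execution: in your step~(b), for those periodic B-splines $\hat N_i$ whose support lies entirely in $(0,1)$ the defect $\langle \tilde f-P^{(m)}\tilde f,\hat N_i\rangle_{L^2(\mathbb T)}$ vanishes identically, since then $\hat N_i$ agrees on $[0,1]$ with a non-periodic B-spline $N_j^{(m)}$ of $\mathcal T^{(m)}$ whose full support is contained in $[0,1]$. For the finitely many $\hat N_i$ that wrap around $0\sim 1$, a change of variable shows that the defect equals an integral of $P^{(m)}\tilde f(\cdot+1)-P^{(m)}\tilde f(\cdot)$ against a fixed B-spline near $0$, so (b) is in fact a consequence of (a). Thus the only genuine analytic ingredient is the exponential smallness in (a), and Theorem~\ref{thm:maintool} is the right tool for it, as you say.
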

We refer to the articles
\cite{Shadrin2001,deBoor2012} for a proof of this result for infinite knot
sequences on the real line, which then can be carried over to $\mathbb T$.
Alternatively, we refer to \cite{Passenbrunner2017} for a direct proof.

Next,  note that B-spline stability carries over to the periodic
setting:
\begin{prop}\label{prop:periodicstability}
	Let $n\geq 2k$ and $1\leq p\leq \infty$.
	Then, for $g= \sum_{j=0}^{n-1} a_j \hat N_j$, we have 
	\begin{equation*}
		\| g \|_p \sim \Big( \sum_{j=0}^{n-1} |a_j|^p |\supp \hat
		N_j|\Big)^{1/p} = \| (a_j \cdot |\supp \hat
		N_j|^{1/p})_{j=0}^{n-1}\|_{\ell^p}.
	\end{equation*}
\end{prop}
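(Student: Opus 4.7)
The plan is to reduce both directions of the stated equivalence to the non-periodic B-spline stability recorded in Proposition~\ref{prop:lpstab}, exploiting the fact that the periodic and non-periodic B-spline structures agree locally on each B-spline support.

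For the upper estimate $\|g\|_p \lesssim \bigl(\sum_j |a_j|^p |\supp \hat N_j|\bigr)^{1/p}$, I would invoke the periodic partition of unity $\sum_j \hat N_j \equiv 1$: since the $\hat N_j(x)$ are non-negative and sum to one at every point $x \in \mathbb T$, Jensen's inequality yields
$$
|g(x)|^p = \Big|\sum_j a_j \hat N_j(x)\Big|^p \leq \sum_j |a_j|^p \hat N_j(x),\qquad 1\leq p<\infty,
$$
and integrating together with $\int_{\mathbb T} \hat N_j = |\supp \hat N_j|/k$ gives the desired bound. For $p=\infty$ the same inequality is immediate from $|g(x)| \leq \max_j |a_j|$.

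For the lower estimate I would fix an index $j$ and transfer the problem to the real line. The hypothesis $n\geq 2k$ enters here: since $\supp \hat N_j$ covers at most $k$ knot intervals out of $n\geq 2k$ and only $2k-1$ periodic B-splines have support meeting $\supp \hat N_j$, I can pick a cut point $\sigma_\ell\in\mathbb T$ so that a neighborhood of $\supp \hat N_j$ containing the supports of all these overlapping B-splines lifts, without wrapping, to an interval of $\mathbb R$. Appending boundary knots of multiplicity $k$ at the cut point, $g$ becomes a genuine non-periodic spline on this lifted interval, and the coefficient of the lifted $\hat N_j$ coincides with the periodic coefficient $a_j$, because the B-spline basis is a local basis and the dual functional to $\hat N_j$, built via divided differences on $\supp \hat N_j$, does not see the boundary modification. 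The non-periodic pointwise stability \eqref{eq:lpstab} then gives $|a_j|\lesssim |L_j|^{-1/p}\|g\|_{L^p(L_j)}$, where $L_j\subset \supp \hat N_j$ is a grid subinterval of maximal length and hence satisfies $|L_j|\geq |\supp \hat N_j|/k$. Raising to the $p$-th power, summing over $j$, and using that each grid interval is contained in the supports of at most $k$ periodic B-splines (so appears as $L_j$ for at most $k$ indices), I obtain
$$
\sum_j |a_j|^p |\supp \hat N_j| \lesssim \sum_j \|g\|_{L^p(L_j)}^p \lesssim \|g\|_p^p.
$$

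The main technical point to argue carefully is this cut-and-lift identification, namely that the periodic coefficient of $\hat N_j$ in the expansion of $g$ is literally equal to the coefficient of the corresponding non-periodic B-spline in the local non-periodic expansion. This rests on the locality of the B-spline dual basis and is the only step where one has to leave the purely local inequality \eqref{eq:lpstab} behind to translate between the periodic and non-periodic settings; once it is in place, the rest of the argument is bookkeeping.
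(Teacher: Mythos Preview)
The paper does not give a proof of this proposition; it simply asserts that ``B-spline stability carries over to the periodic setting'' and states the result. Your argument correctly supplies the missing details of that carry-over.

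Your upper estimate via Jensen's inequality and the partition of unity $\sum_j \hat N_j \equiv 1$ is exactly the standard non-periodic argument, and the identity $\int_{\mathbb T}\hat N_j = |\supp \hat N_j|/k$ is correct. For the lower estimate, the cut-and-lift reduction is valid and is one natural way to read ``carries over''. One could streamline it slightly: the proof of \eqref{eq:lpstab} in the non-periodic case is purely local---it uses only the polynomial $g|_{L_j}$ and the knots $\sigma_j,\dots,\sigma_{j+k}$ (via de Boor--Fix functionals or an equivalent finite-dimensional argument)---so once $n\geq 2k$ ensures that no B-spline support wraps around the torus, the inequality $|a_j|\lesssim |L_j|^{-1/p}\|g\|_{L^p(L_j)}$ holds verbatim without explicitly constructing a non-periodic spline space. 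Your version makes the identification of coefficients more explicit, which is harmless. The summation and the finite-overlap bookkeeping (each grid interval lies in at most $k$ supports) are correct.
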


If we define the matrix $(\hat a_{ij})_{i,j=0}^{n-1} = (\langle \hat N_i^*,
\hat N_j^*\rangle)_{i,j=0}^{n-1}$, we have the following geometric decay
inequality, which is a
consequence of Theorem \ref{thm:boundedperiodic} on
the uniform boundedness of the periodic orthogonal spline
projection operator:
\begin{prop}
	\label{prop:geom_decay_periodic}
	Let  $n\geq 2k$. Then,
	there exists a constant $q\in (0,1)$ depending only on the spline order $k$ such that 
	\begin{equation*}
		|\hat a_{ij}| \lesssim \frac{q^{\hat d(i,j)}}{\max(|\supp \hat N_i|,
		|\supp \hat N_j|)}, \qquad 0\leq i,j\leq n-1,
	\end{equation*}
	where $\hat d$ is the periodic distance function on $\{0,\ldots, n-1\}$.
\end{prop}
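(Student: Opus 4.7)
First I would observe that $\hat a_{ij}$ is precisely the $(i,j)$-entry of the inverse Gram matrix $G^{-1}$ of the periodic B-splines $(\hat N_i)$, where $G_{ij}=\langle \hat N_i,\hat N_j\rangle$. Indeed, expanding $\hat N_j^*=\sum_\ell (G^{-1})_{j\ell}\hat N_\ell$ and using biorthogonality yields $\hat a_{ij}=\langle \hat N_i^*,\hat N_j^*\rangle=(G^{-1})_{ij}$. So the proposition asserts off-diagonal geometric decay of the inverse periodic Gram matrix; here $\max(|\supp \hat N_i|,|\supp \hat N_j|)$ plays, in the periodic setting, the role taken by the convex-hull length $|\operatorname{conv}(\supp N_i\cup\supp N_j)|$ in the non-periodic Theorem~\ref{thm:maintool}.

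The plan is to parallel the non-periodic proof of \cite{PassenbrunnerShadrin2013}, with Theorem~\ref{thm:boundedperiodic} substituted for Shadrin's theorem. The central technical step will be an $L^\infty$-decay estimate for the dual B-splines: for every subarc $I\subset\mathbb T$,
\[
\|\hat N_j^*\|_{L^\infty(I)}\lesssim \frac{q^{d(I,\supp\hat N_j)}}{|\supp \hat N_j|},
\]
where $d(I,\supp \hat N_j)$ counts the knots of $\hat{\mathcal T}$ separating $I$ from $\supp \hat N_j$ on the torus. Granting this, the proof concludes as follows. By the pointwise form of B-spline stability (the periodic analogue of \eqref{eq:lpstab}), the coefficient $(G^{-1})_{ji}$ of $\hat N_i$ in the expansion $\hat N_j^*=\sum_\ell (G^{-1})_{j\ell}\hat N_\ell$ satisfies $|(G^{-1})_{ji}|\lesssim \|\hat N_j^*\|_{L^\infty(L_i)}$ for some grid interval $L_i\subset\supp\hat N_i$ of maximal length. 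Applying the pointwise decay estimate with $I=L_i$ yields $|(G^{-1})_{ij}|\lesssim q^{\hat d(i,j)}/|\supp \hat N_j|$, and the symmetry $(G^{-1})_{ij}=(G^{-1})_{ji}$ produces the companion bound with $|\supp\hat N_i|$ in the denominator. Taking the smaller of the two yields the desired $\max$.

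The main obstacle will be the pointwise decay of $\hat N_j^*$. In the non-periodic case this is the technical core of \cite{PassenbrunnerShadrin2013} and is proved by iteration: the base bound $\|N_j^*\|_\infty\lesssim 1/|\supp N_j|$ is a direct consequence of Shadrin's theorem, while moving $I$ one knot further from $\supp N_j$ costs a factor $q<1$, obtained from the orthogonality of $N_j^*$ to every other B-spline together with the $L^\infty$-bound on the orthogonal projection. The same mechanism should work in the periodic setting, now invoking Theorem~\ref{thm:boundedperiodic} instead. The only new subtlety is that $\hat N_j^*$ does not vanish outside $\supp\hat N_j$ (unlike its non-periodic counterpart $N_j^*$), so the decay must be measured in both directions around the torus with respect to the periodic knot distance; apart from this, the non-periodic argument transfers directly.
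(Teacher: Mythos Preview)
Your approach is correct in outline but differs from the paper's. The paper does not adapt the iterative decay argument of \cite{PassenbrunnerShadrin2013}; it simply invokes Demko's theorem \cite{Demko1977} on inverses of banded matrices. The periodic Gram matrix $G=(\langle \hat N_i,\hat N_j\rangle)$ is cyclically banded with bandwidth depending only on $k$, and by Theorem~\ref{thm:boundedperiodic} together with periodic B-spline stability (Proposition~\ref{prop:periodicstability}) the suitably renormalized matrix is bounded and boundedly invertible on $\ell^\infty$ with constants depending only on $k$. Demko's theorem then delivers the geometric factor $q^{\hat d(i,j)}$ in one stroke, and the size factor $1/\max(|\supp\hat N_i|,|\supp\hat N_j|)$ drops out of the normalization (or, equivalently, from $\|\hat N_j^*\|_\infty\lesssim 1/|\supp\hat N_j|$ and symmetry). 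This is the argument of \cite{Ciesielski2000} transplanted to the torus.

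Your route via a pointwise decay estimate for $\hat N_j^*$ would also work and is in the spirit of \cite{PassenbrunnerShadrin2013}, but it is considerably more laborious for the present, weaker statement: the whole point of \cite{PassenbrunnerShadrin2013} is to upgrade the denominator from $\max(|\supp N_i|,|\supp N_j|)$ to $|\operatorname{conv}(\supp N_i\cup\supp N_j)|$, and that refinement is precisely what the paper does \emph{not} have periodically (and must work around later). One small correction: the non-periodic dual B-spline $N_j^*$ does \emph{not} vanish outside $\supp N_j$ either---it is a global spline that merely decays geometrically away from $\supp N_j$---so the ``new subtlety'' you identify is already present in the non-periodic case; the genuine periodic feature is only that the decay has to be tracked in both directions around $\mathbb T$ via the cyclic distance $\hat d$.
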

The proof of this proposition follows along the same lines as in the
non-periodic case, where B-spline stability and Demko's theorem \cite{Demko1977} on the geometric
decay of inverses of band matrices is used. Its proof in the non-periodic case
can be found in \cite{Ciesielski2000}.

Observe that the estimate contained in this proposition for periodic splines is
not as good as the one from Theorem \ref{thm:maintool} for non-periodic splines
due to the {different} term in the denominator. 
Next, we also get stability of the periodic dual B-spline functions $(N_i^*)$:
\begin{prop}
	Let $n\geq 2k$, $1\leq p\leq\infty$ and $h=\sum_{j=0}^{n-1} b_j \hat N_j^*$, then
	\begin{equation*}
		\| h \|_p \sim \Big( \sum_{j=0}^{n-1} |b_j|^p |\supp \hat
		N_j|^{1-p}\Big)^{1/p} = \| (b_j\cdot |\supp \hat N_j|^{1/p -
		1})_{j=0}^{n-1}\|_{\ell^p}.
	\end{equation*}
\end{prop}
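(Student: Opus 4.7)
The strategy is to mirror the proof of the non-periodic dual stability \eqref{eq:lpstabdual} recalled above, substituting Proposition~\ref{prop:periodicstability} (primal periodic stability) for \eqref{eq:deboorlpstab}. Write $\hat\nu_j := |\supp \hat N_j|$ throughout and assume without loss of generality that $b_j \ge 0$.

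For the \emph{upper} estimate, introduce the $p$-normalized periodic B-splines $\hat N_{j,p} := \hat\nu_j^{-1/p}\hat N_j$ and their duals $\hat N_{j,p}^* := \hat\nu_j^{1/p}\hat N_j^*$; these remain biorthogonal and by Proposition~\ref{prop:periodicstability} the system $(\hat N_{j,p})$ is $L^p$-stable with constants depending only on $k$. Choosing $\alpha = 2/p'$ so that $(2-\alpha)p = 2$, biorthogonality and Hölder's inequality give
\[
\sum_j b_j^2 = \Big\langle \sum_j b_j^\alpha \hat N_{j,p}^*,\, \sum_j b_j^{2-\alpha}\hat N_{j,p}\Big\rangle \le \Big\|\sum_j b_j^{2-\alpha}\hat N_{j,p}\Big\|_p \Big\|\sum_j b_j^\alpha \hat N_{j,p}^*\Big\|_{p'}.
\]
Proposition~\ref{prop:periodicstability} bounds the first factor by a constant multiple of $(\sum b_j^2)^{1/p}$, so dividing and setting $a_j = b_j^\alpha \hat\nu_j^{1/p}$ yields the upper estimate exactly as in the computation following \eqref{eq:lpstabdual}.

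For the \emph{lower} estimate, we exploit $L^p$--$L^{p'}$ duality: $\|h\|_p \ge |\langle h, g\rangle|/\|g\|_{p'}$ for any $g \in L^{p'}(\mathbb T)$. The natural test function sits in $\hat{\mathcal S}_{\hat{\mathcal T}}$, namely $g := \sum_j c_j \hat N_j$ with $c_j := \sgn(b_j)|b_j|^{p-1}\hat\nu_j^{1-p}$ (read as $\sgn(b_j)$ when $p=1$). Biorthogonality gives $\langle h, g\rangle = \sum_j |b_j|^p \hat\nu_j^{1-p}$, and using the identities $(p-1)p' = p$ and $(1-p)p'+1 = 1-p$, Proposition~\ref{prop:periodicstability} yields
\[
\|g\|_{p'} \sim \Big(\sum_j |b_j|^p \hat\nu_j^{1-p}\Big)^{1/p'}.
\]
Dividing $\langle h, g\rangle$ by $\|g\|_{p'}$ produces the weighted $\ell^p$-sum to the power $1 - 1/p' = 1/p$, which is the desired lower bound. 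The endpoint $p=\infty$ is handled by instead concentrating $c_j$ at a single index where $|b_j|/\hat\nu_j$ is nearly maximal.

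No serious obstacle arises: once Proposition~\ref{prop:periodicstability} is in hand, both estimates reduce to the same formal manipulations used in the non-periodic case, and nowhere do we need the (unavailable) periodic analogue of Theorem~\ref{thm:maintool}. The only point requiring a moment's thought is that the biorthogonality of $(\hat N_j^*)$ with $(\hat N_j)$ survives the $p$-renormalization, which it does by construction since the rescaling factors $\hat\nu_j^{\pm 1/p}$ cancel pairwise.
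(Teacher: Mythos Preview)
Your two arguments both establish the \emph{same} inequality, namely the lower bound
\[
\Big(\sum_j |b_j|^p \hat\nu_j^{1-p}\Big)^{1/p} \lesssim \|h\|_p,
\]
and the reverse inequality $\|h\|_p \lesssim \big(\sum_j |b_j|^p\hat\nu_j^{1-p}\big)^{1/p}$ is never addressed. Indeed, your ``upper estimate'' follows the paper's non-periodic duality trick verbatim, and that trick (as the paper itself makes explicit in the line after \eqref{eq:duallpstab_aux}) ends with $\big(\sum_j a_j^{p'}\nu_j^{1-p'}\big)^{1/p'}\lesssim\|\sum_j a_j N_j^*\|_{p'}$, i.e.\ a lower bound on the dual norm. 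Your ``lower estimate'' then re-proves this same direction by testing against a single $g=\sum c_j\hat N_j$.

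The missing direction is exactly where the paper invokes Proposition~\ref{prop:geom_decay_periodic}: writing $\hat N_j^*=\sum_i \hat a_{ij}\hat N_i$, applying primal stability, and summing the geometric decay of $\hat a_{ij}$. Your closing remark that ``nowhere do we need the periodic analogue of Theorem~\ref{thm:maintool}'' is therefore not quite right; the weak periodic decay of Proposition~\ref{prop:geom_decay_periodic} (which the paper does have) is precisely the ingredient used here. If you want to avoid it, there is a route via duality: for arbitrary $g\in L^{p'}$ write $\langle h,g\rangle=\langle h,\hat Pg\rangle=\sum_j b_j c_j$ with $\hat Pg=\sum_j c_j\hat N_j$, apply weighted H\"older, use Proposition~\ref{prop:periodicstability} on $\hat Pg$, and finish with Theorem~\ref{thm:boundedperiodic} to bound $\|\hat Pg\|_{p'}\lesssim\|g\|_{p'}$. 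But as written, your proof is incomplete.
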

\begin{proof}
 {We only prove the assertion for $p\in (1,\infty)$. The boundary cases follow
from obvious modifications of the proof.}
	By Propositions \ref{prop:geom_decay_periodic},
	\ref{prop:periodicstability}, and H\"older's inequality
	\begin{align*}
		\Big\| \sum_j a_j \nu_j^{1/p'}\hat N_j^* \Big\|_p^p &= \Big\|
		\sum_j a_j \nu_j^{1/p'} \sum_i \hat a_{ij} \hat N_i \Big\|_p^p \\
		&= \Big\| \sum_i \Big( \sum_j a_j\nu_j^{1/p'} \hat a_{ij}\Big)
		\hat N_i\Big\|_p^p \\
		&\leq \sum_i \Big|\sum_j a_j\nu_j^{1/p'} \hat a_{ij} \Big|^p
		\nu_i \\
		&\lesssim \sum_i \Big(\sum_j |a_j| \nu_j^{1/p'}\nu_i^{1/p} \frac{q^{\hat
		d(i,j)}}{\max(\nu_i,\nu_j)}\Big)^p \\
		&\leq \sum_i \Big( \sum_j |a_j| q^{\hat d(i,j)}\Big)^p \leq \sum_i \Big( \sum_j |a_j|^p q^{\hat d(i,j)\frac{p}2}\Big)\cdot  \Big( \sum_j q^{\hat d(i,j)\frac{p}{2(p-1)}}\Big)^{p-1} 
				 \\
		&\lesssim  \sum_i  \sum_j |a_j|^p q^{\hat d(i,j)\frac{p}2}  \lesssim
		\|a\|_p^p.
	\end{align*}
	Setting $b_j=a_j\nu_j^{1/p'}$ yields the first inequality of dual
	B-spline stability. The other inequality is proved similarly to the
	result for the non-periodic case in Proposition~\ref{prop:lpstab}.
\end{proof}

\subsection{Periodic orthonormal spline functions}
We now consider the same situation as for the non-periodic case: Let 
\begin{equation*}
	\hat{\mathcal T} = (0 \leq \sigma_0 \leq \sigma_1 \leq \cdots \leq
	\sigma_{i_0} \leq \cdots \leq
	\sigma_{n-2} \leq \sigma_{n-1} < 1)
\end{equation*}
be a partition of $\mathbb T$ canonically identified with $[0,1)$ 
and $\widetilde{\hat{\mathcal T}}$ be the same partition, but with $\sigma_{i_0}$
removed.
Similarly, we denote by $(\hat N_j)_{j=0}^{n-1}$ the periodic B-spline
function{s of order $k$} with respect to $\hat{\mathcal T}$ and by 
$(\tilde{\hat {N_j}})_{j=0}^{n-2}$
the periodic B-spline functions  {of order $k$} with respect to $\widetilde{\hat{\mathcal T}}$.
Here, we use the notation of periodic extension of the sequence
$(\sigma_j)_{j=0}^{n-1}$, i.e. $\sigma_{rn+j} = r+\sigma_j$ for $j\in\{0,\ldots,n-1\}$ and
$r\in\mathbb Z$ and in
the subindices of the B-spline functions, we take the indices modulo $n$.

In order to calculate the periodic orthonormal spline functions corresponding to
the above grids, we determine a function $\hat g\in \lin \{\hat N_i : 0\leq
i\leq n-1\}$ such that $\hat g\perp \tilde{\hat{N_j}}$ for all $0\leq j\leq n-2$.
That is, we assume that $\hat g$ is of the form 
\begin{equation*}
	\hat g = \sum_{j=0}^{n-1} \hat \alpha_j \hat N_j^*,
\end{equation*}
where $(\hat N_j^* : 0\leq j\leq n-1)$ is the system biorthogonal to the
functions $(\hat N_i: 0\leq i\leq n-1)$ and $\hat\alpha_j = \langle g,\hat
N_j\rangle$. In order for $\hat g$ to be orthogonal to
$\tilde{\hat{N_j}}$ for $0\leq j\leq n-2$, it has to satisfy the identities
\begin{equation*}
	0 = \langle\hat{g},\tilde{\hat{N_i}}\rangle = \sum_{j=0}^{n-1} \hat\alpha_j
	\langle \hat N_j^*, \tilde{\hat {N_i}}\rangle ,\qquad 0 \leq i\leq n-2.
\end{equation*}
We can look at the indices $j$ here periodically meaning that 
$\hat{\alpha}_j\neq 0$ only for $j\in\{i_0-k,\ldots,i_0\}$.
Observing that the formulas for B-splines are  local and thus, we are
able to use 
formula  \eqref{eq:boehm}
for $\tilde{\hat{N}}_i$, to get the recursion
formula
\begin{equation}
	\label{eq:rec-hat-alpha}
	\hat\alpha_{i+1} \frac{\sigma_{i+k+1} -
	\sigma_{i_0}}{\sigma_{i+k+1}-\sigma_{i+1}} + \hat
	\alpha_i \frac{\sigma_{i_0}-\sigma_i}{\sigma_{i+k}-\sigma_i}=0, \qquad i_0-k \leq i\leq i_0-1.
\end{equation}
With the starting value
\begin{equation*}
	\hat \alpha_{i_0-k} = \prod_{\ell=i_0-k+1}^{i_0-1}
	\frac{\sigma_{\ell+k}-\sigma_{i_0}}{\sigma_{\ell+k}-\sigma_\ell},
\end{equation*}
we get the explicit formula
\begin{equation}\label{eq:alphaperiodic}
	\hat\alpha_j = (-1)^{j-i_0+k} \Big( \prod_{\ell=i_0-k+1}^{j-1}
	\frac{\sigma_{i_0}-\sigma_\ell}{\sigma_{\ell+k} - \sigma_\ell}\Big) \cdot \Big(
	\prod_{\ell=j+1}^{i_0-1}
	\frac{\sigma_{\ell+k}-\sigma_{i_0}}{\sigma_{\ell+k}-\sigma_\ell}\Big),\qquad
	i_0-k\leq j\leq i_0.
\end{equation}

Now, similarly to Definition \ref{def:characteristic}, we are able to define
characteristic intervals for periodic grids as follows:
\begin{defin}[Characteristic interval for periodic
	sequences]\label{def:characteristic_periodic}
	Let $\hat{\mathcal T},\widetilde{\hat{\mathcal T}}$ be as above and
	$\sigma_{i_0}$ be the new point in $\hat{\mathcal T}$ that is not present in
	$\widetilde{\hat{\mathcal T}}$. Under the restriction $n\geq 2k$, we define the \emph{characteristic
		interval $\hat{J}$ corresponding to $\sigma_{i_0}$} as follows. 
\begin{enumerate}
\item 
Let 
\[
\Lambda^{(0)}:=\{i_0-k\leq j\leq i_0 : |[\sigma_j,\sigma_{j+k}]|\leq 2\min_{i_0-k\leq
\ell\leq i_0}|[\sigma_\ell,\sigma_{\ell+k}]| \}
\]
be the set of all indices $j$ in the vicinity of the index $i_0$ 
for which the corresponding support of the periodic B-spline function $\hat{N}_j$ is approximately minimal. Observe that $\Lambda^{(0)}$ is nonempty.
\item Define
\[
\Lambda^{(1)}:=\{j\in \Lambda^{(0)}: |\hat\alpha_j|=\max_{\ell\in \Lambda^{(0)}}
|\hat\alpha_\ell|\}.
\]
For an arbitrary, but fixed index $j^{(0)}\in \Lambda^{(1)}$, set $\hat
J^{(0)}:=[\sigma_{j^{(0)}},\sigma_{j^{(0)}+k}]$.
\item The interval $\hat J^{(0)}$ can now be written as the union of $k$ grid intervals
\[
\hat J^{(0)}=\bigcup_{\ell=0}^{k-1}[\sigma_{j^{(0)}+\ell},\sigma_{j^{(0)}+\ell+1}]\qquad\text{with }j^{(0)}\text{ as above}.
\]
Define the \emph{characteristic interval} $\hat J=\hat J(\sigma_{i_0})$ to be one of the above $k$ intervals that has maximal length.
\end{enumerate}
\end{defin}

\subsection{$L^p$ norms of $\hat g$}

\begin{prop}
	\label{prop:periodicpnorm}
	Let $n\geq 2k+2$. Then, 
\begin{equation*}
	\|\hat g\|_p \sim |\hat J|^{1/p-1},\qquad 1\leq p\leq \infty.
\end{equation*}
\end{prop}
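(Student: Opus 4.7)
The plan is to apply the periodic dual B-spline $L^p$-stability from the preceding proposition to reduce the claim to a finite-sum estimate involving the coefficients $\hat\alpha_j$, and then to inherit this estimate from the non-periodic case via Lemma~\ref{lem:orthsplineJinterval}.

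Concretely, applying the preceding proposition to $\hat g = \sum_{j=i_0-k}^{i_0}\hat\alpha_j \hat N_j^*$ (whose coefficients vanish outside the stated index range) produces
\[
\|\hat g\|_p \sim \bigl\|(\hat\alpha_j\cdot \nu_j^{1/p-1})_{j=i_0-k}^{i_0}\bigr\|_{\ell^p},\qquad \nu_j=|\supp\hat N_j|,
\]
so the problem reduces to evaluating the right-hand side. The key observation is that this expression is purely local: it depends only on the $2k+1$ consecutive knot values $\sigma_{i_0-k},\ldots,\sigma_{i_0+k}$ (with periodic extension when needed). Indeed, formula \eqref{eq:alphaperiodic} involves only these knots together with $\sigma_{i_0}$, the support lengths $\nu_j = \sigma_{j+k}-\sigma_j$ for $j\in[i_0-k,i_0]$ are determined by the same data, and Definition \ref{def:characteristic_periodic} of $\hat J$ is also built from exactly these quantities.

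Since formulas \eqref{eq:alpha2} and \eqref{eq:alphaperiodic}---and likewise Definitions \ref{def:characteristic} and \ref{def:characteristic_periodic}---are identical under the substitution $\tau\leftrightarrow\sigma$, I would fix a non-periodic admissible partition $\mathcal T$ as in \eqref{eq:part} whose interior knots match the window $\sigma_{i_0-k},\ldots,\sigma_{i_0+k}$ (by translating so that everything fits in $[0,1]$, shifting indices, and padding with boundary knots of multiplicity $k$). For this $\mathcal T$ one has $\alpha_j = \hat\alpha_j$ for $j\in[i_0-k,i_0]$, the $\nu_j$'s agree, and the characteristic intervals coincide, $J = \hat J$. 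Lemma \ref{lem:orthsplineJinterval} together with the non-periodic dual B-spline stability \eqref{eq:lpstabdual} then yields
\[
\bigl\|(\alpha_j\cdot \nu_j^{1/p-1})_{j=i_0-k}^{i_0}\bigr\|_{\ell^p}\sim \|g\|_p\sim |J|^{1/p-1},
\]
and combining this with the first display proves $\|\hat g\|_p\sim |\hat J|^{1/p-1}$.

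The main technical point is to make the local-to-global identification rigorous, namely to verify that admissibility and the definition of the characteristic interval transfer cleanly from the periodic setting to the constructed non-periodic partition. This is where the assumption $n\geq 2k+2$ enters: it ensures that the $2k+1$ relevant knots span an arc of length strictly less than $1$ on $\mathbb T$, so that they can be embedded into the standard $[0,1]$ framework without rescaling conflicts. Once this locality check is carried out, the reduction to Lemma \ref{lem:orthsplineJinterval} is automatic.
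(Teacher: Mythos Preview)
Your proposal is correct and follows essentially the same route as the paper: both arguments apply the periodic dual B-spline stability to reduce to the finite sum $\|(\hat\alpha_j\nu_j^{1/p-1})_{j=i_0-k}^{i_0}\|_{\ell^p}$, observe that this quantity coincides with its non-periodic counterpart because $\alpha_j=\hat\alpha_j$ and the supports agree on the relevant window, and then invoke Lemma~\ref{lem:orthsplineJinterval}. The only cosmetic difference is how the auxiliary non-periodic partition is built: the paper cyclically relabels the full periodic sequence so that $i_0=\lfloor n/2\rfloor$ and then pads with boundary knots at $0$ and $1$ (so that $i_0\geq k$ and $i_0\leq n-k-1$ follow directly from $n\geq 2k+2$), whereas you extract only the $2k{+}1$-knot window and embed it; both constructions achieve the same local matching.
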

\begin{proof}
We are able to arrange the periodic point sequence 
$(\sigma_j)_{j=0}^{n-1}$ such that $\sigma_0>0$ and $i_0=\lfloor n/2\rfloor$. Corresponding to this
point sequence, we define a non-periodic point sequence
$(\tau_{j})_{j=-k}^{n+k-1}$ by $\tau_j=\sigma_j$ for $j\in\{0,\ldots,n-1\}$,
$\tau_{-k} = \cdots =  \tau_{-1}=0$ and $\tau_n = \cdots = \tau_{n+k-1} = 1$.
With this choice and the
assumption $n\geq 2k+2$,
the conditions $i_0\geq k$ and $i_0\leq n-k-1$ are satisfied. Therefore, by
comparing \eqref{eq:alpha2} to \eqref{eq:alphaperiodic}, we get
$\alpha_j = \hat\alpha_j$ for $i_0-k\leq j\leq i_0$, which yields
\begin{equation*}
	\hat g = \sum_{j=i_0-k}^{i_0} \hat\alpha_j \hat N_j^*, \qquad g =
	\sum_{j=i_0-k}^{i_0} \hat\alpha_j N_j^*.
\end{equation*}
Also, comparing the two definitions of $J$ and $\hat J$, in the present case we
see that $|J|= |\hat J|$ and thus, we use B-spline stability to get
\begin{equation*}
	\| \hat g\|_p^p \sim \sum_{j=i_0-k}^{i_0} |\hat
	\alpha_j|^p |\supp N_j|^{1-p} \sim \|g\|_p^p \sim |\hat J|^{p-1}.
\end{equation*}
where the last equivalence follows from Lemma \ref{lem:orthsplineJinterval}.
\end{proof}

\begin{lem}\label{lem:fhatbi}
	Let $n\geq 2k+2$.
	If we write $\hat g= \sum_{i=0}^{n-1} \hat w_i \hat N_i$, we can estimate the
	coefficients $\hat w_i$ by
	\begin{equation*}
		|\hat w_i|\lesssim
		q^{\hat{d}(i,i_0)}\max_{i_0-k\leq j\leq i_0}
		\frac{1}{\max(|\supp \hat N_i|,|\supp \hat N_j|)}
	\end{equation*}
	where we take the index $j$ to be modulo $n$ and $\hat d$ is the
	periodic distance function on $\{0,\ldots,n-1\}$.
\end{lem}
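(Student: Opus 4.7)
The plan is to express the coefficients $\hat w_i$ as entries of the inverse Gram matrix applied to the vector $(\hat\alpha_j)$ and then invoke the geometric decay estimate from Proposition \ref{prop:geom_decay_periodic}. First, I would observe that if one writes each dual B-spline in the B-spline basis as $\hat N_j^* = \sum_i c_{ij}\hat N_i$, then the biorthogonality $\langle \hat N_i,\hat N_j^*\rangle = \delta_{ij}$ forces $c_{ij}$ to be the $(j,i)$-entry of the inverse of the Gram matrix $(\langle \hat N_i,\hat N_k\rangle)_{i,k}$. On the other hand, expanding $\hat a_{ij}=\langle\hat N_i^*,\hat N_j^*\rangle$ using this expansion and biorthogonality gives $\hat a_{ij} = c_{ji}$, and since the Gram inverse is symmetric one has $\hat N_j^* = \sum_i \hat a_{ij}\hat N_i$. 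Combined with $\hat g = \sum_{j=i_0-k}^{i_0} \hat\alpha_j \hat N_j^*$, this yields
\begin{equation*}
\hat w_i = \sum_{j=i_0-k}^{i_0} \hat\alpha_j\, \hat a_{ij},
\end{equation*}
with indices taken modulo $n$.

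Next, I would bound each $|\hat\alpha_j|$ crudely by $1$. Inspecting the explicit formula \eqref{eq:alphaperiodic}, every factor
\begin{equation*}
\frac{\sigma_{i_0}-\sigma_\ell}{\sigma_{\ell+k}-\sigma_\ell},\qquad \frac{\sigma_{\ell+k}-\sigma_{i_0}}{\sigma_{\ell+k}-\sigma_\ell}
\end{equation*}
lies in $[0,1]$ since $\sigma_\ell \leq \sigma_{i_0}\leq \sigma_{\ell+k}$ for the indices $\ell$ appearing in the products (this ordering is precisely why the recursion \eqref{eq:rec-hat-alpha} has a well-defined starting value). Thus $|\hat\alpha_j|\leq 1$ for every $i_0-k\leq j\leq i_0$.

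Finally, I would apply Proposition \ref{prop:geom_decay_periodic} to each $\hat a_{ij}$ and use the periodic triangle inequality. Since for $i_0-k \leq j\leq i_0$ we have $\hat d(j,i_0)\leq k$, it follows that $\hat d(i,j)\geq \hat d(i,i_0)-k$, so $q^{\hat d(i,j)}\leq q^{-k}q^{\hat d(i,i_0)}$. Summing the resulting $k+1$ terms gives
\begin{equation*}
|\hat w_i| \leq \sum_{j=i_0-k}^{i_0} \frac{Cq^{\hat d(i,j)}}{\max(|\supp \hat N_i|,|\supp \hat N_j|)} \lesssim q^{\hat d(i,i_0)} \max_{i_0-k\leq j\leq i_0} \frac{1}{\max(|\supp \hat N_i|,|\supp \hat N_j|)},
\end{equation*}
where in the last step I used that the maximum over $j$ of $1/\max(|\supp\hat N_i|,|\supp\hat N_j|)$ dominates the average (up to a factor of $k+1$, which is absorbed into $\lesssim$). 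This is the claimed inequality. There is no genuine obstacle here beyond the index bookkeeping modulo $n$; the heart of the argument is simply that Proposition \ref{prop:geom_decay_periodic} plus the trivial bound $|\hat\alpha_j|\leq 1$ are enough.
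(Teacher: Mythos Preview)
Your proof is correct and follows essentially the same approach as the paper: express $\hat w_i = \sum_{j=i_0-k}^{i_0}\hat\alpha_j\hat a_{ij}$, use $|\hat\alpha_j|\leq 1$ from the explicit formula~\eqref{eq:alphaperiodic}, and apply Proposition~\ref{prop:geom_decay_periodic}. The paper's version is more terse and omits the justification of the identity $\hat w_i = \sum_j \hat\alpha_j\hat a_{ij}$ and the triangle inequality step for $\hat d$, but the argument is the same.
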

\begin{proof}
	By looking at formula \eqref{eq:alphaperiodic}, we see that 
	$	|\hat\alpha_j | \leq 1$ for all $j$ and 
	therefore, by Proposition \ref{prop:geom_decay_periodic},
	\begin{equation*}
		|w_i|  =  \Big| \sum_{j=i_0-k}^{i_0} \hat\alpha_j \hat a_{ij}
		\Big|
		\lesssim \sum_{j=i_0-k}^{i_0} |\hat a_{ij}|\lesssim 
		\sum_{j=i_0-k}^{i_0} \frac{q^{\hat d(i,j)}}{\max(|\supp \hat N_i|,|\supp
	\hat N_j|)}.
	\end{equation*}
	This readily implies the assertion.
\end{proof}

	\begin{prop}\label{prop:lower_Lp_estimate_fhat}
		There exists an index $N(k)$ that depends only  on $k$ such that
		for all partitions $\hat {\mathcal T}$ with $n\geq N(k)$, we have
		\begin{equation*}
			\|\hat g\|_{L^p(\hat J)} \gtrsim |\hat J|^{1/p-1},\qquad
			p\in[1,\infty].
		\end{equation*}
	\end{prop}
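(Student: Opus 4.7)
The plan is to compare $\hat g$ with its non-periodic counterpart and reduce to Lemma~\ref{lem:orthsplineJinterval}. Following the setup from the proof of Proposition~\ref{prop:periodicpnorm}, reorder the sequence so that $\sigma_0 > 0$ and $i_0 = \lfloor n/2 \rfloor$, and form the non-periodic partition $\mathcal T = (\tau_j)_{j=-k}^{n+k-1}$ with $\tau_j = \sigma_j$ for $0 \le j \le n-1$ and multiple boundary knots at $0$ and $1$. Then $\hat\alpha_j = \alpha_j$ for $i_0-k \le j \le i_0$, the characteristic intervals coincide ($\hat J = J$), and the non-periodic function $g = \sum_{j=i_0-k}^{i_0}\hat\alpha_j N_j^*$ satisfies $\|g\|_{L^p(J)} \sim |J|^{1/p-1}$ by Lemma~\ref{lem:orthsplineJinterval}. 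The proposition will follow via the triangle inequality once we establish the perturbation bound
\[
\|\hat g - g\|_{L^p(\hat J)} \le c_k\, q_0^{\,n}\, |\hat J|^{1/p-1}
\]
for some $q_0 \in (0,1)$ and $c_k>0$ depending only on $k$, by choosing $N(k)$ large enough that $c_k q_0^{N(k)}$ is absorbed into the implicit constant from Lemma~\ref{lem:orthsplineJinterval}.

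Since $\hat J$ lies deep in the interior of $[0,1]$ under this ordering, every index $i$ with $\supp N_i \cap \hat J \ne \emptyset$ sits at distance $O(k)$ from $i_0 \approx n/2$, and for such $i$ the periodic and non-periodic B-splines agree on a neighborhood of $\hat J$. Expanding the dual bases in B-splines gives, for $x \in \hat J$,
\[
\hat g(x) - g(x) = \sum_{j=i_0-k}^{i_0}\hat\alpha_j \sum_{i :\, N_i(x)\ne 0} (\hat a_{ij} - a_{ij})\, N_i(x),
\]
with $(a_{ij})$ and $(\hat a_{ij})$ denoting the entries of the inverse Gram matrices of $(N_i)$ and $(\hat N_i)$. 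Since $|\hat\alpha_j|\le 1$ by \eqref{eq:alphaperiodic} and only $O(k)$ terms appear, the task reduces to showing $|\hat a_{ij} - a_{ij}| \lesssim q_0^{\,n}/|\hat J|$ for the relevant central indices $i,j$.

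The main obstacle is this estimate on the difference of the inverse Gram matrices. Split both index sets into a central block $C$, chosen so that $\hat N_i = N_i$ for $i\in C$ and such that $C$ contains $\{i_0-k,\ldots,i_0\}$ together with every index whose B-spline support meets $\hat J$, and a boundary block $B$; then $A_{CC} = \hat A_{CC}$. The Schur complement identity combined with the resolvent formula $X^{-1} - Y^{-1} = X^{-1}(Y-X)Y^{-1}$ yields
\[
(A^{-1})_{CC} - (\hat A^{-1})_{CC} = (A^{-1})_{CC}\,(P - \hat P)\,(\hat A^{-1})_{CC},
\]
where $P = A_{CB}A_{BB}^{-1}A_{BC}$ and $\hat P = \hat A_{CB}\hat A_{BB}^{-1}\hat A_{BC}$ both vanish outside a neighborhood of $B$ of width $k$. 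Since the central $i,j$ of interest lie at distance $\sim n/2$ from this neighborhood, the geometric decay of $(A^{-1})_{ik}$ given by Theorem~\ref{thm:maintool} and of $(\hat A^{-1})_{\ell j}$ given by Proposition~\ref{prop:geom_decay_periodic} each contribute a factor $q^{\,n/2}$, their product $q^{\,n}$ being the source of the smallness in $n$. The factor $|\hat J|^{-1}$ emerges from the denominators in those decay bounds at central entries. The technical challenge is to carry out the bookkeeping in this Schur-complement calculation so as to track the exponential decay against the length scales appearing in $A_{BB}^{-1}$, which may have rather large entries when the partition is irregular.
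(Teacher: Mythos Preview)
Your overall strategy---compare $\hat g$ to the non-periodic $g$ on $\hat J=J$ and absorb the difference for large $n$---matches the paper exactly. The divergence is in how you bound $u=\hat g-g$ on $\hat J$, and here the paper takes a shorter route that avoids the Schur-complement bookkeeping you flag as unresolved.

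Rather than estimate $\hat a_{ij}-a_{ij}$ entrywise, the paper expands $u$ in the \emph{non-periodic dual} basis: $u=\sum_{j=-k}^{n-1}\beta_j N_j^*$ with $\beta_j=\langle u,N_j\rangle$. For $j\notin B$ one has $N_j=\hat N_j$, so $\beta_j=\langle g,N_j\rangle-\langle\hat g,\hat N_j\rangle=\alpha_j-\hat\alpha_j=0$; thus $u$ is supported (in the dual basis) on the $2k$ boundary indices. For $j\in B$ one has $\alpha_j=0$, so $\beta_j=-\langle\hat g,N_j\rangle=-\sum_i\hat w_i\langle\hat N_i,N_j\rangle$, and Lemma~\ref{lem:fhatbi} immediately gives $|\beta_j|\lesssim q^{n/2}$. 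Finally, for $x\in\hat J$,
\[
|u(x)|=\Big|\sum_{j\in B}\beta_j\sum_{i:\hat J\subset\supp N_i}a_{ij}N_i(x)\Big|
\lesssim q^{n/2}\max_{i,j}\frac{q^{|i-j|}}{h_{ij}}\lesssim \frac{q^{n}}{|\hat J|},
\]
using only the non-periodic estimate of Theorem~\ref{thm:maintool} and $h_{ij}\ge|\hat J|$. The $L^p$ bound then follows from Corollary~\ref{cor:remez}.

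So the paper never needs $\hat a_{ij}-a_{ij}$: one factor of $q^{n/2}$ comes from the periodic decay in Lemma~\ref{lem:fhatbi} (hidden in $\beta_j$), the other from the non-periodic decay in $a_{ij}$, and the length scale $|\hat J|^{-1}$ drops out of $h_{ij}$. Your Schur-complement argument should be completable---the scaling heuristic you sketch is correct---but it forces you to control $A_{BB}^{-1}$ and $\hat A_{BB}^{-1}$ against the coupling blocks, and the two Gram matrices have different sizes ($n+k$ versus $n$), so the block comparison needs more care than you indicate. The dual-basis trick sidesteps all of that.
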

	\begin{proof}
		Assuming again that $i_0 = \lfloor n/2\rfloor$ { and $n\geq
		2k+2$},
		we begin by considering the difference between the periodic
		function $\hat g$ to the non-periodic function $g$ corresponding
		to the partition 
$\mathcal T=(\tau_{j})_{j=-k}^{n+k-1}$ with $\tau_j=\sigma_j$ for $j\in\{0,\ldots,n-1\}$,
$\tau_{-k} = \cdots =  \tau_{-1}=0$ and $\tau_n = \cdots = \tau_{n+k-1} = 1$:
\begin{equation*}
	u := g-\hat g  = \sum_{j=-k}^{n-1} \beta_j
	N_j^*,
\end{equation*}
where the coefficients $\beta_j$ are chosen so that this equation is true. This
is possible since both $g$ and $\hat g$ are contained in the linear span of the functions
$(N_j^*)$.
By defining  the  set of boundary indices $B$ in $\mathcal T$ by 
\begin{equation*}
	B = \{-k,\ldots,-1\} \cup \{n-k,\ldots,n-1\}\subset \{-k,\ldots,n-1\},
\end{equation*}
we see that for $j\in B^c$,
\begin{equation*}
	\beta_j =\langle u,
N_j\rangle = \langle g-\hat g, N_j\rangle =\langle g, N_j\rangle - 
	\langle \hat g, \hat N_j\rangle =
	\alpha_j - \hat\alpha_j=0,
\end{equation*}
where the last equation follows from the fact that $\alpha_j=\hat\alpha_j$ for
all indices $j$ in our current definition of $\mathcal T$.
Therefore, the function $u = g-\hat g$ can be expressed
as
\begin{equation}
	u = \sum_{j\in B} \beta_j N_j^*.
\end{equation}
Now, we estimate the coefficients $\beta_j$ for {$j\in B$} by Lemma  \ref{lem:fhatbi}:
\begin{align*}
	|\beta_j| &= |\langle g-\hat g, N_j\rangle| =  
	|\langle \hat g, N_j\rangle|
	\\
	&= \Big|\sum_{i=0}^{n-1} \hat w_i \langle \hat N_i, N_j\rangle\Big| \lesssim 
	 \sum_{i=0}^{n-1} |\hat w_i| \cdot |\supp \hat N_i \cap \supp
	N_j| \\
	& \lesssim  \sum_{i=0}^{n-1}  q^{\hat{d}(i,i_0)}\max_{m=i_0-k}^{i_0}
		\frac{1}{\max(|\supp \hat N_i|,|\supp \hat N_m|)} \cdot|\supp \hat N_i \cap \supp
	N_j| \\
	&\leq  \sum_{i: |\supp \hat N_i \cap \supp N_j|>0 }
	q^{\hat d(i,i_0)} 
\end{align*}
and, since $j\in B=\{-k,\ldots,-1\}\cup \{n-k,\ldots, n-1\}$,
\begin{equation}\label{eq:beta}
	|\beta_j| \lesssim  q^{\hat d(0,i_0)}\lesssim  q^{n/2}
, \qquad j\in B.
\end{equation}
So, we estimate for $x\in \hat J$:
	\begin{align*}
		|u(x)| &= \Big|\sum_{j\in B} \beta_j N_j^*(x)\Big| = \Big|\sum_{j\in B}
		\beta_j \sum_{i=-k}^{n-1} a_{ij} N_i(x)\Big| \\
		& = \Big|\sum_{j\in B} \beta_j \sum_{i: \hat J\subset \supp N_i} a_{ij}
		N_i(x)\Big| \\
		&\lesssim\sum_{j\in B} |\beta_j| \max_{i:\hat J\subset \supp N_i} |a_{ij}|.
	\end{align*}
	So, by \eqref{eq:beta} and the estimate in Theorem
	\ref{thm:maintool} for the
	\emph{non-periodic} matrix $(a_{ij})$ 
	\begin{align*}
		|u(x)| \lesssim q^{n/2}
		\max_{i:\hat J\subset\supp N_i} \max_{j\in B}
		\frac{q^{|i-j|}}{h_{ij}},
	\end{align*}
	where $h_{ij} = |\operatorname{conv} ( \supp N_i \cup \supp N_j)|$.
	Since $\hat J\subset \supp N_i$ for the above indices $i$, we have
	$h_{ij}\geq |\hat J|=|J|$
	and therefore,
	\begin{equation*}
		|u(x)| = |(g-\hat g)(x)| \lesssim q^n |J|^{-1}.
	\end{equation*}
	This means that on $J$, we can estimate $\hat g$ from below: let $x\in J$ be a point
	such that $|g(x)| \geq  \| g \|_{L^\infty(J)}/2$, then $|g(x)| \gtrsim
	|J|^{-1}$ by Lemma \ref{lem:orthsplineJinterval} and we get
	\begin{align*}
		|\hat g(x)| &= |g(x) - (g(x) -  \hat g(x)) | \\ 
		&\geq |g(x)| - |g(x) - \hat g(x)|  \\
		&\geq C_1|J|^{-1} - C_2 |J|^{-1} q^n,
	\end{align*}
	where $C_1$ and $C_2$ are constants that only depend on $k$ and $q<1$.
	So there exists an index $N(k)$ such that for all $n\geq N(k)$  
	\begin{equation*}
		\| \hat g \|_{L^\infty(\hat J)} \gtrsim  |\hat J|^{-1}.
	\end{equation*}
	 Since $\hat g$ is a polynomial on
	$\hat J$, by Corollary \ref{cor:remez} we now get for any $p\in[1,\infty]$
	\[
		\| \hat g \|_{L^p(\hat J)} \gtrsim |\hat J|^{1/p - 1},
	\]
	 which is the assertion.
\end{proof}
\subsection{More estimates for $\hat g$}
We now change our point of view slightly and compare the function $\hat g$
with a non-periodic function $g$ where we shift the sequence
$\hat {\mathcal T}=(\sigma_j)_{j=0}^{n-1}$ in such a way that we split in the middle of a largest grid
point interval:
\[
	\sigma_0 = 1-\sigma_{n-1} = \frac{1}{2}\max_{0\leq j\leq n-1}
	\text{{$(\sigma_{j} -
	\sigma_{j-1})$}},
\]
and, like before, choose $\mathcal T=(\tau_j)_{j=-k}^{n+k-1}$ such that $\tau_j = \sigma_j$
for $j\in \{0,\ldots, n-1\}$ so that we have
\[
\tau_0 - \tau_{-1} =
\tau_n-\tau_{n-1}=\frac{1}{2}\max_{0\leq j\leq n-1} (\sigma_j -
\sigma_{j-1}).
\] 
We refer to this choice of $\mathcal T$ as the \emph{maximal splitting} of
$\hat{\mathcal T}$.
Similar to above, we define $\widetilde{\hat{\mathcal T}}$ and
$\widetilde{\mathcal T}$ to be the partitions $\hat{\mathcal T}$ and $\mathcal
T$ respectively, with the grid points $\sigma_{i_0}$ and $\tau_{i_0}$ removed.

If we work under this assumption, it is not necessarily the case that $|J| =
|\hat J|$ {as there is the possibility that $J$ lies near $\tau_0$ or
$\tau_n$} , but we have 
\begin{prop}\label{prop:Jcompare}
	Let $J$ be the characteristic interval corresponding to the point
	sequences $(\mathcal T,\widetilde{\mathcal T})$ and $\hat J$ be the
	periodic one corresponding to $(\hat {\mathcal T},
	\widetilde{\hat{\mathcal T}})$ with the above maximal splitting. 
	Then 
	\begin{equation*}
		|J| \sim |\hat J|.
	\end{equation*}
\end{prop}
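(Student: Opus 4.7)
The plan is to reduce the comparison of $|J|$ and $|\hat J|$ to a comparison of the two minimal B-spline supports
\[
\nu := \min_{i_0-k\leq j\leq i_0} |\supp N_j|,\qquad
\nu^* := \min_{i_0-k\leq j\leq i_0} |\supp \hat N_j|.
\]
Since the chosen index $j^{(0)}$ in Definitions \ref{def:characteristic} and \ref{def:characteristic_periodic} lies in the respective $\Lambda^{(0)}$, the lengths of $[\tau_{j^{(0)}},\tau_{j^{(0)}+k}]$ and $[\sigma_{j^{(0)}},\sigma_{j^{(0)}+k}]$ lie in $[\nu,2\nu]$ and $[\nu^*,2\nu^*]$ respectively. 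Because the characteristic interval is the longest of the $k$ grid subintervals of $J^{(0)}$ (resp.\ $\hat J^{(0)}$), one has $|J|\sim\nu$ and $|\hat J|\sim \nu^*$, so it suffices to prove $\nu\sim\nu^*$.

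The easy direction $\nu\leq \nu^*$ will follow from $|\supp N_j|\leq|\supp \hat N_j|$, valid for every individual index $j\in\{i_0-k,\ldots,i_0\}$: away from the boundary the two supports coincide, while at boundary/wrap-around indices the periodic support equals the non-periodic support together with an additional wrap-around piece. For the converse $\nu^*\lesssim\nu$ I would split on the location of $i_0$. When $i_0\in\{k,\ldots,n-k-1\}$, the full range $\{i_0-k,\ldots,i_0\}$ sits in $\{0,\ldots,n-k-1\}$ and every B-spline support coincides, giving $\nu=\nu^*$ outright. The substantive case is therefore $i_0\in\{0,\ldots,k-1\}$; the case $i_0\in\{n-k,\ldots,n-1\}$ is handled by the same argument reflected across $\tfrac12$.

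For the boundary case I would further distinguish where $\nu$ is attained. If $\nu=|\supp N_{j^*}|$ for an interior index $j^*\in\{0,\ldots,i_0\}$, then $\nu^*\leq|\supp\hat N_{j^*}|=|\supp N_{j^*}|=\nu$ and we are done. Otherwise $\nu=|\supp N_{j^*}|=\sigma_{j^*+k}$ for some $j^*\in\{i_0-k,\ldots,-1\}$, and since $j^*+k\geq i_0\geq 0$ one has $\nu\geq\sigma_0$. Now invoking the maximal-splitting hypothesis $\sigma_0=\tfrac12\max_j(\sigma_j-\sigma_{j-1})$, every gap is bounded by $2\sigma_0$, and using the interior witness $j=0\in\{i_0-k,\ldots,i_0\}$ I estimate
\[
\nu^*\leq|\supp\hat N_0|=|\supp N_0|=\sigma_k-\sigma_0=\sum_{\ell=1}^{k}(\sigma_\ell-\sigma_{\ell-1})\leq 2k\sigma_0\leq 2k\,\nu.
\]
The hard point is precisely this last chain: the maximal-splitting hypothesis is needed to control every gap by $\sigma_0$, and $\sigma_0$ in turn serves as the natural lower bound for boundary supports. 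Without it, the non-periodic boundary support $\sigma_{k-m}$ could be arbitrarily small while the corresponding periodic support carries a large wrap-around contribution, making $\nu^*/\nu$ unbounded.
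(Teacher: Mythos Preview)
Your proof is correct and follows essentially the same strategy as the paper: both arguments reduce $|J|\sim|\hat J|$ to the comparison of $\nu=\min_{i_0-k\le j\le i_0}|\supp N_j|$ with $\nu^*=\min_{i_0-k\le j\le i_0}|\supp\hat N_j|$, get $\nu\le\nu^*$ from $|\supp N_j|\le|\supp\hat N_j|$, and for the converse use the maximal-splitting hypothesis to bound any periodic support by $2k\sigma_0\le 2k\,|\supp N_{j_0}|$ whenever the non-periodic minimizer $j_0$ lies at the boundary. The only cosmetic difference is that the paper argues directly on the minimizing index $j_0$ (distinguishing $j_0\in B$ versus $j_0\notin B$), whereas you first split on the location of $i_0$ and then pick the explicit interior witness $j=0$; the content is the same.
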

\begin{proof}
	The definition of the characteristic intervals $J$ (Definition
	\ref{def:characteristic}) and $\hat J$ (Definition~\ref{def:characteristic_periodic}) yield
	that 
	\begin{equation}
		\label{eq:J-J-hat}
		|J| \sim \min_{i_0-k\leq j\leq i_0} |\supp N_j|,\qquad |\hat
		J| \sim \min_{i_0-k\leq j\leq i_0} |\supp \hat N_j|,
	\end{equation}
	where the periodic indices are interpreted in the sense of  the usual
	periodic continuation of the subindices.
	Then, the very definition of the point sequence
	$\mathcal T$ in terms of the point sequence $\hat{\mathcal T}$ implies
	\begin{equation*}
		|\supp N_j| \leq |\supp \hat N_j|,\qquad -k\leq j\leq n-1,
	\end{equation*}
	so, in combination with \eqref{eq:J-J-hat},  we get the first inequality
	$	|J|\lesssim|\hat J|$.
	In order to show the converse inequality, we show
	\begin{equation}
		\label{eq:N-periodic-non-periodic}
		\min_{i_0-k\leq j\leq i_0} |\supp \hat N_j| \lesssim
		\min_{i_0-k\leq j\leq i_0} |\supp N_j|.
	\end{equation}
	We assume that $j_0$ is an index such that 
		$|\supp N_{j_0}| = \min_{i_0-k\leq j\leq i_0} |\supp N_j|$.
	If $j_0\notin B=\{-k,\ldots,-1\} \cup \{n-k,\ldots,n-1\}$, we even have 
	$	|\supp N_{j_0}| = |\supp \hat N_{j_0}|$.
	If $j_0\in B$, we have due to the choice of the  maximal
	splitting
	\begin{equation*}
		|\supp N_{j_0}| \geq \frac{1}{2}\max_{0\leq j\leq n-1}
		(\sigma_{j+1}-\sigma_j)
		\geq \frac{1}{2k} |\supp \hat N_{j}| 
	\end{equation*}
	for all indices $j$. So, in particular,
	\eqref{eq:N-periodic-non-periodic} holds. Thus we have shown the converse inequality
	$	|\hat J|\lesssim |J|$
	as well and the proof is complete.
\end{proof}
We also have the following relation between the dual B-spline coefficients
of $g$ and $\hat g$:
\begin{prop}
	\label{prop:alphacompare}
	For the maximal splitting, there exists a constant $c\sim 1$ such that
	for all $j\notin B$,
	\begin{equation*}
		\alpha_j = c\cdot\hat\alpha_j.
	\end{equation*}
\end{prop}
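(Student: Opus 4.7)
The plan is to compare the explicit formulas \eqref{eq:alpha2} and \eqref{eq:alphaperiodic} term by term. Because $\tau_\ell=\sigma_\ell$ for $0\leq\ell\leq n-1$ while $\tau_\ell=0$ for $\ell<0$ and $\tau_\ell=1$ for $\ell\geq n$, whereas the periodic extension puts $\sigma_\ell=\sigma_{\ell+n}-1<0$ for $\ell<0$ and $\sigma_\ell=\sigma_{\ell-n}+1>1$ for $\ell\geq n$, the factors in the two products differ only at the ``boundary'' indices
\[
L:=\{i_0-k+1,\dots,-1\}\quad\text{and}\quad R:=\{n-k,\dots,i_0-1\},
\]
both of which can be non-empty only when $i_0$ is close to one of the endpoints of $\{0,\dots,n-1\}$. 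Moreover, in the first product in \eqref{eq:alpha2} one has $\ell+k\leq n-2$ whenever $j\leq n-k-1$, and in the second product $\ell\geq 1$ whenever $j\geq 0$, so only $L$-indices can disrupt the first product and only $R$-indices can disrupt the second.

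Fix now $j\notin B$, i.e.\ $0\leq j\leq n-k-1$. Then every $\ell\in L$ satisfies $\ell<0\leq j$, forcing it into the first-product range of both $\alpha_j$ and $\hat\alpha_j$; every $\ell\in R$ satisfies $\ell\geq n-k>j$, forcing it into the second-product range of both. In particular $j\notin L\cup R$, so the ranges in which the boundary indices appear do not depend on $j$, while the non-boundary factors cancel in the quotient (as do the common signs $(-1)^{j-i_0+k}$). This yields
\[
\frac{\alpha_j}{\hat\alpha_j}=\prod_{\ell\in L}\frac{\sigma_{i_0}(\sigma_{\ell+k}-\sigma_\ell)}{\sigma_{\ell+k}(\sigma_{i_0}-\sigma_\ell)}\cdot\prod_{\ell\in R}\frac{(1-\sigma_{i_0})(\sigma_{\ell+k}-\sigma_\ell)}{(1-\sigma_\ell)(\sigma_{\ell+k}-\sigma_{i_0})}\,=:c,
\]
independent of $j$. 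Degenerate cases where a factor vanishes are unproblematic since such vanishing involves only interior $\sigma$-values and affects both sides identically.

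It remains to verify $c\sim 1$. Let $M$ be the largest gap of the periodic sequence $(\sigma_j)$; by the maximal splitting one has $\sigma_0=1-\sigma_{n-1}=M/2$, and every other consecutive gap is at most $M$. For $\ell\in L$ the indices $\ell+k\in\{i_0+1,\dots,k-1\}$ and $\ell+n\in\{i_0-k+1+n,\dots,n-1\}$ lie within $k-1$ grid steps of $0$ or $n-1$; thus each of $\sigma_{i_0},\,\sigma_{\ell+k},\,1-\sigma_{\ell+n},\,\sigma_{\ell+k}-\sigma_\ell,\,\sigma_{i_0}-\sigma_\ell$ is at least $M/2$ (from the maximal-splitting identity) and at most a constant multiple of $M$ (as a sum of at most $O(k)$ gaps each bounded by $M$). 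The analogous statement holds on $R$ by symmetry. Each boundary-factor ratio is therefore bounded above and below by constants depending only on $k$, and since $|L\cup R|\leq 2(k-1)$ the product $c$ itself satisfies $c\sim 1$.

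The main obstacle is the last step: one must carefully verify that every $\sigma$-distance appearing in the boundary factors is comparable to $M$. This relies crucially on the maximal-splitting normalization $\sigma_0=1-\sigma_{n-1}=M/2$, which both supplies the lower bound $M/2$ and (together with the uniform bound $M$ on consecutive gaps) yields the upper bound $O_k(M)$; without this normalization the boundary $\sigma$-values could be arbitrarily smaller than $M$ and $c$ could degenerate.
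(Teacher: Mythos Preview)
Your proof is correct and follows essentially the same approach as the paper. Both arguments compare the explicit formulas \eqref{eq:alpha2} and \eqref{eq:alphaperiodic} and use the maximal-splitting identity $\sigma_0=1-\sigma_{n-1}=M/2$ to show that every factor in which $\tau$'s differ from $\sigma$'s is bounded above and below by constants depending only on $k$. The only organizational difference is that the paper obtains the \emph{constancy} of the ratio from the matching recursions \eqref{eq:recalpha} and \eqref{eq:rec-hat-alpha} for $\{j,j+1\}\subset B^c$, and then bounds the ratio via the product formula; you instead extract both constancy and the bound directly from the product formula by observing that the differing factors lie in the fixed index sets $L$ and $R$, which for $j\notin B$ always fall into the first and second product respectively. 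Your route has the small bonus of yielding an explicit closed expression for $c$.
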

\begin{proof}
Comparing the recursion formulas \eqref{eq:recalpha} for $\alpha_j$ and
\eqref{eq:rec-hat-alpha} for $\hat{\alpha}_j$, we
	see that for $j\in \{i_0-k,\ldots, i_0-1\}$,
	\begin{equation}
		\label{eq:alphacomparison1}
		\frac{\hat{\alpha}_{j+1}}{\hat{\alpha}_j} =
		\frac{\alpha_{j+1}}{\alpha_j},\qquad \{j,j+1\}\subset B^c
	\end{equation}
	since by definition $\tau_i=\sigma_i$ for $0\leq i\leq n-1$.
	So, now take an arbitrary $j\in B^c$. Looking at the formulas for
	$\alpha_j$ and $\hat{\alpha}_j$ we write
	\begin{align*}
	\frac{\hat\alpha_j}{\alpha_j}&=    \Big(\prod_{\ell=i_0-k+1}^{j-1}
	\frac{\sigma_{i_0}-\sigma_\ell}{\tau_{i_0}-\tau_{\ell}}
\Big)\Big(\prod_{\ell=i_0-k+1}^{j-1}	
	\frac{\tau_{\ell+k}-
	\tau_{\ell}}{\sigma_{\ell+k} - \sigma_\ell}\Big) \cdot  
	\Big(\prod_{\ell=j+1}^{i_0-1}
	\frac{\sigma_{\ell+k}-\sigma_{i_0}}{\tau_{\ell+k}-\tau_{i_0}}
	\Big)\Big(\prod_{\ell=j+1}^{i_0-1}
	\frac{\tau_{\ell+k}-\tau_{\ell}}{\sigma_{\ell+k}-\sigma_\ell}\Big) .
	\end{align*}
Note that for every $s,t\in\{i_0-k+1,\ldots,i_0+k-1\}$ such that $0<s-t\leq k$  either  
$\sigma_s-\sigma_t=\tau_s-\tau_t $ or  $\sigma_s-\sigma_t>\tau_s-\tau_t $,
and the latter can only happen when $[\tau_{-1},\tau_0]$ or {$[$}$\tau_{n-1},\tau_n]$ 
is a subset of $[\tau_{t},\tau_s]$, so
$$\sigma_{s} - \sigma_t \geq \tau_{s}-\tau_t \geq
		\frac{1}{2}\max_{0\leq j\leq n-1}
		(\sigma_{j+1}-\sigma_j) \geq \frac{1}{2k}(\sigma_{s} -
		\sigma_t).$$
Hence we obtain $\sigma_s-\sigma_t \sim\tau_s-\tau_t.$
Therefore $\alpha_j \sim \hat{\alpha}_j$.  
	This, in combination with \eqref{eq:alphacomparison1} proves the proposition.
\end{proof}

\begin{prop}\label{prop:convinterval}
		Let $x\in [\sigma_\ell,\sigma_{\ell+1}]$. Then, there exists
		 an interval 
		$C=C(x)\subset \mathbb T$ which is minimal under the
		inclusion relation
		with
		\begin{equation*}
			\hat J\cup [\sigma_\ell, \sigma_{\ell+1}] \subset  C
		\end{equation*}
		such that if $K(C)$ is the number of grid points of 
		$\hat{\mathcal T}$ contained in $C$,
		\begin{equation*}
			|\hat g(x)| \lesssim \frac{\hat q^{K(C)}}{|C|},
		\end{equation*}
		where $\hat q\in (0,1)$ depends only on $k$.
	\end{prop}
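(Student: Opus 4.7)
My plan is to strengthen the pointwise estimate implicit in Lemma~\ref{lem:fhatbi}---whose denominator $\max(|\supp\hat N_i|,|\hat J|)$ is generally smaller than the desired $|C|$---by comparing $\hat g$ with the non-periodic orthogonal function $g$ from the maximal splitting of $\hat{\mathcal T}$ and invoking the B-spline coefficient bound \eqref{eq:wj} from Lemma~\ref{lem:orthsplineJinterval}, which already produces a $|C|$-type denominator in the non-periodic setting.

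I first fix the maximal splitting $\mathcal T$ of $\hat{\mathcal T}$ (see the discussion preceding Proposition~\ref{prop:Jcompare}) and let $g$ be the corresponding non-periodic orthogonal spline function. Propositions~\ref{prop:Jcompare} and~\ref{prop:alphacompare} give $|J|\sim|\hat J|$ and $\alpha_j=c\hat\alpha_j$ for $j\in B^c=\{0,\dots,n-k-1\}$, with $c\sim 1$ and $B=\{-k,\dots,-1\}\cup\{n-k,\dots,n-1\}$. I introduce the correction $u:=c\hat g-g$, view it as an element of $\mathcal S_{\mathcal T}$, and expand $u=\sum_j\beta_j N_j^*$. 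Since $N_j=\hat N_j$ on $[0,1]$ for $j\in B^c$, the relation $\beta_j=\langle u,N_j\rangle=c\hat\alpha_j-\alpha_j=0$ holds for such $j$; hence $u=\sum_{j\in B}\beta_j N_j^*$ is a pure boundary correction. The coefficients $|\beta_j|$ for $j\in B$ are then controlled by applying Lemma~\ref{lem:fhatbi} to bound $|\hat g|$ pointwise on $\supp N_j$, which gives exponential decay in $\hat d(j,i_0)$, since $\supp N_j$ sits near the cut and far from $\sigma_{i_0}$ in the generic maximal-splitting regime.

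Writing $\hat g(x)=c^{-1}(g(x)+u(x))$, I estimate the two pieces separately. For $|g(x)|$ I use \eqref{eq:wj}: for each $i$ with $x\in\supp N_i$ one has $|J|+\dist(\supp N_i,J)+\nu_i\gtrsim|C^{\mathrm{np}}|$ and $d_{\mathcal T}(\tau_i)\geq K(C^{\mathrm{np}})-O(k)$, where $C^{\mathrm{np}}:=\operatorname{conv}(J\cup[\tau_\ell,\tau_{\ell+1}])\subset[0,1]$; summing over the $O(k)$ relevant indices yields $|g(x)|\lesssim q^{K(C^{\mathrm{np}})}/|C^{\mathrm{np}}|$. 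For $|u(x)|$ I expand $N_j^*=\sum_i a_{ij}N_i$ and invoke Theorem~\ref{thm:maintool} to bound $|a_{ij}|$ by $q^{|i-j|}/|\operatorname{conv}(\supp N_i\cup\supp N_j)|$; together with the $|\beta_j|$ estimate this produces an exponentially decaying contribution whose denominator again involves a $|C|$-type geometric quantity.

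Finally I compare the non-periodic $C^{\mathrm{np}}\subset[0,1]$ with the periodic minimal arc $C\subset\mathbb T$ of the proposition. If the cut of the maximal splitting does not lie on $C$, then $C^{\mathrm{np}}=C$ as subintervals of $[0,1]$, so $|C^{\mathrm{np}}|=|C|$ and $K(C^{\mathrm{np}})=K(C)$; then the $g$-part directly yields the desired bound while $u(x)$ is negligible, its boundary indices $j\in B$ being far from both $\hat J$ and $x$. If the cut lies on $C$, then $C$ contains (most of) the maximal grid interval of $\hat{\mathcal T}$ and every $i$ with $x\in\supp\hat N_i$ must wrap across the cut, forcing $|\supp\hat N_i|$ to be at least the maximal grid-interval length, which in this regime is comparable to $|C|$; the direct estimate from Lemma~\ref{lem:fhatbi} then already delivers $|\hat g(x)|\lesssim q^{K(C)}/|C|$. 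Choosing $\hat q\in(q,1)$ slightly larger than $q$ absorbs the polynomial-in-$k$ constants that accumulate in the sums. The main obstacle is the uniform handling of this second case: the maximal splitting is precisely what forces the max grid interval inside $C$, bringing the denominator up to $|C|$; without the maximality this step would fail and the exponential decay alone would be insufficient to compensate.
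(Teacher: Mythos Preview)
Your overall strategy coincides with the paper's: pass to the maximal splitting, write $c\hat g-g=\sum_{j\in B}\beta_j N_j^*$ as a pure boundary correction, bound $|\beta_j|\lesssim q^{\hat d(0,i_0)}$, and then estimate $|g(x)|$ via \eqref{eq:wj} and $|u(x)|$ via Theorem~\ref{thm:maintool}. Up to this point your sketch is correct and essentially identical to the paper.

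The gap is in your Case~2 (cut lies on $C$). Both claims there fail:
(i) ``every $i$ with $x\in\supp\hat N_i$ must wrap across the cut'' holds only when $[\sigma_\ell,\sigma_{\ell+1}]$ lies within $k$ grid intervals of the maximal interval; in general $x$ can sit far from the cut while $\hat J$ is on the opposite side, so the minimal arc $C$ crosses the cut but $\supp\hat N_i$ does not;
(ii) ``the maximal grid-interval length is comparable to $|C|$'' gives only $|C|\gtrsim\max$; the reverse inequality is false (e.g.\ a nearly uniform grid with $K(C)\sim n/2$ has $|C|\sim 1/2$ but $\max\sim 1/n$).
Hence Lemma~\ref{lem:fhatbi} alone cannot produce the denominator $|C|$ here.

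The remedy is to keep using the $g$--$u$ decomposition in Case~2 rather than abandon it. From the maximal splitting one has $h_{ij}\geq\tfrac12\max_m(\sigma_m-\sigma_{m-1})$ for every $j\in B$, so your $u$-expansion actually yields the uniform estimate
\[
|u(x)|\lesssim \frac{q^{\hat d(i_0,\ell)}}{\max_m(\sigma_m-\sigma_{m-1})}.
\]
Now the elementary inequality $|C|\leq (K(C)+1)\max_m(\sigma_m-\sigma_{m-1})$ converts this into $|u(x)|\lesssim K(C)\,q^{K(C)}/|C|\lesssim\hat q^{K(C)}/|C|$ for any $\hat q\in(q,1)$, \emph{regardless} of whether the cut lies on $C$. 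For the $g$-term when the cut lies on $C$, the non-periodic hull $\operatorname{conv}(J\cup[\tau_\ell,\tau_{\ell+1}])$ is the \emph{longer} of the two arcs (the one avoiding the cut) and hence has length $\geq|C|$, while $d(i_0,\ell)\geq\hat d(i_0,\ell)\geq K(C)-O(k)$; so the $g$-term also contributes $\lesssim q^{K(C)}/|C|$. This uniform treatment of both terms is exactly how the paper concludes, without any case distinction.
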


	\begin{proof}
In order to estimate $\hat g$, we consider the difference   {$u:=g-c\cdot\hat g$} and $g$
separately, where $g$ is the orthogonal spline function corresponding to
$(\widetilde{\mathcal T},\mathcal
T)$ that arises from the maximal splitting {and $c\sim 1$ denotes the constant from
	Proposition \ref{prop:alphacompare}. We can write
\begin{equation*}
		u  = \sum_{j=-k}^{n-1} \beta_j N_j^*
\end{equation*}
for some coefficients $\beta_j$}. This
is possible since $g$ as well as $\hat g$ is contained in the linear span of
$(N_j^*)_{j=-k}^{n-1}$.
We calculate for $j\notin B= \{-k,\ldots,-1\} \cup \{n-k,\ldots,n-1\}$,
\begin{equation*}
	\beta_j =\langle g - c\cdot \hat g,
	N_j\rangle = \langle g, N_j\rangle - c\cdot
	\langle \hat g, \hat N_j\rangle =
	\alpha_j - c\cdot \hat{\alpha}_j=0,
\end{equation*}
where the last equality follows from Proposition \ref{prop:alphacompare}.
Therefore, the function $u = g-c\cdot \hat g$ can be expressed
as
\begin{equation}
	\label{eq:difference_nonperiodic_periodic}
	u = \sum_{j\in B} \beta_j N_j^*
\end{equation}
and its coefficients $\beta_j$ can be estimated by
\begin{align*}
	|\beta_j| &= |\langle g - c \hat g, N_j\rangle| \lesssim  |\langle g,
	N_j\rangle | + |\langle \hat g, N_j\rangle|
	\\
	&= \Big|\sum_{i=-k}^{n-1} w_i \langle N_i,N_j\rangle \Big| +
	\Big|\sum_{i=0}^{n-1} \hat  w_i \langle \hat N_i, N_j\rangle\Big| =:
	\Sigma_1 +
	\Sigma_2.
\end{align*}
Now, we estimate the term $\Sigma_1$ by using inequality \eqref{eq:wj} and the fact that
$j\in B$:
\begin{align*}
	\Sigma_1 &\leq \sum_{i=-k}^{n-1} |w_i| |\supp N_i \cap \supp N_j| \\
	&\lesssim \sum_{i=-k}^{n-1} \frac{q^{d_{\mathcal
	T}(\tau_i)}}{|J|+\dist(\supp N_i,J)+|\supp N_i|} |\supp N_i\cap\supp N_j| \\
	& \leq  \sum_{i: |\supp N_i\cap \supp N_j|>0} q^{d(i,i_0)}  \leq  q^{\hat d(0,i_0)}.
\end{align*}
The term $\Sigma_2$ is estimated similarly by using Lemma \ref{lem:fhatbi}:
\begin{align*}
	\Sigma_2 &\leq \sum_{i=0}^{n-1} |\hat w_i| \cdot |\supp \hat N_i \cap \supp
	N_j| \\
	& \lesssim  \sum_{i=0}^{n-1}  q^{\hat{d}(i,i_0)}\max_{i_0-k\leq m\leq
	i_0} \frac{1}{\max(|\supp \hat N_i|,|\supp \hat N_m|)} \cdot|\supp \hat N_i \cap \supp
	N_j| \\
	&\leq  \sum_{i: |\supp \hat N_i \cap \supp N_j|>0 }
	q^{\hat d(i,i_0)} \lesssim q^{\hat d(0,i_0)}.
\end{align*}
Combining the estimates for $\Sigma_1$ and $\Sigma_2$,
we get $|\beta_j|\lesssim  q^{\hat d(0,i_0)}$.
So, we continue to estimate $u(x)$ for $x\in [\tau_\ell, \tau_{\ell+1})$:
	\begin{align*}
		|u(x)| &= \Big|\sum_{j\in B} \beta_j N_j^*(x)\Big| = \Big|\sum_{j\in B}
		\beta_j \sum_{i=-k}^{n-1} a_{ij} N_i(x)\Big| \\
		& = \Big|\sum_{j\in B} \beta_j \sum_{i=\ell-k+1}^{\ell} a_{ij}
		N_i(x)\Big| \\
		&\leq \sum_{j\in B} |\beta_j| \max_{i=\ell-k+1}^\ell |a_{ij}|.
	\end{align*}
	So, by the above calculation and the estimate for the
	\emph{non-periodic} Gram matrix inverse  in Theorem
		\ref{thm:maintool},
	\begin{align*}
		|u(x)| \lesssim q^{\hat d(0,i_0)}
		\max_{i=\ell-k+1}^{\ell} \max_{j\in B}
		\frac{q^{|i-j|}}{h_{ij}},
	\end{align*}
	where $h_{ij} = |\operatorname{conv} ( \supp N_i \cup \supp N_j)|$.
	Since for $j\in B$, either $h_{ij}\geq \tau_0-\tau_{-1}$ or $h_{ij} \geq
	\tau_n-\tau_{n-1}$, we have by the defining property of the maximal
	splitting {that} $h_{ij}\geq \frac{1}{2}\max_{0\leq m\leq n-1}
	(\sigma_{m} - \sigma_{m-1})$, and therefore,
	\begin{equation}
		\label{eq:main}
		|u(x)| \lesssim  \frac{q^{\hat d(i_0,\ell)} }{\max_m
			(\sigma_m - \sigma_{m-1})},
	\end{equation}
	since we also have $\hat d(i_0,\ell) \leq \hat d(i_0,0) + \hat d(0,\ell)
	\leq \hat d(i_0,0) + 2k + \min_{j\in B, \ell-k+1\leq i\leq \ell} |i-j|.$ 
	Thus, we conclude  by Lemma \ref{lem:orthsplineJinterval} and
	\eqref{eq:main}
	\begin{equation*}
		|\hat g(x)| \leq c^{-1}|g(x)|  + |\hat g(x) - c^{-1}g(x)| 
		\lesssim 
		\frac{q^{d(i_0,\ell)}}{|\operatorname{conv}([\tau_\ell,\tau_{\ell+1}]\cup J)|} + 
		\frac{q^{\hat d(i_0,\ell)} }{\max_m ( \sigma_m -  \sigma_{m-1})},
	\end{equation*}
	which, with the use of Proposition \ref{prop:Jcompare} and the
	definitions of the characteristic intervals $J$ and $\hat J$,
	finishes the proof.  \end{proof}

	So, by defining the normalized orthonormal spline function $\hat f = \hat
	g/\|\hat g\|_2$, we immediately obtain 
	\begin{cor} \label{cor:geomdecay}
		Let $U$ be an arbitrary subset of $\mathbb T$. Then,
		\begin{equation*}
			\int_U |\hat f(x)|^p \dif x \lesssim |\hat J|^{p/2}
			\sum_{\ell : [ \sigma_\ell, \sigma_{\ell+1}]\cap U\neq \emptyset}
			\frac{\hat q^{pK(C( \sigma_\ell))}}{|C( \sigma_\ell)|^p} |U\cap
			[ \sigma_\ell, \sigma_{\ell+1}]|
		\end{equation*}
		where $\hat q\in(0,1)$ depends only on $k$.
	\end{cor}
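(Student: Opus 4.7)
The plan is to combine the pointwise estimate of Proposition~\ref{prop:convinterval} with the $L^2$-normalization provided by Proposition~\ref{prop:periodicpnorm}, and then integrate over the partition of $\mathbb T$ into grid intervals.

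First I would observe that Proposition~\ref{prop:periodicpnorm} applied with $p=2$ gives $\|\hat g\|_2 \sim |\hat J|^{-1/2}$, so for every $x\in\mathbb T$
\begin{equation*}
	|\hat f(x)|^p = \frac{|\hat g(x)|^p}{\|\hat g\|_2^p} \sim |\hat J|^{p/2}\,|\hat g(x)|^p.
\end{equation*}

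Next I would note that the interval $C=C(x)$ from Proposition~\ref{prop:convinterval} is defined as the minimal (under inclusion) interval in $\mathbb T$ containing $\hat J\cup[\sigma_\ell,\sigma_{\ell+1}]$; this depends only on $\ell$ and not on the specific point $x\in[\sigma_\ell,\sigma_{\ell+1}]$. I would therefore write $C(\sigma_\ell)$ for this common interval, so that for every $x\in[\sigma_\ell,\sigma_{\ell+1}]$ we have by Proposition~\ref{prop:convinterval}
\begin{equation*}
	|\hat g(x)|^p \lesssim \frac{\hat q^{pK(C(\sigma_\ell))}}{|C(\sigma_\ell)|^p}.
\end{equation*}

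Finally I would split $\int_U |\hat f(x)|^p\dif x$ according to the partition of $\mathbb T$ into the grid intervals $[\sigma_\ell,\sigma_{\ell+1}]$, use the previous pointwise bound on each such interval that meets $U$, and pull the factor $|\hat J|^{p/2}$ out of the sum:
\begin{equation*}
	\int_U |\hat f(x)|^p\dif x = \sum_{\ell}\int_{U\cap[\sigma_\ell,\sigma_{\ell+1}]} |\hat f(x)|^p\dif x \lesssim |\hat J|^{p/2}\sum_{\ell:[\sigma_\ell,\sigma_{\ell+1}]\cap U\neq\emptyset}\frac{\hat q^{pK(C(\sigma_\ell))}}{|C(\sigma_\ell)|^p}\,\bigl|U\cap[\sigma_\ell,\sigma_{\ell+1}]\bigr|.
\end{equation*}

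The result really is a direct consequence of the preceding two propositions, so there is no substantial obstacle; the only point to verify carefully is that $C(x)$ indeed depends only on the grid interval containing $x$, which is immediate from its definition as the minimal interval containing $\hat J\cup[\sigma_\ell,\sigma_{\ell+1}]$.
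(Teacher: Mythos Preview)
Your proof is correct and is exactly the argument the paper has in mind: the paper simply states that the corollary follows immediately from the definition $\hat f=\hat g/\|\hat g\|_2$, leaving implicit the combination of Proposition~\ref{prop:convinterval} with the normalization $\|\hat g\|_2\sim|\hat J|^{-1/2}$ from Proposition~\ref{prop:periodicpnorm} and the partition into grid intervals. Your observation that $C(x)$ depends only on the grid interval containing $x$ is the only point worth making explicit, and you have done so.
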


	We will also need the pointwise estimate of the maximal spline projection
	operator by the Hardy-Littlewood maximal function in the periodic case, which is true in the
	non-periodic case by Theorem \ref{thm:maxbound}:
	\begin{thm}
		\label{thm:hardy-littlewood-periodic}
		Let $\hat P$ be the orthogonal projection onto  {$\hat{\mathcal
		S}_{\hat{\mathcal T}}$}. Then 
		\begin{equation*}
			|\hat P h(t)| \lesssim  \hat{\mathcal M} h(t),\qquad
			h\in L^1(\mathbb T),
		\end{equation*}
		where $\hat{\mathcal M} h(t) = \sup_{I\ni t}|I|^{-1}\int_I |h(y)|\dif
		y$ is the periodic Hardy-Littlewood maximal function operator
		and the $\sup$ is taken over all intervals $I\subset \mathbb T$
		containing the point $t$.
	\end{thm}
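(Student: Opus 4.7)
The approach is to compare the periodic projection to a non-periodic one via the maximal splitting from Section~\ref{sec:periodic}, so that the non-periodic Hardy--Littlewood bound of Theorem~\ref{thm:maxbound} can be applied. After shifting $\hat{\mathcal T}$ to have maximal splitting at the origin, let $\mathcal T$ denote the associated non-periodic partition on $[0,1]$ with $k$-fold knots at $0$ and $1$, and let $P\colon L^2[0,1]\to\mathcal S_{\mathcal T}$ be the non-periodic orthogonal projection. Periodic splines embed in $\mathcal S_{\mathcal T}$ (periodicity yields only $C^{k-2}$ across the boundary, weaker than the $k$-fold boundary knots of $\mathcal T$), and the $L^2(\mathbb T)$ and $L^2[0,1]$ inner products agree on periodic functions, so one obtains the orthogonal decomposition $\mathcal S_{\mathcal T}=\hat{\mathcal S}_{\hat{\mathcal T}}\oplus W$ with $\dim W=k$ and the pointwise identity
\begin{equation*}
  \hat Ph(t)=Ph(t)-P_Wh(t).
\end{equation*}
By Theorem~\ref{thm:maxbound}, $|Ph(t)|\lesssim\mathcal Mh(t)\leq\hat{\mathcal M}h(t)$, so it suffices to show $|P_Wh(t)|\lesssim\hat{\mathcal M}h(t)$.

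The key structural fact to establish is the pointwise bound
\begin{equation*}
  |w(t)|\lesssim\frac{q^{d_0(t)}+q^{d_1(t)}}{|I_B|^{1/2}}
\end{equation*}
for every $w\in W$ with $\|w\|_2=1$, where $d_0(t),d_1(t)$ denote the respective $\mathcal T$-grid distances from $t$ to $0$ and $1$, $|I_B|=\tau_0=1-\tau_{n-1}$ is the boundary grid-interval length, and $q\in(0,1)$ depends only on $k$. To prove this, expand $w=\sum_i c_i N_i^*$ in the non-periodic dual B-spline basis. Whenever $\supp\hat N_j$ does not wrap around, we have $\hat N_j=N_j$, so the orthogonality $\langle w,\hat N_j\rangle=0$ forces $c_j=0$ for all such ``interior'' indices; this confines the non-zero coefficients of $w$ to a boundary index set $B\subset\{-k,\dots,-1\}\cup\{n-k,\dots,n-1\}$. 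The displayed bound then follows by combining the $L^2$-normalization of $w$ via \eqref{eq:lpstabdual} with the pointwise decay of $N_j^*$ coming from Theorem~\ref{thm:maintool}.

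Given this bound, pick an orthonormal basis $\{w_1,\ldots,w_k\}$ of $W$ and write $P_Wh(t)=\sum_\ell\langle h,w_\ell\rangle w_\ell(t)$. Bounding $|\langle h,w_\ell\rangle|\leq\int|h||w_\ell|$ and expanding with the bound on $|w_\ell|$, the main task reduces to estimating the weighted integrals $\int_{\mathbb T} q^{d_a(s)}|h(s)|\dif s$ for $a\in\{0,1\}$. I decompose these sums over the grid intervals of $\mathcal T$. Two features make the bound go through: the maximal splitting ensures that every grid interval has length at most $2|I_B|$, and the \emph{periodic} maximal function $\hat{\mathcal M}$ admits intervals wrapping through $0\equiv 1$, so even the ``opposite boundary'' is efficiently reached by a short interval around $t$. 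Pairing the two $w_\ell$-factors in $w_\ell(t)\cdot w_\ell(s)$ with the matching boundary and absorbing the resulting polynomial factors in grid distance into the geometric decay $q^{d_a}$ then yields $|P_Wh(t)|\lesssim\hat{\mathcal M}h(t)$.

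The main obstacle is the structural pointwise bound on elements of $W$; while philosophically analogous to the estimates of Lemma~\ref{lem:orthsplineJinterval} and Proposition~\ref{prop:convinterval}, it must be carried out uniformly for the entire $k$-dimensional complement rather than for a single orthonormal spline. The expansion-in-dual-basis strategy above is the key point that turns the orthogonality conditions into a concrete localization of the coefficients to boundary indices, after which Theorem~\ref{thm:maintool} supplies the required exponential decay.
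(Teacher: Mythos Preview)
Your approach is correct and arrives at the same conclusion, but the organization differs from the paper's proof in an instructive way.

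Both proofs use the maximal splitting to pass to a non-periodic partition $\mathcal T$, write $\hat Ph=Ph+(\hat P-P)h$, invoke Theorem~\ref{thm:maxbound} for the term $Ph$, and then show that the correction term is expressible in the span of the boundary dual B-splines $N_j^*$ with $j\in B=\{-k,\dots,-1\}\cup\{n-k,\dots,n-1\}$, after which Theorem~\ref{thm:maintool} and the maximal-splitting lower bound $h_{ij}\ge |I_B|$ finish the job. The difference is in how the correction is analysed. The paper first localizes $h=\sum_\ell h_\ell$ to grid intervals, proves a local $L^1$ propagation estimate $\|\hat Ph_\ell\|_{L^1[\sigma_r,\sigma_{r+1}]}\lesssim q^{\hat d(r,\ell)}\|h_\ell\|_1$ (and its non-periodic analogue), and then for each $\ell$ computes the dual-B-spline coefficients of $(\hat P-P)h_\ell$ directly. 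You instead observe the clean algebraic identity $\hat P-P=-P_W$ with $W=\mathcal S_{\mathcal T}\ominus\hat{\mathcal S}_{\hat{\mathcal T}}$, $\dim W=k$, and reduce everything to a single uniform pointwise bound on normalized elements of $W$; the localization in $h$ is then replaced by the weighted-integral estimate at the end. Your route is somewhat more conceptual and avoids the separate local $L^1$ lemma; the paper's route is more explicit and makes the exponential gain $q^{\hat d(0,m)}$ in the correction visible as a byproduct. One minor remark: the sentence ``periodicity yields only $C^{k-2}$ across the boundary'' is not quite right (since $0$ is not a knot of $\hat{\mathcal T}$ after maximal splitting, periodic splines are in fact $C^\infty$ there on $\mathbb T$); what matters is simply that as functions on $[0,1]$ they lie in $\mathcal S_{\mathcal T}$ because the $k$-fold boundary knots impose no constraint, which you use correctly.
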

	\begin{proof}
		Let $h$ be such that $\supp h \subset
		[\sigma_\ell,\sigma_{\ell+1}]$ {for some
			$\ell\in\{0,\ldots,n-1\}$}. The
		first thing we show is that for any index $r$,
		\begin{equation*}
			\| \hat P h \|_{L^1[\sigma_r,\sigma_{r+1}]} \lesssim q^{\hat
			d(r,\ell)} \| h\|_{L^1}.
		\end{equation*}
		For this, we write in the case $t\in [\sigma_r,\sigma_{r+1}]$
		\begin{equation*}
			\hat P h(t) = \sum_{j: \supp \hat N_j\ni t} \sum_{i:\supp
				\hat N_i\supset [\sigma_\ell,\sigma_{\ell+1}]}
				\hat a_{ij} \langle h, \hat N_i\rangle
			\hat N_j(t).
		\end{equation*}
		After using Proposition \ref{prop:geom_decay_periodic} and a
		simple H\"{o}lder, this 
		is less than
		\begin{equation*}
			\|h\|_{L^1}\cdot \sum_{j: \supp \hat N_j\ni t} \sum_{i:\supp
				\hat N_i\supset [\sigma_\ell,\sigma_{\ell+1}]}
				\frac{q^{\hat d(i,j)}}{\max(|\supp \hat N_i|, |\supp
			\hat N_j|)}  \hat N_j(t).
		\end{equation*}
		Integrating this estimate over $[\sigma_r,\sigma_{r+1}]$, we get
		\begin{equation}
			\label{eq:localL1}
			\|\hat P h\|_{L^1[\sigma_r,\sigma_{r+1}]} \lesssim
			\|h\|_{L^1} q^{\hat d(\ell,r)}.
		\end{equation}
		The same can be proved for the non-periodic projection operator
		$P$, since here we can use the same estimates.

		Now, we take an arbitrary function $h$ and localize it by
		setting
		\begin{equation*}
			h_\ell = h\cdot \charfun_{[\sigma_\ell,\sigma_{\ell+1}]}.
		\end{equation*}
		We fix a point $t\in [\sigma_m,\sigma_{m+1}]$ and associate to $\hat
		P$ the non-periodic projection operator $P$ corresponding to
		the maximal splitting. Then
		\begin{equation}\label{eq:decompP}
			\hat P h(t) = P h(t) + \big(\hat Ph(t) - P h(t)\big).
		\end{equation}
		In order to show $\hat P h(t) \lesssim \hat {\mathcal M} h(t)$, we first
		recall that Theorem \ref{thm:maxbound} yields $|Ph(t)| \lesssim
		$ {$\mathcal M h(t)\leq$}
		$\hat{\mathcal M}h(t)$. For the
		second term $(\hat P - P)h$, we write
		\begin{equation*}
			(\hat P - P) h = \sum_{\ell=0}^{n-1} (\hat P -
			P) h_\ell
		\end{equation*}
		and prove an estimate for $g_\ell(t) := (\hat P - P)
		h_\ell$.
		Observe that 
		\begin{align*}
			g_\ell(t) = \sum_{i\in B} \langle g_\ell, N_i\rangle N_{i}^*(t)
			= \sum_{j=m-k+1}^m \sum_{i\in B} a_{ij}\langle g_\ell,
			N_i\rangle N_j(t),
		\end{align*}
		since the range of both $\hat P$ and $P$ is contained in the
		linear span of the functions $(N_i^*)$ {and $h_\ell - \hat P
		h_\ell$ and $h_\ell - Ph_\ell$ are both orthogonal to the 
	span of $N_i, i\notin B$}.
		Therefore, by using Theorem \ref{thm:maintool} for $a_{ij}$,
		\begin{equation*}
			|g_\ell(t)| \lesssim \sum_{j=m-k+1}^m \sum_{i\in B}
			\frac{q^{|i-j|}}{h_{ij}} \|g_\ell\|_{L^1(\supp N_i)}.
		\end{equation*}
		 {Consequently, by} \eqref{eq:localL1} and its non-periodic counterpart,
		\begin{equation}\label{eq:maxfun1}
			|g_\ell(t)| \lesssim \sum_{j=m-k+1}^m \sum_{i\in B}
			\frac{q^{|i-j|}}{h_{ij}} q^{\hat d(i,\ell)}
			\|h_\ell\|_{L^1}.
		\end{equation}
		Since we performed the maximal splitting for our periodic
		partition, we get that 
		\begin{equation*}
			h_{ij} \geq \frac{1}{2}\max_{\nu}(\sigma_\nu -
			\sigma_{\nu-1}),\qquad i\in B.
		\end{equation*}
		Denoting by $C_{\ell m}$ the convex set
		that contains $[\sigma_\ell,\sigma_{\ell+1}]\cup
		[\sigma_m,\sigma_{m+1}]$ containing  
		the least grid points, we get 
		\begin{equation*}
			h_{ij} \gtrsim \frac{|C_{\ell m}|}{\hat
			d(\ell,m)},\qquad i\in B.
		\end{equation*}
		Thus, we estimate \eqref{eq:maxfun1} by
		\begin{equation*}
			\sum_{j=m-k+1}^m \sum_{i\in B} q^{|i-j|+\hat
			d(i,\ell)}\hat d(\ell,m)
			\frac{\|h\|_{L^1[\sigma_\ell,\sigma_{\ell+1}]}}{|C_{m\ell}|}
			\lesssim \hat d(\ell,m) \max_{i\in B}(q^{|i-m| + \hat
			d(i,\ell)})\cdot\hat{\mathcal M}h(t)
		\end{equation*}
		{for all $t\in [\sigma_m,\sigma_{m+1}]$}.
		By the triangle inequality, $\hat d(\ell,m) \leq \hat d(i,m) +
		\hat d(i,\ell)\leq |i-m| + \hat d(i,\ell)$ and thus we can
		estimate further
		\begin{equation*}
			|g_\ell(t)|\lesssim \max_{i\in B} \alpha^{|i-m| + \hat d(i,\ell)}
			\hat{\mathcal M} h(t),
		\end{equation*}
		where $\alpha$ can be chosen as $q^{1/2}$.
		Summing this over $\ell$, we finally obtain
		\begin{equation*}
			|(\hat P - P) h(t)| \lesssim \alpha^{\hat d(0,m)}
			\hat{\mathcal{M}}h(t)	\leq \hat{\mathcal
			M}h(t),
		\end{equation*}
		which, in combination with \eqref{eq:decompP} and the result for
		$Ph(t)$ yields the assertion of the theorem.
	\end{proof}

\subsection{Combinatorics of characteristic intervals}\label{sec:comb_periodic}
Similarly to the non-periodic case we are able to analyze the combinatorics of
subsequent characteristic intervals.
Let   $(s_n)_{n=1}^\infty$ be an admissible sequence of points in $\mathbb T$
and  $(\hat f_n)_{n=1}^\infty$ be the corresponding periodic orthonormal spline functions of order $k$.
For $n\geq 1$, the  partitions $\hat{\mathcal T}_n$ associated to $\hat f_n$ are defined to
consist of the grid points $(s_j)_{j=1}^{n}$
and we enumerate them as 
\begin{align*}
	\mathcal {\hat T}_n=(0\leq \sigma_{n,0}\leq
\dots\leq\sigma_{n,n-1}<1).
\end{align*}

If $n\geq 2k$, we denote by $\hat  J_n^{(0)}$ and $\hat J_n$ the characteristic intervals
$\hat J^{(0)}$ and $\hat J$ from Definition \ref{def:characteristic_periodic} associated to the new
grid point $s_n$, which is defined to be the characteristic interval associated
to $(\hat{\mathcal T}_{n-1},\hat{\mathcal T}_n)$. 
For any $x\in \mathbb T$, let $C_n(x)$ be the {interval} from Proposition~\ref{prop:convinterval}
associated to $\hat J_n$.  We define $\hat d_n(x)$ to be
the number of grid points in $\hat{\mathcal T}_n$ between $x$ and $\hat J_n$
 contained in $C_n(x)$ counting $x$
and endpoints of $\hat J_n$. Moreover, for a subinterval $V$ of $\mathbb T$, we
denote 
$\hat d_n(V)=\min_{x\in V}\hat d_n(x)$. 

 An immediate consequence of the definition of $\hat J_n$ is that the
sequence of characteristic intervals $(\hat J_n)$ forms a nested collection of
sets, 
i.e., two sets in it are either disjoint or one is contained in the other.

Since the definition of the characteristic interval $\hat J_n$ only involves
local properties of the point sequence $(s_j)$ and the definition of $\hat J_n$
is the same as the definition of $J_n$ for any identification of $\mathbb T$
with $[0,1${$)$} that has the property that between the newly inserted point
$s_{i_0}$ and $0$ or $1$ are more than $k$ grid points of $\mathcal T_n$, we
also get the periodic version of Lemma \ref{lem:jinterval}.
\begin{lem}\label{lem:jinterval_periodic}
	Let $V$ be an arbitrary subinterval of $\mathbb T$ and let $\beta >0$. Then there exists
	a constant $F_{k,\beta}$ only depending on $k$ and $\beta$ such that
\[
\card\{n\geq 2k:\hat J_n\subseteq V, |\hat J_n|\geq\beta|V|\} \leq F_{k,\beta}.
\]
\end{lem}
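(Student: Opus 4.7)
The plan is to transfer the non-periodic Lemma~\ref{lem:jinterval} to the periodic setting using the locality observation stated in the paragraph immediately preceding the lemma: for any identification of $\mathbb T$ with $[0,1)$ whose cut is separated from the new point $s_n$ by more than $k$ grid points of $\mathcal T_n$ on each side, the periodic characteristic interval $\hat J_n$ coincides with the non-periodic $J_n$. The reason is that the construction of $\hat J_n$ and $\hat J_n^{(0)}$ involves only the B-splines of index $i_0-k$ through $i_0$, and thus only the $2k+1$ grid points of $\mathcal T_n$ closest to $s_n$.

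Given $V\subseteq \mathbb T$ and $\beta>0$, I would fix a cut point $c\in\mathbb T$ (chosen in the largest component of $\mathbb T\setminus V$ when $V\neq \mathbb T$, arbitrary otherwise), identify $\mathbb T$ with $[0,1)$ via $x\mapsto (x-c)\bmod 1$, and augment $(s_j)$ with boundary knots of multiplicity $k$ at $0$ and $1$ to obtain an admissible non-periodic sequence $(t_j)$. For any $n$ contributing to the count, the inclusions $\hat J_n\subseteq \hat J_n^{(0)}$ and $s_n\in \hat J_n^{(0)}$, together with $|\hat J_n^{(0)}|\leq k|\hat J_n|\leq k|V|$, place $s_n$ in a $k|V|$-neighborhood of $V$ and hence at positive distance from $c$. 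Whenever the grid-count condition holds (more than $k$ grid points of $\mathcal T_n$ between $s_n$ and each of $0$ and $1$), we obtain $\hat J_n=J_n$, and the non-periodic Lemma~\ref{lem:jinterval} applied to $(t_j)$ bounds the contribution of such indices by $F_{k,\beta}$.

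The main obstacle is handling the exceptional $n$ for which the grid-count condition fails under the fixed cut $c$. For each such $n$, I would switch to a cut close to the antipode of $s_n$: at most $2k$ indices of $\mathcal T_n$ can sit within $k$ grid positions of this antipode, so the condition becomes automatic in the new identification, $\hat J_n=J_n$ for that choice, and Lemma~\ref{lem:jinterval} again supplies a bound $F_{k,\beta}$. Combining the two contributions produces a uniform constant depending only on $k$ and $\beta$. An alternative, probably cleaner, route is to replay the proof of Lemma~\ref{lem:jinterval} verbatim with $\hat J_n$ in place of $J_n$, since that proof rests only on the laminar refinement structure of the grid intervals in $\hat{\mathcal T}_n$ and on the local dependence of the characteristic interval on the newly inserted point $s_n$, both of which are available in the periodic setting through the nestedness property recorded just before the lemma.
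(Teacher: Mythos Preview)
Your approach matches the paper's: no formal proof is given there beyond the locality observation in the paragraph preceding the lemma, and your ``alternative route'' of replaying the proof of Lemma~\ref{lem:jinterval} verbatim in the periodic setting is precisely what that remark intends, since the argument in \cite{Passenbrunner2014} uses only the nested structure of the characteristic intervals and the purely local dependence of $\hat J_n$ on the new grid point.

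One caution about your primary line of argument: switching the cut for each exceptional $n$ yields a \emph{different} non-periodic knot sequence each time, so a single application of Lemma~\ref{lem:jinterval} does not bound their total number --- that lemma counts indices for one fixed sequence $(t_j)$. To salvage the fixed-cut approach you would need an independent $O_{k,\beta}(1)$ bound on the number of exceptional indices contributing to the count, which is not immediate (the threshold beyond which the grid-count condition holds depends a priori on the particular sequence $(s_j)$, not just on $k$ and $\beta$). Your alternative avoids this issue entirely and suffices.
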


Additionally, Lemma \ref{lem:jinterval_periodic} has the following corollary:
\begin{cor}\label{cor:J_n decay}
	Let $(\hat J_{n_i})_{i=1}^\infty$ be a decreasing sequence of characteristic
	intervals{, i.e. $\hat J_{n_{i+1}}\subseteq \hat J_{n_i}$}. Then, there exists a number $\kappa\in(0,1)$ and  a constant
	$C_k$, both depending only on $k$ such that 
	\begin{equation*}
		|\hat J_{n_i}| \leq C_k \kappa^{i}|\hat J_{n_1}|,\qquad i\in\mathbb N.
	\end{equation*}
\end{cor}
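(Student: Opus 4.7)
\medskip

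The plan is to deduce this directly from Lemma \ref{lem:jinterval_periodic} by a pigeonhole argument over blocks of $N$ consecutive subsequence indices, where $N = F_{k,1/2}$ is the combinatorial constant.

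First I would set $N := F_{k,1/2}$ and claim the key step: for every index $i$ such that $n_i \geq 2k$, one has
\begin{equation*}
|\hat J_{n_{i+N}}| < \tfrac{1}{2}|\hat J_{n_i}|.
\end{equation*}
To see this, apply Lemma \ref{lem:jinterval_periodic} with the choice $V := \hat J_{n_i}$ and $\beta := 1/2$. By the nestedness hypothesis $\hat J_{n_{j+1}}\subseteq \hat J_{n_j}$, every $j\geq i$ satisfies $\hat J_{n_j}\subseteq \hat J_{n_i}$, so the number of indices $j\geq i$ with $|\hat J_{n_j}|\geq |\hat J_{n_i}|/2$ is bounded by $N$. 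Since $j=i$ is always one such index, this forces some $j\in\{i+1,\dots,i+N\}$ to satisfy $|\hat J_{n_j}|<|\hat J_{n_i}|/2$, and monotonicity of $(|\hat J_{n_j}|)_j$ then gives the claimed inequality at $j=i+N$.

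Iterating the one-step estimate yields $|\hat J_{n_{1+mN}}|\leq 2^{-m}|\hat J_{n_1}|$ for all $m\geq 0$ (at least once we are in the regime $n_i\geq 2k$; the finitely many initial indices with $n_i<2k$, if any, can only change the bound by an absolute factor that gets absorbed into $C_k$). For a general $i\geq 1$, writing $i-1 = mN + r$ with $0\leq r<N$ and using monotonicity once more,
\begin{equation*}
|\hat J_{n_i}| \leq |\hat J_{n_{1+mN}}| \leq 2^{-m}|\hat J_{n_1}| \leq 2 \cdot \big(2^{-1/N}\big)^{i-1}|\hat J_{n_1}|.
\end{equation*}
Setting $\kappa := 2^{-1/N}\in(0,1)$ and $C_k := 2/\kappa$ gives the desired bound $|\hat J_{n_i}|\leq C_k \kappa^i |\hat J_{n_1}|$, with both constants depending only on $k$.

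I do not expect a real obstacle here: the only subtle point is checking that the hypothesis $n\geq 2k$ required by Lemma \ref{lem:jinterval_periodic} is compatible with applying the lemma at every stage (which is fine, since the characteristic intervals $\hat J_n$ are only defined for $n\geq 2k$ to begin with), and that the pigeonhole step uses the strict inequality $|\hat J_n|\geq \beta|V|$ correctly so that the index $j=i$ is counted. Everything else is bookkeeping.
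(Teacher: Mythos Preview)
Your proof is correct and is precisely the argument the paper has in mind: the paper states the result as an immediate corollary of Lemma~\ref{lem:jinterval_periodic} without spelling out the details, and your pigeonhole-over-blocks-of-$F_{k,1/2}$ argument is the standard way to extract geometric decay from a uniform bound on the number of large nested intervals. The only implicit assumption you are using (and which the paper also tacitly uses) is that the indices $n_i$ are pairwise distinct, which is forced since the conclusion fails otherwise and is consistent with how the corollary is applied later in Lemma~\ref{lem:techn2}.
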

	\section{Technical estimates}\label{sec:techn}
The lemmas proved in this section are similar to the corresponding results in
\cite{GevKam2004} or \cite{Passenbrunner2014} and also the proofs are more or
less the same. The exception is Lemma  \ref{lem:techn2} for which we give a
renewed and shorter proof. \\

\begin{lem}\label{lem:techn1}
	Let $N(k)$ be the number given by Proposition
	\ref{prop:lower_Lp_estimate_fhat},
	$f=\sum_{n\geq N(k)}^\infty a_n \hat f_n$ and $V$ be a
 subinterval of
$\mathbb T$.
Then,
\begin{align}
\int_{V^c}\sum_{j\in\Gamma}|a_j \hat f_j(t)|\dif
t&\lesssim\int_V\Big(\sum_{j\in\Gamma}|a_j \hat f_j(t)|^2\Big)^{1/2}\dif t, \label{eq:lemtechn1:1} 
\end{align}
where
\[
\Gamma:=\{j : \hat J_j\subset  V\text{ and }N(k)\leq j<\infty\}.
\]
\end{lem}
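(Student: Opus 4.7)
Denote the square function by $S(t) := \bigl( \sum_{i \in \Gamma} |a_i \hat f_i(t)|^2 \bigr)^{1/2}$. The proof combines three ingredients: (i) the $L^1$-decay of $\hat f_j$ on $V^c$ coming from Corollary~\ref{cor:geomdecay}; (ii) the lower bound $\|\hat f_j\|_{L^1(\hat J_j)} \gtrsim |\hat J_j|^{1/2}$ supplied by Proposition~\ref{prop:lower_Lp_estimate_fhat}; and (iii) the nested structure of the family $\{\hat J_j : j \in \Gamma\}$ together with the geometric size-decay of Corollary~\ref{cor:J_n decay}.

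\emph{Step 1 (upper bound on $V^c$).} Fix $j \in \Gamma$, so $\hat J_j \subset V$. Any grid interval $[\sigma_{n_j,\ell}, \sigma_{n_j,\ell+1}]$ meeting $V^c$ is separated from $\hat J_j$ by at least one endpoint of $V$, hence the minimal interval $C(\sigma_{n_j,\ell})$ of Proposition~\ref{prop:convinterval} contains both $\hat J_j$ and that endpoint. Applying Corollary~\ref{cor:geomdecay} with $p=1$ and $U = V^c$ and summing the resulting geometric series in $\hat q^{K(C)}$ yields
\[
    \int_{V^c} |\hat f_j(t)| \dif t \lesssim |\hat J_j|^{1/2} \, \varepsilon_j,
\]
where $\varepsilon_j := \hat q^{d_j}$ and $d_j$ is the minimal number of grid points of $\hat{\mathcal T}_{n_j}$ separating $\hat J_j$ from $V^c$ along $V$.

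\emph{Step 2 (lower bound on $\hat J_j$).} Proposition~\ref{prop:lower_Lp_estimate_fhat} gives $\|\hat f_j\|_{L^1(\hat J_j)} \gtrsim |\hat J_j|^{1/2}$, and since $\hat J_j \subset V$,
\[
    |a_j| \, |\hat J_j|^{1/2} \lesssim \int_{\hat J_j} |a_j \hat f_j(t)| \dif t \leq \int_{\hat J_j} S(t) \dif t.
\]
Combined with Step~1,
\[
    \int_{V^c} |a_j \hat f_j(t)| \dif t \lesssim \varepsilon_j \int_{\hat J_j} S(t) \dif t.
\]

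\emph{Step 3 (summing over $\Gamma$).} Summing over $j \in \Gamma$ and interchanging the order of integration and summation,
\[
    \sum_{j \in \Gamma} \int_{V^c} |a_j \hat f_j(t)| \dif t \lesssim \int_V S(t) \, \Phi(t) \dif t, \qquad \Phi(t) := \sum_{j \in \Gamma,\; \hat J_j \ni t} \varepsilon_j.
\]
The lemma follows once we establish the pointwise bound $\Phi(t) \lesssim 1$ uniformly on $V$. For fixed $t \in V$, the characteristic intervals $\hat J_j$ containing $t$ form a nested decreasing chain (by the remark preceding Lemma~\ref{lem:jinterval_periodic}); Corollary~\ref{cor:J_n decay} implies their lengths decrease geometrically along the chain, while the exponent $d_j$ grows because each descent along the chain corresponds to the insertion of a grid point which, after sufficiently many steps, must lie in $V \setminus \hat J_j$. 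Pairing these two facts with Lemma~\ref{lem:jinterval_periodic}, which bounds the number of characteristic intervals of comparable scale contained in a given interval, reduces $\Phi(t)$ to a convergent geometric series with bound depending only on $k$.

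\emph{Main obstacle.} The delicate step is the uniform bound $\Phi(t) \lesssim 1$. It requires converting the purely combinatorial content of Corollary~\ref{cor:J_n decay} and Lemma~\ref{lem:jinterval_periodic} into a quantitative estimate on the analytic weights $\varepsilon_j = \hat q^{d_j}$---in particular, arguing that $d_j$ grows at a rate comparable to the chain depth as one descends the nested chain of characteristic intervals at $t$. The rest of the argument is a routine assembly of the decay estimate of Step~1, the lower bound of Step~2, and a Fubini-type interchange.
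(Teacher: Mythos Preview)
Your Steps~1 and~2 are fine and match the paper's opening moves. The gap is in Step~3: the pointwise bound $\Phi(t)\lesssim 1$ is simply false, and your heuristic that ``$d_j$ grows along the chain'' does not hold. Since $d_j=\hat d_j(V^c)$ is a \emph{minimum} over the two sides of $\hat J_j$, it can stay constant along an arbitrarily long nested chain. Concretely, take a decreasing chain $\hat J_{n_1}\supset \hat J_{n_2}\supset\cdots$ all sharing one endpoint $a\in\partial V$ (this is realizable: insert points only inside the current $\hat J$ or far away). Then $d_{n_i}$ is bounded by a fixed constant for every $i$, while for $t$ close to $a$ the number of $i$ with $t\in\hat J_{n_i}$ is of order $\log(|V|/|t-a|)$ by Corollary~\ref{cor:J_n decay}. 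Hence $\Phi(t)\to\infty$ as $t\to a$, and the inequality $\int_V S\,\Phi\lesssim\int_V S$ cannot be obtained this way.

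The paper repairs exactly this point by first passing (via Corollary~\ref{cor:remez}) from $\hat J_j$ to the shrunken middle interval $\hat J_j^\beta$ with $\beta=1/4$, so that Step~2 reads
\[
\int_{V^c}|a_j\hat f_j|\lesssim \hat q^{\hat d_j(V^c)}\int_{\hat J_j^\beta} S(t)\,\dif t.
\]
Then one groups $\Gamma$ into level sets $\Gamma_s=\{j\in\Gamma:\hat d_j(V^c)=s\}$. The key observation is that a nested chain inside $\Gamma_s$ can be split into at most two sub-chains, each of which shares a common endpoint (because equal $d$-value together with refining grids forces one endpoint of $\hat J_j$ to be frozen). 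For such a sub-chain the \emph{shrunken} intervals $\hat J_j^\beta$ stay a fixed proportion of $|\hat J_j|$ away from the common endpoint, so $t\in\hat J_j^\beta$ pins $|\hat J_j|$ to a bounded range, and Lemma~\ref{lem:jinterval_periodic} gives that each $t$ lies in at most $F_k$ of the $\hat J_j^\beta$, $j\in\Gamma_s$. Summing the bounded overlap over $j\in\Gamma_s$ and then the geometric series in $s$ finishes the proof. In short, the shrinkage $\hat J_j\rightsquigarrow\hat J_j^\beta$ is not cosmetic: it is exactly what converts the unbounded overlap of the $\hat J_j$'s into a bounded one.
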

\begin{proof}
First, assume that $|V|=1$. Then \eqref{eq:lemtechn1:1} holds trivially. In the following, we assume that $|V|<1$. 
Fixing $n\in\Gamma$, Corollary \ref{cor:geomdecay} and Proposition
\ref{prop:lower_Lp_estimate_fhat} then imply
\begin{equation}
\label{eq:techn1:1}
	\int_{V^c}|\hat f_n(t)|\dif t\lesssim \hat q^{\hat d_n(V^c)}|\hat
	J_n|^{1/2}  \lesssim \hat q^{\hat{d}_n(V^c)} \int_{\hat J_n} |\hat
	f_n(t)|\dif t.
\end{equation}
Now choose $\beta=1/4$ and let $\hat J_n^\beta$ be the unique closed interval that satisfies
\[
	|\hat J_n^\beta|=\beta|\hat J_n|\quad\text{and}\quad
	\operatorname{center}(\hat J_n^\beta)=\operatorname{center}( \hat J_n).
\]
Since $f_n$ is a polynomial of order $k$ on the interval $J_n$, we apply
{Corollary \ref{cor:remez}} to \eqref{eq:techn1:1} and estimate further
\begin{equation}\label{eq:techn1:2}
	\int_{V^c} |a_n \hat f_n(t)|\dif  t \lesssim \hat q^{\hat d_n(V^c)}
	\int_{\hat J_n^\beta} |a_n \hat f_n(t)|\dif t\leq \hat q^{\hat d_n(V^c)} 
	\int_{\hat J_n^\beta} \Big(\sum_{j\in\Gamma}|a_j\hat f_j(t)|^2\Big)^{1/2}\dif t.
\end{equation}
Define $\Gamma_s:=\{j\in\Gamma:\hat d_j(V^c)=s\}$ for $s\geq 0$. 
If $(\hat J_{n_j})_{j=1}^N$ is a decreasing sequence of characteristic intervals
with $n_j\in \Gamma_s$, we can split $(\hat J_{n_j})$ into at most two groups so that for each group 
 one endpoint of $\hat J_{n_j}$ coincides for each $j\in\Gamma$ since $\hat J_j\subset V$ for all $j\in
\Gamma$. 

So, Lemma \ref{lem:jinterval_periodic} implies that there exists a constant $F_{k}$ only depending on $k$, 
such that each point $t\in V$ belongs to at most $F_{k}$ intervals $\hat J_{j}^\beta$, $j\in\Gamma_s$.
Thus, summing over $j\in \Gamma_s$, we get from \eqref{eq:techn1:2}
\begin{equation*}
\begin{aligned}
	\sum_{j\in\Gamma_s}\int_{V^c} |a_j\hat f_j(t)|\dif t &\lesssim
	\sum_{j\in\Gamma_s}\hat q^s \int_{\hat J_{j}^\beta} 
	\Big(\sum_{\ell\in\Gamma}|a_\ell \hat f_\ell(t)|^2\Big)^{1/2} \dif t \\
&\lesssim \hat q^s\int_V\Big(\sum_{\ell \in\Gamma}|a_\ell \hat f_\ell(t)|^2\Big)^{1/2} \dif t.
\end{aligned}
\end{equation*}
Finally, we sum over $s\geq 0$ to obtain inequality \eqref{eq:lemtechn1:1}.
\end{proof}
Let $g$ be a real-valued function defined on the torus $\mathbb T$. In the
following, we denote by  $[g>\lambda]$ the  set $\{x\in \mathbb T: g(x)>\lambda\}$ for any number $\lambda>0$.
\begin{lem}\label{lem:triv}
Let  $f=\sum_{n=1}^\infty a_n \hat f_n$ with only finitely many nonzero coefficients $a_n$, $\lambda>0,\ r<1$ and 
\[
	E_\lambda=[Sf>\lambda],\quad
	B_{\lambda,r}=[\hat{\cM}\charfun_{E_\lambda}>r],
\]
where $Sf(t)^2 = \sum_{n=1}^\infty a_n^2\hat f_n(t)^2$ is the spline square function.
Then we have
\[
E_\lambda\subset B_{\lambda,r}.
\]
\end{lem}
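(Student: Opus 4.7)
The statement reduces to a straightforward observation about the Lebesgue density of $E_\lambda$, once one recognizes that this set is essentially open. My plan is as follows.

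First I would note that the hypothesis ``only finitely many nonzero coefficients $a_n$'' implies that $Sf = \bigl(\sum_n a_n^2 \hat f_n^2\bigr)^{1/2}$ is a piecewise real-analytic function on $\mathbb T$: on each of the finitely many subintervals determined by the union of knots appearing in the supports of the relevant $\hat f_n$, the function $Sf^2$ coincides with a polynomial. Consequently $E_\lambda = [Sf > \lambda]$ agrees, up to the finite (hence measure-zero) set of knot points, with an open subset of $\mathbb T$.

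Given this, the argument is immediate. Fix $x \in E_\lambda$ that is not one of these exceptional knot points. By openness there exists an open interval $I \subset \mathbb T$ containing $x$ with $I \subset E_\lambda$, so that
\[
\frac{1}{|I|}\int_I \charfun_{E_\lambda}(y)\dif y = 1 > r,
\]
and hence $\hat{\cM}\charfun_{E_\lambda}(x) > r$, i.e.\ $x \in B_{\lambda,r}$. Since the excluded set of knots has measure zero, this yields the desired inclusion $E_\lambda \subset B_{\lambda,r}$ (in the almost-everywhere sense that is needed for the subsequent $L^p$ arguments).

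If a strictly pointwise inclusion is preferred, one may instead invoke the Lebesgue density theorem: almost every point $x$ of $E_\lambda$ has density $1$ there, and hence admits intervals $I\ni x$ with $|I \cap E_\lambda|/|I|$ arbitrarily close to $1$, in particular exceeding $r$. Either way the only essential ingredient is the strict inequality $r < 1$, and I do not expect any real obstacle beyond the minor bookkeeping of handling the finite set of knot points.
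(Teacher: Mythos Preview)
Your argument is essentially the same as the paper's: find an interval $I\ni t$ with $I\subset E_\lambda$ and conclude $\hat{\cM}\charfun_{E_\lambda}(t)\geq 1>r$. The paper obtains the stated \emph{pointwise} inclusion by observing that $Sf$ is continuous except at finitely many knot points, and at each such point it is at least one-sidedly continuous; hence for every $t\in E_\lambda$ (including knots) there is a half-open interval $I\subset E_\lambda$ with $t\in I$. Your first argument gives the inclusion off the finite knot set, which indeed suffices for the later $L^p$ estimates; but note that your second paragraph is mislabeled: the Lebesgue density theorem again yields the inclusion only almost everywhere, not ``strictly pointwise''. If you want the literal set inclusion as stated, supply the one-sided continuity observation instead.
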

\begin{proof}
Let $t\in E_\lambda$ be fixed. The square function $Sf=\big(\sum_{n=1}^\infty
|a_n\hat f_n|^2\big)^{1/2}$ is continuous except possibly at finitely many grid
points, where $Sf$ is at least continuous from one side.  
As a consequence, for $t\in E_\lambda$, there exists an interval $I\subset E_\lambda$ such that $t\in I$. This implies the following estimate:
\begin{align*}
(\hat\cM\charfun_{E_\lambda})(t)&=\sup_{t\ni U}|U|^{-1}\int_U \charfun_{E_\lambda}(x)\dif x \\
&=\sup_{t\ni U}\frac{|E_\lambda\cap U|}{|U|}\geq \frac{|E_\lambda\cap I|}{|I|}=\frac{|I|}{|I|}=1>r.
\end{align*}
The above inequality shows $t\in B_{\lambda,r}$, proving the lemma.
\end{proof}

\begin{lem}\label{lem:Sf}
Let $f=\sum_{n\geq N(k)} a_n \hat f_n$ with only finitely many nonzero
coefficients $a_n$, $\lambda>0$ and $r<1$, {where $N(k)$ is the number given by
	Proposition \ref{prop:lower_Lp_estimate_fhat}.}
Then we define
\[
	E_\lambda:=[Sf>\lambda],\qquad B_{\lambda,r}:= [\hat{\mathcal
	M}\charfun_{E_\lambda}>r],
\]
where $Sf(t)^2=\sum_{n\geq N(k)} a_n^2 \hat f_n(t)^2$ is the spline square
function.
If 
\[
\Lambda=\{n:\hat J_n\not\subset  B_{\lambda,r}\text{ and }N(k)\leq
n<\infty\}\qquad\text{and}\qquad g=\sum_{n\in\Lambda} a_n\hat f_n,
\]
we have
\begin{equation}\label{eq:Sfid}
\int_{E_\lambda}Sg(t)^2\dif t\lesssim_r \int_{E_\lambda^c}Sg(t)^2\dif t.
\end{equation}

\end{lem}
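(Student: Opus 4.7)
The plan is to reduce \eqref{eq:Sfid} to a per-index estimate
\begin{equation*}
	\int_{E_\lambda}\hat f_n(t)^2\dif t\lesssim_r\int_{E_\lambda^c}\hat f_n(t)^2\dif t,\qquad n\in\Lambda,
\end{equation*}
which can then be multiplied by $a_n^2$ and summed over $n\in\Lambda$ (only finitely many terms are nonzero, so interchanging sum and integral is automatic) to yield the lemma. Since $\|\hat f_n\|_2=1$, the left side above is bounded by $1$, so everything boils down to establishing the uniform lower bound
\begin{equation*}
	\int_{E_\lambda^c\cap\hat J_n}\hat f_n(t)^2\dif t\gtrsim_{r,k} 1,\qquad n\in\Lambda.
\end{equation*}

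To prove this lower bound, first I observe that the defining property of $\Lambda$ forces $\hat J_n$ to be mostly outside $E_\lambda$. Indeed, $\hat J_n\not\subset B_{\lambda,r}$ means that there exists a point $x_0\in\hat J_n$ with $\hat{\mathcal M}\charfun_{E_\lambda}(x_0)\le r$; plugging the interval $\hat J_n$ itself into the supremum defining $\hat{\mathcal M}\charfun_{E_\lambda}(x_0)$ gives $|E_\lambda\cap\hat J_n|\le r|\hat J_n|$, and hence $|E_\lambda^c\cap\hat J_n|\ge(1-r)|\hat J_n|$.

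The second step uses that $\hat f_n$ is a polynomial of order $k$ on $\hat J_n$. Since $n\ge N(k)$, Proposition~\ref{prop:periodicpnorm} together with Proposition~\ref{prop:lower_Lp_estimate_fhat} gives $\|\hat f_n\|_{L^\infty(\hat J_n)}\sim|\hat J_n|^{-1/2}$. Applying Theorem~\ref{thm:remez} to the sub-level set $G_\eta:=\{x\in\hat J_n:|\hat f_n(x)|\le\eta\|\hat f_n\|_{L^\infty(\hat J_n)}\}$ yields $|G_\eta|\le 4|\hat J_n|\eta^{1/(k-1)}$. I choose $\eta=\eta(r,k)>0$ so that $4\eta^{1/(k-1)}\le(1-r)/2$; then the complement of $G_\eta$ in $\hat J_n$ has measure at least $(1+r)|\hat J_n|/2$, and intersecting with $E_\lambda^c\cap\hat J_n$ yields a subset of $\hat J_n$ of measure at least $(1-r)|\hat J_n|/2$ on which $|\hat f_n|\gtrsim_{r,k}|\hat J_n|^{-1/2}$. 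Integrating $\hat f_n^2$ over this subset produces the desired lower bound.

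Combining the steps, for every $n\in\Lambda$ one has $\int_{E_\lambda}\hat f_n^2\le 1\lesssim_{r,k}\int_{E_\lambda^c\cap\hat J_n}\hat f_n^2\le\int_{E_\lambda^c}\hat f_n^2$; multiplying by $a_n^2$ and summing over $\Lambda$ gives \eqref{eq:Sfid}. The main delicate point is the Remez-measure estimate for $G_\eta$: since $r$ is only assumed to satisfy $r<1$, the cruder Corollary~\ref{cor:remez} alone would only handle $r<1/2$, so the sharper version obtained by applying Theorem~\ref{thm:remez} directly to the sub-level set is essential, at the cost of a constant that degenerates as $r\to 1$, which is consistent with the $\lesssim_r$ in the statement.
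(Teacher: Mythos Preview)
Your proof is correct and follows the same underlying idea as the paper's: reduce to a per-index inequality and use Remez together with the fact that $\hat J_n\not\subset B_{\lambda,r}$ forces a positive proportion of $\hat J_n$ to lie in $E_\lambda^c$.

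The execution is slightly different, and in fact a bit cleaner. The paper first bounds $\int_{E_\lambda}|a_n\hat f_n|^2$ by $\int_{\hat J_n}|a_n\hat f_n|^2$ via Propositions~\ref{prop:periodicpnorm} and~\ref{prop:lower_Lp_estimate_fhat}, then splits $\hat J_n$ into $\hat J_n\cap E_\lambda^c$ and $\hat J_n\cap E_\lambda$, and for the latter decomposes $B_{\lambda,r}$ into its component intervals $V_j$, applying Remez on each piece $\hat J_n\cap V_j$. You bypass this decomposition entirely: from a single point $x_0\in\hat J_n\setminus B_{\lambda,r}$ you get $|E_\lambda\cap\hat J_n|\le r|\hat J_n|$ directly by testing the maximal function against $\hat J_n$, and then a single application of Remez on $\hat J_n$ finishes. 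Your observation that the cruder Corollary~\ref{cor:remez} would only handle $r<1/2$, while the quantitative Theorem~\ref{thm:remez} is needed for general $r<1$ (at the price of constants blowing up as $r\to 1$), is exactly right and applies equally to the paper's argument.
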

\begin{proof}
First, we observe that in the case $B_{\lambda,r}=\mathbb T$,
the index set $\Lambda$ is empty and thus, \eqref{eq:Sfid} holds trivially. 
So let us assume $B_{\lambda,r}\neq \mathbb T$.
Then, we  start the proof of \eqref{eq:Sfid} with an application of
 {Proposition}
\ref{prop:periodicpnorm} and Proposition \ref{prop:lower_Lp_estimate_fhat} to obtain
\[
\int_{E_\lambda} Sg(t)^2\dif t=\sum_{n\in\Lambda}\int_{E_\lambda}|a_n \hat f_n(t)|^2\dif t
\lesssim \sum_{n\in\Lambda} \int_{\hat J_n}|a_n\hat f_n(t)|^2\dif t.
\]
We split the latter expression into the parts
\[
I_1:=\sum_{n\in\Lambda} \int_{\hat J_n\cap E_\lambda^c}|a_n\hat f_n(t)|^2\dif t,\quad I_2:=
\sum_{n\in\Lambda} \int_{\hat J_n\cap E_\lambda}|a_n\hat f_n(t)|^2\dif t.
\]
For $I_1$, we clearly have
\begin{equation}\label{eq:lemSf:1}
I_1\leq \sum_{n\in\Lambda} \int_{E_\lambda^c}|a_n\hat f_n(t)|^2\dif t=\int_{E_\lambda^c} Sg(t)^2\dif t.
\end{equation}
It remains to estimate $I_2$. First we observe that by Lemma \ref{lem:triv}, $E_\lambda\subset B_{\lambda,r}$. 
Since the set $B_{\lambda,r}=[\hat{\mathcal M}\charfun_{E_\lambda}>r]$ is open
in $\mathbb T$, we decompose it into a countable collection of disjoint open subintervals $(V_j)_{j=1}^\infty$ 
of $\mathbb T$. Utilizing this decomposition, we estimate
\begin{equation}\label{eq:lemSf:2}
I_2\leq \sum_{n\in\Lambda}\sum_{j:|\hat J_n\cap V_j|>0} \int_{\hat J_n\cap
V_j}|a_n\hat f_n(t)|^2\dif t.
\end{equation}
If the indices $n$ and $j$ are such that $n\in\Lambda$ and $|\hat J_n\cap V_j|>0$,
then, by definition of $\Lambda$, $\hat J_n$ is an interval containing at least one endpoint 
$x$ of $V_j$ for which
\[
\hat\cM\charfun_{E_\lambda}(x)\leq r.
\]
This implies
\[
|E_\lambda\cap \hat J_n\cap V_j|\leq r\cdot |\hat J_n\cap V_j|\quad\text{or equivalently}\quad 
|E_\lambda^c\cap \hat J_n\cap V_j|\geq (1-r)\cdot |\hat J_n\cap V_j|.
\]
Using this inequality and that $|\hat f_n|^2$ is a polynomial of order $2k-1$ on
$\hat J_n$ allows us to use {Corollary \ref{cor:remez}}  to conclude from \eqref{eq:lemSf:2}
\begin{equation*}
\begin{aligned}
I_2&\lesssim_r \sum_{n\in\Lambda}\sum_{j:|\hat J_n\cap V_j|> 0}
\int_{E_\lambda^c\cap \hat J_n\cap V_j}|a_n\hat f_n(t)|^2\dif t \\
&\leq \sum_{n\in\Lambda} \int_{E_\lambda^c\cap \hat J_n\cap B_{\lambda,r}}|a_n\hat f_n(t)|^2\dif t \\
&\leq \sum_{n\in\Lambda} \int_{E_\lambda^c}|a_n\hat f_n(t)|^2\dif t=\int_{E_\lambda^c}Sg(t)^2\dif t.
\end{aligned}
\end{equation*}
The latter inequality combined with \eqref{eq:lemSf:1} completes the proof the lemma.
\end{proof}

\begin{lem}\label{lem:techn2}

Let $V$ be an open subinterval of $\mathbb T$  and
$f=\sum_{n} \hat a_n \hat f_n \in L^p(\mathbb T)$  for $p\in
(1,\infty)$ with $\supp f\subset V$. Then, there exists a number $R>1$ depending
only on $k$ such that 
\begin{equation}\label{eq:lemtechn2}
\sum_{n} R^{p \hat d_n(V)}|\hat a_n|^p \|\hat
f_n\|_{L^p(\widetilde{V}^c)}^p\lesssim_{p,R} \|f\|_p^p,
\end{equation}
with $\widetilde{V}$ being the interval with the same center as $V$ but with
three times the diameter.
\end{lem}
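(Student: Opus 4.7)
The plan is to use duality to reduce the coefficient estimate to an $L^{p'}$ bound for a single spline sum, and then to prove that bound using the pointwise estimates for periodic orthonormal splines developed in Section~\ref{sec:periodic}. Since $\supp f\subset V$ gives $\hat a_n = \int_V f\,\hat f_n\dif t$, standard $\ell^p$--$\ell^{p'}$ duality combined with H\"older's inequality on the $V$-integral reduces the lemma to the dual inequality
\begin{equation*}
\Big\|\sum_n b_n R^{\hat d_n(V)}\|\hat f_n\|_{L^p(\widetilde V^c)}\hat f_n\Big\|_{L^{p'}(V)} \lesssim_{p,R} \Big(\sum_n |b_n|^{p'}\Big)^{1/p'},
\end{equation*}
to be proved uniformly in $(b_n)$.

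To prove this dual estimate, I would estimate the function $h:=\sum_n b_n R^{\hat d_n(V)}\|\hat f_n\|_{L^p(\widetilde V^c)}\hat f_n$ pointwise on $V$. Using the pointwise bound $|\hat f_n(t)|\lesssim |\hat J_n|^{1/2}\hat q^{K_n(C_n(t))}/|C_n(t)|$ coming from Proposition~\ref{prop:convinterval} for $t\in V$, together with the bound on $\|\hat f_n\|_{L^p(\widetilde V^c)}$ extracted from Corollary~\ref{cor:geomdecay}, the product $R^{\hat d_n(V)}\|\hat f_n\|_{L^p(\widetilde V^c)}|\hat f_n(t)|$ admits a geometric bound in $K_n(C_n(t))$, provided $R$ is chosen close enough to $1$ (specifically, with $R\cdot\hat q^{1/2}<1$). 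This gives a pointwise estimate of $|h(t)|$ for $t\in V$ by a weighted sum of geometrically decaying kernel-type contributions indexed by $n$. Integrating $|h(t)|^{p'}$ over $V$ and using Lemma~\ref{lem:jinterval_periodic} (and Corollary~\ref{cor:J_n decay}) to control the overlap of characteristic intervals $\hat J_n$ of comparable size near any fixed grid interval in $V$ reorganizes the sum and yields the claimed bound.

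The main obstacle is the coordination of the three factors $R^{\hat d_n(V)}$, $\|\hat f_n\|_{L^p(\widetilde V^c)}$, and $|\hat f_n(t)|$ for $t\in V$: the first may be large when $\hat J_n$ is far from $V$, and when $\hat J_n\subset\widetilde V^c$ the tail norm $\|\hat f_n\|_{L^p(\widetilde V^c)}$ is of the order of the full $L^p$ norm $\|\hat f_n\|_p\sim |\hat J_n|^{1/p-1/2}$, leaving only the pointwise decay $\hat q^{K_n(C_n(t))}$ to carry the estimate. The crucial observation that makes the argument go through—and which is what renders the proof substantially shorter than the one in \cite{GevKam2004}—is that $K_n(C_n(t))\geq \hat d_n(V)$ for every $t\in V$, so a single choice of $R$ depending only on $k$ suffices to guarantee absolute convergence; coupled with the nested structure of the characteristic intervals encoded in Lemma~\ref{lem:jinterval_periodic}, this reduces the dual inequality to a straightforward geometric-series estimate.
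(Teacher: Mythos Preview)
Your duality reduction is correct: since $\hat a_n=\int_V f\hat f_n$, the bound \eqref{eq:lemtechn2} is equivalent, via $\ell^p$--$\ell^{p'}$ duality and H\"older on $V$, to the operator bound
\[
\Big\|\sum_n b_n\,R^{\hat d_n(V)}\|\hat f_n\|_{L^p(\widetilde V^c)}\,\hat f_n\Big\|_{L^{p'}(V)}\lesssim \|b\|_{\ell^{p'}}.
\]
Also correct is the observation $K_n(C_n(t))\geq \hat d_n(V)$ for $t\in V$, which kills the factor $R^{\hat d_n(V)}$ once $R\hat q<1$. But this observation is not what makes the paper's proof shorter than \cite{GevKam2004}; the paper uses it too (the line ``Since $\hat d_n(V)\leq a_{n,j(n)}$\ldots'' right before \eqref{eq:starting_point}). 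After absorbing $R^{\hat d_n(V)}$, the paper is still left with the pointwise sum
\[
\sum_{n\in T_s}\Big(\frac{|\hat J_n|}{|B_n(\widetilde V^c)|}\Big)^{p-1}\frac{|\hat J_n|\,|A_{n,j(n)}|^{p-1}}{|C_n(A_{n,j(n)})|^{p}},
\]
and the entire remainder of the proof---the split into Cases~1 and~2, the further decompositions $S_1,S_2,S_{2,i}$ and $\mathcal G_{s,\ell}^{(i)}$, the integration trick for disjoint $\hat J_n$'s, and finally the Erd\H os--Szekeres argument bounding $|\Lambda_{s,\ell}|$ by a polynomial in $s$---is devoted to showing that this sum is $\lesssim s^2$, so that it can be summed against $\alpha^s$.

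Your dual formulation faces the identical obstruction. After the pointwise bounds from Proposition~\ref{prop:convinterval} and \eqref{eq:intfhat} you are left with a kernel $K(n,t)\sim \alpha^{r_n+K_n(C_n(t))}\,|\hat J_n|\,|B_n(\widetilde V^c)|^{-1/p'}|C_n(t)|^{-1}$, and any route to the $\ell^{p'}\to L^{p'}(V)$ bound (a Schur test, H\"older with weight $\alpha^{s_n(t)}$, or interpolation) requires controlling, for fixed $t\in V$ and fixed level $s$, the number and sizes of those $n$ with $r_n+K_n(C_n(t))=s$. Lemma~\ref{lem:jinterval_periodic} and Corollary~\ref{cor:J_n decay} alone are not enough: they control overlap of $\hat J_n$'s of comparable length at a point, but here the $\hat J_n$'s can lie anywhere in $\mathbb T$ relative to $t$ and to $\widetilde V^c$, the factor $|\hat J_n|/|B_n(\widetilde V^c)|^{1/p'}$ need not be small, and the counting of indices at a fixed level $s$ is exactly what forces the case analysis in the paper. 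Your proposal does not address this; the phrase ``a straightforward geometric-series estimate'' is precisely where the real work lies.
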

\begin{proof}
	We observe first that we can assume that $|V| \leq 1/3$, since otherwise
	$|\widetilde{V}^c|=0$ and the left hand side of \eqref{eq:lemtechn2} is
	zero.

We start by estimating 
	$|\hat a_n|$.
Depending on $n$, we partition $V$ into intervals 
$(A_{n,j})_{j=1}^{N_n}$, where except at most two intervals at the boundary of
$V$, we choose $A_{n,j}$ to be a grid point interval in the grid $\hat{\mathcal
T}_n$.
Let $I_{n,\ell} := [\sigma_{n,\ell},\sigma_{n,\ell+1}]$ 
be the $\ell$th grid point interval in $\hat{\mathcal T}_n$. Moreover, for a
grid point interval $I$ in grid $\hat{\mathcal T}_n$ and all subsets $E\subset
I$, we set $C_n(E)$ to be the
interval given by Proposition \ref{prop:convinterval} that satisfies
\begin{equation*}
	C_n(E) \supset I\cup\hat J_n
\end{equation*}
and $K_n(C_n(I))$ denotes the number of grid points from $\hat{\mathcal T}_n$
that are contained in the set $C_n(I)$.
Next, we define $r_n = \min_{\ell : I_{n,\ell}\cap \widetilde{V}^c\neq \emptyset} 
	K_n(C_n(I_{n,\ell}))$, $a_{n,j}=
	K_n(C_n(A_{n,j}))$ and we choose a number $S>1$ which we will specify
	later and 
estimate by H\"{o}lder's inequality with the dual exponent $p'=p/(p-1)$ to
$p$, 
\begin{align*}
	|\hat a_n| = |\langle f,\hat f_n\rangle| &= \Big| \sum_{j=1}^{N_n} \int_{A_{n,j}} f(t) \hat f_n(t) \dif
	t\Big|  \\
	&\leq \sum_{j=1}^{N_n} \Big( \int_{A_{n,j}}|f(t)|^p\dif t\Big)^{1/p} \Big(
	\int_{A_{n,j}} |\hat f_n(t)|^{p'} \dif t\Big)^{1/p'} \\
	&= \sum_{j=1}^{N_n}  S^{-a_{n,j}}S^{a_{n,j}}\Big( \int_{A_{n,j}}|f(t)|^p\dif t\Big)^{1/p} \Big(
	\int_{A_{n,j}} |\hat f_n(t)|^{p'} \dif t\Big)^{1/p'}\\
	&\leq \Big(\sum_{j=1}^{N_n} S^{-p'a_{n,j}}\Big)^{1/p'}
	\bigg(\sum_{j=1}^{N_n} S^{pa_{n,j}} \int_{A_{n,j}}|f(t)|^p\dif t\cdot \Big(
	\int_{A_{n,j}} |\hat f_n(t)|^{p'} \dif t\Big)^{p-1} \bigg)^{1/p}.
\end{align*}
Since the first sum above is a geometric series and by 
using Corollary \ref{cor:geomdecay} on the integral of $\hat f_n$, we obtain
\begin{equation}
	\label{eq:an}
	|\hat a_n| \lesssim 
	\bigg(\sum_{j=1}^{N_n} S^{pa_{n,j}} \int_{A_{n,j}}|f(t)|^p\dif t\cdot
	|\hat J_n|^{p/2}
	\frac{\hat q^{p a_{n,j}}|A_{n,j}|^{p-1}}{|C_n(A_{n,j})|^p}
	\bigg)^{1/p}.
\end{equation}
We also estimate $\|\hat f_n\|_{L^p(\widetilde{V}^c)}^p$ by Corollary
\ref{cor:geomdecay} and get
\begin{equation*}
	\| \hat f_n \|_{L^p(\widetilde{V}^c)}^p \lesssim |\hat J_n|^{p/2} \hat q^{p r_n}
	\sum_{\ell : \widetilde{V}^c \cap I_{n,\ell}\neq\emptyset}
	\frac{|\widetilde{V}^c\cap I_{n,\ell}|}{|C_n(I_{n,\ell})|^p} = 
|\hat J_n|^{p/2} \hat q^{p r_n}
	\int_{ \widetilde{V}^c } \sum_{\ell : \widetilde{V}^c \cap I_{n,\ell}\neq\emptyset}
	\frac{\charfun_{I_{n,\ell}}(t)}{|C_n(I_{n,\ell})|^p}\dif t.
\end{equation*}
By integration of the function $t\mapsto t^{-p}$, this is dominated by
\begin{equation} \label{eq:intermediate}
	\frac{|\hat J_n|^{p/2} \hat q^{pr_n}}{\min_{\ell:\widetilde{V}^c\cap I_{n,\ell}\neq
\emptyset}|C_n(I_{n,\ell})|^{p-1}}.
\end{equation}
For an arbitrary set $E\subset \mathbb T$,  let $\ell_0(E)$ be an index such that
$I_{n,\ell_0(E)}\cap E\neq \emptyset$ and 
\begin{equation*}
	|C_n(I_{n,\ell_0(E)})| = \min_{\ell : I_{n,\ell}\cap E\neq \emptyset}
	|C_n(I_{n,\ell})|.
\end{equation*}
Then, we introduce one more notation and set $B_n(E) \subset C_n(I_{n,\ell_0(E)})$
to be the largest interval $B$ such that $B\cap E = \hat J_n\cap E$. Obviously 
$B_n(E)\supset \hat J_n$ for every $E$. Using this
notation, we estimate \eqref{eq:intermediate} and conclude
\begin{equation}\label{eq:intfhat}
	 \|\hat f_n\|_{ L^p(\widetilde{V}^c)}^p \lesssim\frac{|\hat J_n|^{p/2}
	\hat q^{pr_n}}{|B_n(\widetilde{V}^c)|^{p-1}}.
\end{equation}

Combining \eqref{eq:an} and \eqref{eq:intfhat} yields
\begin{align*}
	&\sum_{n} R^{p\hat d_n(V)}|\hat a_n|^p \|\hat
	f_n\|_{L^p(\widetilde{V}^c)}^p \\ 
	&\lesssim
	\sum_{n} |\hat J_n|^{p}\hat q^{p r_n}R^{p\hat d_n(V)}
		|B_n(\widetilde{V}^c)|^{1-p}\cdot 
		\bigg(\sum_{j=1}^{N_n} (\hat q S)^{pa_{n,j}} \int_{A_{n,j}}|f(t)|^p\dif t\cdot 
		\frac{|A_{n,j}|^{p-1}}{|C_n(A_{n,j})|^p}
	\bigg).
\end{align*}
Since $(A_{n,j})_{j=1}^{N_n}$ is a partition of $V$ for any $n$, we further
write
\begin{align*}
	&\sum_{n} R^{p\hat d_n(V)}|\hat a_n|^p \|\hat
	f_n\|_{L^p(\widetilde{V}^c)}^p \\ 
	&\lesssim \int_V \sum_{n}
	\Big(\frac{|\hat J_n|}{|B_n(\widetilde{V}^c)|}\Big)^{p-1} \hat q^{pr_n} R^{p\hat
	d_n(V)} 
	\sum_{j=1}^{N_n}(\hat qS)^{pa_{n,j}} \frac{|\hat J_n||A_{n,j}|^{p-1}}{|C_n(A_{n,j})|^p}
\charfun_{A_{n,j}}(t) |f(t)|^p \dif t.
\end{align*}
In order to estimate this by $\int_V |f(t)|^p\dif t$, we estimate pointwise for
fixed $t\in V$. To do this, we first observe that we have to estimate the
expression
\begin{equation*}
	\sum_{n} \Big(\frac{|\hat J_n|}{|B_n(\widetilde{V}^c)|}\Big)^{p-1} \hat q^{pr_n} R^{p\hat
	d_n(V)} 
	(\hat q S)^{pa_{n,j(n)}} \frac{|\hat J_n||A_{n,j(n)}|^{p-1}}{|C_n(A_{n,j(n)})|^p},
\end{equation*}
where $A_{n,j(n)}$ is just the interval $A_{n,j}$ such that $t\in A_{n,j}$.
Next, we split the  summation index set  into $\cup T_{s}$, where
\begin{equation*}
	T_{s} = \{n : r_n+a_{n,j(n)} = s \}.
\end{equation*}
 {Since $\hat d_n(V) \leq a_{n,j(n)}$, we get that if $R,S>1$ are such that
$RS\hat q<1$, there exists $\alpha<1$
depending only on $k$, such that the above expression is $\lesssim$}
\begin{equation}
	\label{eq:starting_point}
	\sum_{s=0}^\infty \alpha^s \sum_{n\in
		T_{s}}\Big(\frac{|\hat J_n|}{|B_n(\widetilde{V}^c)|}\Big)^{p-1}
	\frac{|\hat J_n||A_{n,j(n)}|^{p-1}}{|C_n(A_{n,j(n)})|^p}.
\end{equation}
Now, we split the analysis of this expression into two cases: 

\textsc{Case 1}: $T_{s,1} = \{n\in T_s : |B_n(\widetilde{V}^c)|\leq |B_n(V)|$ or $|V|\leq
|\hat J_n|$\}:

We want to estimate the inner sum in \eqref{eq:starting_point} over $n\in
T_{s,1}$, which in the present case is
immediately estimated by 
\begin{equation*}
	\sum_{n\in T_{s,1}} 
	\frac{|\hat J_n||A_{n,j(n)}|^{p-1}}{|C_n(A_{n,j(n)})|^p }.
\end{equation*}
In order to estimate this sum, we further split the set $T_{s,1}$ into
\begin{align*}
	S_{1} &= \{ n\in T_{s,1} : \hat J_n \text{ contains at least one of the two
	endpoints of $V$}\}, \\
	S_{2} &= T_{s,1} \setminus S_{1}.
\end{align*}
By the conditions of Case 1 and the definition of $\widetilde{V}$, if $n\in
S_{1}$, we have $|\hat J_n| \geq |V|$
and  a geometric decay in the length of $\hat J_n$ by Corollary
\ref{cor:J_n decay}, therefore,
\begin{equation*}
\sum_{n\in S_{1}} 
	\frac{|\hat J_n||A_{n,j(n)}|^{p-1}}{|C_n(A_{n,j(n)})|^p }
	\leq 
	\sum_{n\in S_{1}} 
	\frac{|\hat J_n||V|^{p-1}}{|C_n(A_{n,j(n)})|^p} \leq 
	\sum_{n\in S_{1}} 
	\Big(\frac{|V|}{|\hat J_n|}\Big)^{p-1} \lesssim 1.
\end{equation*}

Next, observe that under the conditions in Case 1 and the definition of
$S_{2}$, we have $|\hat J_n\cap V|=0$ for $n\in S_2$. 
Since additionally $(A_{n,j(n)})$ is a decreasing family of subsets of $V$ and since $r_n+a_{n,j(n)}=s$
for $n\in T_s$, we can split $S_2$ into two subsets $S_{2,1}$ and $S_{2,2}$ such
that for two different indices $n_1,n_2 \in S_{2,i}$ for $i\in \{1,2\}$, we have
that the corresponding intervals $\hat J_{n_1}$ and $\hat J_{n_2}$ are either disjoint or share
an endpoint.

If $n\in S_{2,i}$ then an endpoint $a$ of $B_n(V)$ coincides with an endpoint of
$V$ (since $\hat J_n\subset V^c$). In this case, we let 
$B_n(t)\subset B_n(V)$ be the interval with the endpoints $t$ and $a$ for $t\in
B_n(V)$.
Let $\hat J_n^\beta$ for $\beta=1/4$ be the interval characterized by the properties
\begin{equation*}
	\hat J_n^\beta \subset \hat J_n, \qquad \operatorname{center}(\hat J_n^\beta) =
	\operatorname{center}(\hat J_n),\qquad  |\hat J_n^\beta| = |\hat J_n|/4.
\end{equation*}
By Lemma \ref{lem:jinterval_periodic}, for each
point $u\in\mathbb T$, there exist at most $F_k$ indices in $S_{2,i}$ such that $u\in
\hat J_{n}^\beta$. 
We now enumerate the intervals $\hat J_n$ with $n\in S_{2,i}$ in the following way:
Since those are nested, we write $\hat J_{n_{\ell,1}}$ for the maximal ones under the
inclusion relation and we enumerate as $\hat J_{n_{\ell,j}}$ such that 
$\hat J_{n_{\ell, j+1}} \subset \hat J_{n_{\ell,j}}$ for all $j$. Since an endpoint of
$\hat J_{n_2}\subset \hat J_{n_1}$ for $n_1,n_2\in S_{2,i}$ coincides, for each maximal
interval $\hat J_{n_{\ell,1}}$, we have at most two sequences of this form.

Using this enumeration, we write
\begin{align*}
	\sum_{n\in S_{2,i}} 
	\frac{|A_{n,j(n)}|^{p-1}|\hat J_n|}{|C_n(A_{n,j(n)})|^p } &\leq 
		2\beta^{-1} |V|^{p-1} \sum_{\ell,j} 
		\int_{\hat J_{n_{\ell,j}}^\beta} \frac{\dif t}{|B_{n_{\ell,j}}(t)|^p}.
\end{align*}
Observe that the function 
\begin{equation*}
	x\mapsto | \{ \ell, j, t : t\in \hat J_{n_{\ell,j}}^\beta, x= |B_{n_{\ell,j}}(t)|  \} |
\end{equation*}
is uniformly bounded by $4F_k$ for all $x\geq 0$.
Since we also have the estimate
\begin{equation*}
	|V|/2\leq |B_n(t)|,\qquad n\in S_{2,i}, t\in \hat J_n^\beta, 
\end{equation*}
we conclude
\begin{equation*}
	\beta^{-1} |V|^{p-1} \sum_{\ell,j}
		\int_{\hat J_{n_{\ell,j}}^\beta} \frac{\dif t}{|B_{n_{\ell,j}}(t)|^p}
		\leq 
		4F_k\beta^{-1}|V|^{p-1}\int_{|V|/2}^\infty \frac{\dif x}{x^p}
		\leq
		C_k
\end{equation*}
where $C_k$ is some constant only depending on $k$.
This finishes the proof in the case $n\in T_{s,1}$. 

\textsc{Case 2}: $T_{s,2} = \{n\in T_s : |B_n(V)|\leq |B_n(\widetilde{V}^c)|$
and $|\hat J_n|\leq
|V|$\}: \\
Observe that for $n\in T_{s,2}$,
we have $\hat J_n\subset \widetilde{V}$. 
Next, we subdivide $T_{s,2}$ into generations $\mathcal G_{s,\ell}$ such that
for two indices $n_1,n_2$ in the same generation, the corresponding
characteristic intervals $\hat J_{n_1}$ and $\hat J_{n_2}$ are disjoint. We observe that
from the geometric decay of characteristic intervals, we get that
$|\hat J_n|/|V|\lesssim \kappa^{\ell}$ for some $\kappa<1$ and  $n\in \mathcal G_{s,\ell}$.
Therefore, by introducing $\beta<1$ such that $\beta(p-1)<1$ we 
continue estimating 
\eqref{eq:starting_point} by  using the inequality $|V| \lesssim
	|B_n(\widetilde{V}^c)|$ for $n\in T_{s,2}$,
\begin{equation*}
\sum_{n\in
		T_{s,2}}\Big(\frac{|\hat J_n|}{|B_n(\widetilde{V}^c)|}\Big)^{p-1}
	\frac{|\hat J_n||A_{n,j(n)}|^{p-1}}{|C_n(A_{n,j(n)})|^p} \lesssim
	\sum_{\ell=0}^\infty \kappa^{\ell(1-\beta)(p-1)}
	\sum_{n\in \mathcal G_{s,\ell}} 
	\frac{|\hat J_n|^{1+\beta(p-1)}|A_{n,j(n)}|^{p-1}}{|V|^{\beta(p-1)}|C_n(A_{n,j(n)})|^p}.
\end{equation*}
We further split $\mathcal G_{s,\ell}$ into two collections $\mathcal
G_{s,\ell}^{(i)}$, where
\begin{equation*}
	\mathcal G_{s,\ell}^{(1)} = \{n\in \mathcal G_{s,\ell} :
	|C_n(A_{n,j(n)})| \geq 1-2|V| \},\qquad \mathcal G_{s,\ell}^{(2)} =
	\mathcal G_{s,\ell} \setminus \mathcal G_{s,\ell}^{(1)}.
\end{equation*}
Since we have $|V|\leq 1/3$ and the $\hat J_n$'s in the collection $\mathcal
G_{s,\ell}^{(1)}$ are disjoint, for the sum over the first collection, we immediately see that $\sum_{n\in
	\mathcal G_{s,\ell}^{(1)}}|\hat J_n||A_{n,j(n)}|^{p-1}\leq 1$, so we next
	consider 
\begin{equation}
	\label{eq:sum_geom_beta}
	\sum_{n\in \mathcal G_{s,\ell}^{(2)}} 
	\frac{|\hat J_n|^{1+\beta(p-1)}|A_{n,j(n)}|^{p-1}}{|V|^{\beta(p-1)}|C_n(A_{n,j(n)})|^p}.
\end{equation}
To analyze this expression, we define 
	$C_n'(A_{n,j(n)})$ as 
		$C_n(A_{n,j(n)})$ if $\partial C_{n}(A_{n,j(n)})\cap
		\overline{A_{n,j(n)}}\neq \emptyset$ and as the smallest interval which
		is a subset of $C_{n}(A_{n,j(n)})$ that contains $\hat J_n$ and
	$\partial V\cap \overline{A_{n,j(n)}}$ if $\partial C_{n}(A_{n,j(n)})\cap
		\overline{A_{n,j(n)}}=\emptyset$.
		The canonical case is the first one, the second case can only
		occur if $A_{n,j(n)}$ is not a grid point interval in grid $n$
		which happens only if $A_{n,j(n)}$ lies at the boundary of $V$.
With this definition,
we consider the set of different endpoints of $C_n'(A_{n,j(n)})$
intersecting $\overline{A_{n,j(n)}}$ as
\begin{equation*}
	E_{s,\ell} = \{x\in \partial C_n'(A_{n,j(n)})\cap \overline{A_{n,j(n)}} : n\in
	\mathcal G_{s,\ell}^{(2)}\},
\end{equation*}
enumerate the set $E_{s,\ell}$ by the sequence $(x_r)_{r=1}^\infty$ which by
definition is entirely contained in $\overline{V}$ and split the collection $\mathcal G_{s,\ell}^{(2)}$ according to those
different endpoints into

\begin{equation*}
	\mathcal G_{s,\ell,r}^{(2)} = \{ n\in \mathcal G_{s,\ell}^{(2)} :
	r\text{ is minimal with }x_r\in
	\partial C_n'(A_{n,j(n)})\cap \overline{A_{n,j(n)}} \}.
\end{equation*}

If we set $\Lambda_{s,\ell} = \{r : \mathcal G_{s,\ell,r}^{(2)}\neq \emptyset\}$, we can
write and estimate \eqref{eq:sum_geom_beta}  as
\begin{equation*}
	\sum_{r\in \Lambda_{s,\ell}}\sum_{n\in \mathcal G_{s,\ell,r}^{(2)}}
	\frac{|\hat J_n|^{1+\beta(p-1)}
	|A_{n,j(n)}|^{p-1}}{|V|^{\beta(p-1)}|C_n(A_{n,j(n)})|^{p}} \lesssim
	\frac{1}{|V|^{\beta(p-1)}}\sum_{r\in \Lambda_{s,\ell}} \sum_{n\in \mathcal G_{s,\ell,r}^{(2)}}
	\frac{|\hat J_n|}{|C_{n}'(A_{n,j(n)})|^{1-\beta(p-1)}}.
\end{equation*}
Since the $\hat J_n$'s in the above sum are disjoint,  $\hat J_n\subset
	\widetilde{V}$ and $x_r$ is an endpoint of
$C_{n}'(A_{n,j(n)})$ for all $n\in \mathcal G_{s,\ell,r}^{(2)}$, we can estimate
by integration:
\begin{equation*}
	\frac{1}{|V|^{\beta(p-1)}}\sum_{r\in \Lambda_{s,\ell}} \sum_{n\in \mathcal G_{s,\ell,r}^{(2)}}
	\frac{|\hat J_n|}{|C_{n}'(A_{n,j(n)})|^{1-\beta(p-1)}} \lesssim 
	\frac{1}{|V|^{\beta(p-1)}}\sum_{r\in \Lambda_{s,\ell}} \int_0^{2|V|}
	\frac{1}{t^{1-\beta(p-1)}}  \dif t \lesssim |\Lambda_{s,\ell}|.
\end{equation*}

In order to finish our estimate, we show that $|\Lambda_{s-1,\ell}| < 8 s^2+1=:N$. If we assume
the contrary,
let $(n_i)_{i=1}^{N}$ be an increasing sequence such that 
\begin{equation*}
	n_i \in \mathcal G_{s,\ell,r_{n_i}}^{(2)}
\end{equation*}
for some different values $r_{n_i}$.
Consider $F:=A_{n_{N},j(n_{N})}$ and since the $\hat J_n$'s corresponding to $n_i$ are
disjoint, one of the two connected components of $\widetilde{V}\setminus
F$ contains $(N-1)/2=4s^2$ intervals $\hat J_{n_i}$, $i=1,\ldots,N$.
Enumerate them as $\hat J_{m_1},\ldots, \hat J_{m_{(N-1)/2}}$.

Since any real sequence of length $s^2+1$ has a monotone subsequence of length
$s$, we only have the following two possibilities:
\begin{enumerate}
	\item There is a subsequence $(\ell_i)_{i=1}^{s}$ of the sequence $(m_i)$
		such that, for each $i$,
		\begin{equation*}
			\operatorname{conv}(\hat J_{\ell_i}\cup F) \subset
			\operatorname{conv}(\hat J_{\ell_{i+1}} \cup F )
		\end{equation*}
	\item There is a subsequence $(\ell_i)_{i=1}^{s}$ of the sequence $(m_i)$
		such that, for each $i$,
		\begin{equation*}
			\operatorname{conv}(\hat J_{\ell_{i+1}}\cup F) \subset
			\operatorname{conv}(\hat J_{\ell_i}
			\cup F),
		\end{equation*}
\end{enumerate}
where by $\operatorname{conv}(U)$  for $U\subset \widetilde{V}$ we mean the smallest interval contained in
$\widetilde{V}$ that contains $U$. 

We observe that $\operatorname{conv}(\hat J_{n_i} \cup F) \subset C(A_{n_i,j(n_i)})$
for all $i$ since the sequence $(A_{n_i, j(n_i)})_i$ is decreasing and therefore, in case (1),
we have $a_{\ell_i,j(\ell_i)} \geq i$ and therefore
$a_{\ell_{s},j(\ell_{s})}\geq s$ which is in conflict with the definition of
$T_{s-1,2}$.

We now recall that $r_n = \min_{r\in \mathcal
	I_n(\widetilde{V}^c)}K_n(C_n(I_{n,\ell}))$.
	We let $i(n)$ be an index such that 
	\begin{equation*}
		r_n = K_n(C_n(I_{n,i(n)})).
	\end{equation*}

In case (2), we distinguish the two cases
\begin{enumerate}[(a)]
	\item $C_{\ell_{s}}(I_{\ell_{s},i(\ell_{s})}) \supset \cup_{j=1}^{s}
		\hat J_{\ell_j}$,

	\item $C_{\ell_{s}}(I_{\ell_{s},i(\ell_s)})$ contains the set of
		points $\{x_{r_{\ell_1}}, \ldots, x_{r_{\ell_{s}}}\}$.
\end{enumerate}
If we are in case (a), we have of course $r_n\geq s$ in contradiction to the
definition of $T_{m,2}$. If we are in case (b), since the points
$x_{r_{\ell_i}}$ are all different by definition of $\mathcal
G_{s,\ell,r}^{(2)}$ and they are all (except possibly the two endpoints of $V$) part of the grid points in the grid
corresponding to the index $\ell_{s}$, we have here as well that $r_n\geq s$,
 which shows that $|\Lambda_{s-1,\ell}|\leq 8s^2+1$ and therefore, by collecting
all estimates and summing geometric series over $\ell$ and $s$,
\begin{equation*}
	\sum_{n} R^{p\hat d_n(V)}|\hat a_n|^p \| \hat
	f_n\|_{L^p(\widetilde{V}^c)}^p \lesssim \|f\|_p^p,
\end{equation*}
which finishes the proof of the lemma.
\end{proof}

\section{Proof of the Main Theorem}\label{sec:main}
In this section, we prove our main result Theorem \ref{thm:uncond}, that is
unconditionality of periodic orthonormal spline systems corresponding to 
an arbitrary admissible point sequence $(s_n)_{n\geq 1}$ in 
$L^p(\mathbb T)$ for $p\in(1,\infty)$.
\begin{proof}[Proof of Theorem \ref{thm:uncond}]
We recall the notation
\[
Sf(t)=\Big(\sum_{n\geq N(k)} |a_n\hat f_n(t)|^2\Big)^{1/2},\quad
Mf(t)=\sup_{m\geq N(k)}\Big|\sum_{n=N(k)}^m a_n \hat f_n(t)\Big|
\]
when
\[
f=\sum_{n\geq N(k)} a_n\hat f_n.
\]
Since $(\hat f_n)_{n=1}^\infty$ is a basis in $L^p(\mathbb T),\ 1\leq p<\infty,$
by Theorem \ref{thm:boundedperiodic}, for showing its unconditionality, it
suffices to show that $(\hat f_n)_{n\geq N(k)}$ is an unconditional basis sequence 
in $L^p(\mathbb T)$.
Khintchine's inequality implies that a necessary and sufficient condition for
 this is 
\begin{equation}\label{eq:maintoprove}
\|Sf\|_p \sim_p \|f\|_p,\quad f\in L^p(\mathbb T).
\end{equation}
We will prove \eqref{eq:maintoprove} for $1<p<2$ since the cases $p>2$ then follow by a duality argument.

We first prove the inequality 
\begin{equation}\label{eq:firsttoprove}
\|f\|_p\lesssim_p \|Sf\|_p.
\end{equation}
To begin with, let $f\in L^p(\mathbb T)$ with $f=\sum_{n= N(k)}^\infty a_n f_n$.
Without loss of generality, we may assume that the sequence $(a_n)_{n\geq N(k)}$
has only finitely many nonzero entries. We will prove \eqref{eq:firsttoprove} by showing the inequality $\|Mf\|_p\lesssim_p \|Sf\|_p$ and we first observe that 
\begin{equation}\label{eq:vertfkt}
\|Mf\|_p^p = p \int_0^\infty \lambda^{p-1} \psi(\lambda)\dif \lambda,
\end{equation}
with $\psi(\lambda):=[Mf>\lambda] := \{t\in\mathbb T : Mf(t)>\lambda\}.$ Next we
decompose $f$ into the two parts $\varphi_1,\varphi_2$ and estimate the corresponding distribution functions $\psi_i(\lambda):=[M\varphi_i >\lambda/2]$, $i\in\{1,2\}$, separately. We continue with the definition of the functions $\varphi_i$. 
For $\lambda>0$, we define
\begin{align*}
E_\lambda &:= [Sf>\lambda],& B_\lambda&:=[\hat\cM\charfun_{E_\lambda}>1/2], \\
\Gamma&:=\{n:\hat J_n\subset B_\lambda,N(k)\leq n<\infty\},& \Lambda&:=\Gamma^c,
\end{align*}
where we recall that $\hat J_n$ is the characteristic interval corresponding 
to the grid point $s_n$ and the function $\hat f_n$.
Then, let
\begin{align*}
\varphi_1:= \sum_{n\in\Gamma} a_n\hat f_n\qquad\text{and}\qquad
\varphi_2:=\sum_{n\in\Lambda}a_n\hat f_n.
\end{align*}
Now we estimate $\psi_1=[M\varphi_1>\lambda/2]$:
\begin{align*}
\psi_1(\lambda)
&=|\{t\in B_\lambda: M\varphi_1(t)>\lambda/2\}|+|\{t\notin B_\lambda: M\varphi_1(t)>\lambda/2\}| \\
&\leq |B_\lambda|+\frac{2}{\lambda}\int_{B_\lambda^c}M\varphi_1(t)\dif t \\
&\leq |B_\lambda|+\frac{2}{\lambda}\int_{B_\lambda^c} \sum_{n\in \Gamma}
|a_n\hat f_n(t)|\dif t.
\end{align*}
We decompose the open set $B_\lambda$ into a disjoint collection of open subintervals of
$\mathbb T$ and apply Lemma \ref{lem:techn1} to each of those intervals to conclude from the latter expression
\begin{align*}
\psi_1(\lambda) &\lesssim |B_\lambda|+\frac{1}{\lambda}\int_{B_\lambda} Sf(t)\dif t \\
&= |B_\lambda|+\frac{1}{\lambda}\int_{B_\lambda\setminus E_\lambda} Sf(t)\dif t +\frac{1}{\lambda}\int_{E_\lambda\cap B_\lambda}Sf(t)\dif t \\
&\leq |B_\lambda|+|B_\lambda\setminus E_\lambda|+\frac{1}{\lambda}\int_{E_\lambda} Sf(t)\dif t,
\end{align*}
where in the last inequality, we simply used the definition of $E_\lambda$.
Since the Hardy-Littlewood maximal function operator {$\hat{\mathcal M}$} is of weak type (1,1), $|B_\lambda|\lesssim |E_\lambda|$ and thus we obtain finally
\begin{equation}
\label{eq:main:5}
\psi_1(\lambda)\lesssim |E_\lambda|+\frac{1}{\lambda}\int_{E_\lambda} Sf(t)\dif t.
\end{equation}
We now estimate $\psi_2(\lambda)$ and obtain from Theorem
\ref{thm:hardy-littlewood-periodic} and the fact that $\hat{\mathcal M}$ is a bounded operator on $L^2[0,1]$
\begin{equation*}
\begin{aligned}
\psi_2(\lambda)&\lesssim \frac{1}{\lambda^2}\|\hat\cM \varphi_2\|_2^2\lesssim \frac{1}{\lambda^2}\|\varphi_2\|_2^2=\frac{1}{\lambda^2}\|S\varphi_2\|_2^2 \\
&=\frac{1}{\lambda^2}\Big(\int_{E_\lambda}S\varphi_2(t)^2\dif t+\int_{E_\lambda^c}S\varphi_2(t)^2\dif t\Big).
\end{aligned}
\end{equation*}
We apply Lemma \ref{lem:Sf} {to} the former expression to get
\begin{equation}\label{eq:main:3.5}
\psi_2(\lambda)\lesssim \frac{1}{\lambda^2}\int_{E_\lambda^c} S\varphi_2(t)^2\dif t
\end{equation}
Thus, combining \eqref{eq:main:5} and \eqref{eq:main:3.5},
\begin{equation*}
\begin{aligned}
\psi(\lambda)&\leq \psi_1(\lambda)+\psi_2(\lambda)\\
&\lesssim |E_\lambda|+\frac{1}{\lambda}\int_{E_\lambda}Sf(t)\dif t+\frac{1}{\lambda^2}\int_{E_\lambda^c}Sf(t)^2\dif t.
\end{aligned}
\end{equation*}
Inserting this inequality into \eqref{eq:vertfkt}, 
\begin{equation*}
\begin{aligned}
\|Mf\|_p^p
&\lesssim p\int_0^\infty \lambda^{p-1} |E_\lambda|\dif\lambda+p\int_0^\infty \lambda^{p-2}\int_{E_\lambda} Sf(t)\dif t\dif\lambda \\
&\quad+p\int_0^\infty \lambda^{p-3} \int_{E_\lambda^c}Sf(t)^2\dif t\dif\lambda \\
&=\|Sf\|_p^p+p\int_0^1 Sf(t)\int_0^{Sf(t)}\lambda^{p-2} \dif\lambda\dif t \\
&\quad + p\int_0^1 Sf(t)^2 \int_{Sf(t)}^\infty \lambda^{p-3}\dif\lambda\dif t,
\end{aligned}
\end{equation*}
and thus, since $1<p<2$, 
\[
\|Mf\|_p \lesssim_p \|Sf\|_p.
\]
So, the inequality $\|f\|_p\lesssim_p \|Sf\|_p$ is proved.

We now turn to the proof of the inequality
\begin{equation}\label{eq:mainproofsquarefunction}
\|Sf\|_p\lesssim_p \|f\|_p,\qquad 1<p<2.
\end{equation}
It is enough to show that the operator $S$ is of weak type $(p,p)$ for each exponent $p$ in the range $1<p<2$. This is because $S$ is (clearly) also of strong type $2$ and we can use the Marcinkiewicz interpolation theorem to obtain \eqref{eq:mainproofsquarefunction}.
Thus we have to show
\begin{equation}\label{eq:mainproofweaktypesquarefunction}
|[Sf>\lambda]|\lesssim_p \frac{\|f\|_p^p}{\lambda^p},\qquad f\in L^p(\mathbb T),\ \lambda>0.
\end{equation}
We fix the function $f$ and the parameter $\lambda>0$. To begin with the proof
of \eqref{eq:mainproofweaktypesquarefunction}, we define
$G_\lambda:=[\hat{\mathcal M}f>\lambda]$ for $\lambda>0$ and observe that
\begin{equation}\label{eq:weaktypeGlambda}
|G_\lambda| \lesssim_p \frac{\|f\|_p^p}{\lambda^p},
\end{equation}
since $\hat{\mathcal M}$ is of weak type $(p,p)$, and, by the Lebesgue differentiation theorem,
\begin{equation}\label{eq:lebesgue}
|f|\leq \lambda\qquad\text{a.\,e. on $G_\lambda^c$}.
\end{equation}
We decompose the open set $G_\lambda\subset[0,1]$ into a collection $(V_j)_{j=1}^\infty$ of disjoint open subintervals of $[0,1]$ and split the function $f$ into the two parts $h$ and $g$ defined by
\begin{equation*}
h:=f\cdot\charfun_{G_\lambda^c}+\sum_{j=1}^\infty T_{V_j}f,\qquad g:=f-h,
\end{equation*}
where for fixed index $j$, $T_{V_j}f$ is the projection of $f\cdot\charfun_{V_j}$ onto the space of polynomials of order $k$ on the interval $V_j$.

We treat the functions $h,g$ separately and begin with $h$. The definition of $h$ implies
\begin{equation}\label{eq:hsquared}
\|h\|_2^2=\int_{G_\lambda^c} |f(t)|^2\dif t+\sum_{j=1}^\infty \int_{V_j} (T_{V_j}f)(t)^2\dif t,
\end{equation}
since the intervals $V_j$ are disjoint. We apply the
following argument to the second summand:
by {Corollary \ref{cor:remez}},
\begin{equation*}
	\int_{V_j} (T_{V_j} f)(t)^2\dif t \sim |V_j|^{-1} \Big( \int_{V_j}
	|T_{V_j}f(t)|\dif t\Big)^2.
\end{equation*}
Since $T_{V_j}$ is a bounded operator on $L^1$ (this can be seen as a very
special instance of Shadrin's theorem, Theorem \ref{thm:shadrin}),
\begin{equation*}
	\int_{V_j} (T_{V_j} f)(t)^2\dif t \lesssim |V_j|^{-1} \Big(\int_{V_j}
	|f(t)|\dif t\Big)^2\lesssim (\hat{\mathcal M} f(x))^2 |V_j|\leq
	\lambda^2 |V_j|,
\end{equation*}
where $x$ is a boundary point of $V_j$ and the last inequality follows from the
defining property of $V_j$.
So, by using this estimate, we obtain from \eqref{eq:hsquared}
\[
\|h\|_2^2 \lesssim \lambda^{2-p}\int_{G_\lambda^c} |f(t)|^p\dif t+\lambda^2 |G_\lambda|, 
\]
and thus, in view of \eqref{eq:weaktypeGlambda},
\[
\|h\|_2^2\lesssim_p \lambda^{2-p}\|f\|_p^p.
\]
This inequality allows us to estimate
\begin{equation*}
|[Sh>\lambda/2]|\leq \frac{4}{\lambda^2}\|Sh\|_2^2 = \frac{4}{\lambda^2}\|h\|_2^2\lesssim_p \frac{\|f\|_p^p}{\lambda^p},
\end{equation*}
which concludes the proof of \eqref{eq:mainproofweaktypesquarefunction} for the part $h$.

We turn to the proof of \eqref{eq:mainproofweaktypesquarefunction} for the function $g$. Since $p<2$, we have 
\begin{equation}\label{eq:main:8}
	Sg(t)^p=\Big(\sum_{n\geq N(k)}|\langle g,f_n\rangle|^2
	f_n(t)^2\Big)^{p/2}\leq \sum_{n\geq N(k)} |\langle g,f_n\rangle|^p |f_n(t)|^p
\end{equation}
For each index $j$, we define $\widetilde{V}_j$ to be the open interval with the same center as $V_j$ but with $5$ times its length. Then, set $\widetilde{G}_\lambda:=\bigcup_{j=1}^\infty \widetilde{V}_j$ and observe that $|\widetilde{G}_\lambda|\leq 5|G_\lambda|$. We get
\begin{equation*}
|[Sg>\lambda/2]|\leq |\widetilde{G}_\lambda|+\frac{2^p}{\lambda^p}\int_{\widetilde{G}_\lambda^c}Sg(t)^p\dif t.
\end{equation*}
By \eqref{eq:weaktypeGlambda}  and \eqref{eq:main:8}, this becomes
\[
|[Sg>\lambda/2]|\lesssim_p \lambda^{-p}\Big(\|f\|_p^p+\sum_{n\geq N(k)}^\infty 
\int_{\widetilde{G}_\lambda^c}|\langle g,\hat f_n\rangle|^p|\hat f_n(t)|^p\dif t\Big).
\]
But by definition of $g$ and the fact that $T_{V_j}$ is a bounded operator on
$L^p$,
\[
\|g\|_p^p=\sum_j \int_{V_j} |f(t)-T_{V_j}f(t)|^p\dif t\lesssim_p\sum_j\int_{V_j}|f(t)|^p\lesssim \|f\|_p^p,
\]
so in order to prove the inequality $|[Sg>\lambda/2]|\leq \lambda^{-p}\|f\|_p^p$, it is enough to show the inequality
\begin{equation}\label{eq:main:finaltoprove}
\sum_{n\geq N(k)} \int_{\widetilde{G}_\lambda^c} |\langle g,\hat
f_n\rangle|^p|\hat f_n(t)|^p\dif t\lesssim \|g\|_p^p.
\end{equation}
We now let $g_j:=g\cdot \charfun_{V_j}$. The supports of $g_j$ are therefore disjoint and we have $\|g\|_p^p=\sum_{j=1}^\infty \|g_j\|_p^p$. Furthermore $g=\sum_{j=1}^\infty g_j$ with convergence in $L^p$. Thus for each $n$, we obtain
\[
\langle g,\hat f_n\rangle=\sum_{j=1}^\infty \langle g_j,\hat f_n\rangle,
\]
and it follows from the definition of $g_j$ that
\begin{equation*}
\int_{V_j} g_j(t)p(t)\dif t=0
\end{equation*}
for each polynomial $p$ on $V_j$ of order $k$. This implies that $\langle
g_j,\hat f_n\rangle=0$ for $n<\polyfun(V_j)$, where 
\[
\polyfun(V):=\min\{n:\hat\cT_n\cap V\neq\emptyset\}.
\]
Thus we obtain for all $R>1$ and for every $n$ 
\begin{equation}\label{eq:main:9}
\begin{aligned}
|\langle g,\hat f_n\rangle |^p&=\Big| \sum_{j:n\geq\polyfun(V_j)}\langle
g_j,\hat f_n\rangle\Big|^p\leq \Big(\sum_{j:n\geq
\polyfun(V_j)}R^{\hat d_n(V_j)}|\langle g_j,\hat f_n\rangle|R^{-\hat d_n(V_j)}\Big)^p \\
&\leq \Big(\sum_{j:n\geq \polyfun(V_j)}R^{p \hat d_n(V_j)}|\langle g_j,\hat
f_n\rangle|^p\Big)\Big(\sum_{j:n\geq\polyfun(V_j)}R^{-p'\hat d_n(V_j)}\Big)^{p/p'},
\end{aligned}
\end{equation}
where $p'=p/(p-1)$.
If we fix $n\geq\polyfun(V_j)$, there is at least one point of the partition
$\hat{\mathcal T}_n$ contained in $V_j$. This implies that for each fixed $s\geq 0$,
there are at most two indices $j$ such that $n\geq \polyfun(V_j)$ and $\hat d_n(V_j)=s$. Therefore, 
\[
\Big(\sum_{j:n\geq\polyfun(V_j)}R^{-p'\hat d_n(V_j)}\Big)^{p/p'}\lesssim_p 1,
\]
thus we obtain from \eqref{eq:main:9},
\begin{equation*}
|\langle g,\hat f_n\rangle |^p\lesssim_p \sum_{j:n\geq\polyfun(V_j)}R^{p\hat
d_n(V_j)}|\langle g_j,\hat f_n\rangle|^p.
\end{equation*}
Now we insert this inequality in \eqref{eq:main:finaltoprove} to get
\begin{equation*}
\begin{aligned}
\sum_{n=N(k)}^\infty&\int_{\widetilde{G}_\lambda^c}|\langle g,\hat
f_n\rangle|^p|\hat f_n(t)|^p\dif t \\
&\lesssim_p\sum_{n=N(k)}^\infty\sum_{j:n\geq\polyfun(V_j)}R^{p \hat
d_n(V_j)}|\langle g_j,\hat f_n\rangle|^p\int_{\widetilde{G}_\lambda^c} |\hat f_n(t)|^p\dif t \\
&\leq \sum_{n=N(k)}^\infty\sum_{j:n\geq\polyfun(V_j)}R^{p \hat d_n(V_j)}|\langle
g_j,\hat f_n\rangle|^p\int_{\widetilde{V}_j^c} |\hat f_n(t)|^p\dif t \\
&\leq \sum_{j=1}^\infty\sum_{n\geq\polyfun(V_j)}R^{p \hat d_n(V_j)}|\langle
g_j,\hat f_n\rangle|^p\int_{\widetilde{V}_j^c} |\hat f_n(t)|^p\dif t 
\end{aligned}
\end{equation*}
We choose $R>1$ such that we can apply Lemma \ref{lem:techn2} to obtain
\[
\sum_{n=N(k)}^\infty\int_{\widetilde{G}_\lambda^c}|\langle g,\hat
f_n\rangle|^p|\hat f_n(t)|^p\dif t \lesssim_p \sum_{j=1}^\infty \|g_j\|_p^p = \|g\|_p^p,
\]
proving \eqref{eq:main:finaltoprove} and with it the inequality
$\|Sf\|_p^p\lesssim_p \|f\|_p^p$. Thus the proof of Theorem~\ref{thm:uncond} is completed.
\end{proof}

\subsection*{Acknowledgments}
K.~Keryan was  supported by SCS RA grant 15T-1A006 and M.~Passenbrunner
	was 
supported by the FWF, project number P27723.
Part of this work was done while K.~Keryan was visiting the Department of
Analysis, J. Kepler University Linz in January 2017.

\bibliographystyle{plain}
\bibliography{periodic}

\def\cprime{$'$}
\begin{thebibliography}{10}

\bibitem{Bockarev1975}
S.~V. Bo{\v{c}}karev.
\newblock Some inequalities for {F}ranklin series.
\newblock {\em Anal. Math.}, 1(4):249--257, 1975.

\bibitem{Boehm1980}
W.~B{\"o}hm.
\newblock Inserting new knots into {B}-spline curves.
\newblock {\em Computer-Aided Design}, 12(4):199 -- 201, 1980.

\bibitem{Ciesielski1975}
Z.~Ciesielski.
\newblock Equivalence, unconditionality and convergence a.e. of the spline
  bases in {$L_{p}$} spaces.
\newblock In {\em Approximation theory ({P}apers, {VI}th {S}emester, {S}tefan
  {B}anach {I}nternat. {M}ath. {C}enter, {W}arsaw, 1975)}, volume~4 of {\em
  Banach Center Publ.}, pages 55--68. PWN, Warsaw, 1979.

\bibitem{Ciesielski2000}
Z.~Ciesielski.
\newblock Orthogonal projections onto spline spaces with arbitrary knots.
\newblock In {\em Function spaces ({P}ozna\'n, 1998)}, volume 213 of {\em
  Lecture Notes in Pure and Appl. Math.}, pages 133--140. Dekker, New York,
  2000.

\bibitem{deBoor2012}
C.~de~Boor.
\newblock On the (bi)infinite case of {S}hadrin's theorem concerning the
  {$L_\infty$}-boundedness of the {$L_2$}-spline projector.
\newblock {\em Proc. Steklov Inst. Math.}, 277(suppl. 1):73--78, 2012.

\bibitem{Demko1977}
S.~Demko.
\newblock Inverses of band matrices and local convergence of spline
  projections.
\newblock {\em SIAM J. Numer. Anal.}, 14(4):616--619, 1977.

\bibitem{DeVoreLorentz1993}
R.~A. DeVore and G.~G. Lorentz.
\newblock {\em Constructive approximation}, volume 303 of {\em Grundlehren der
  Mathematischen Wissenschaften [Fundamental Principles of Mathematical
  Sciences]}.
\newblock Springer-Verlag, Berlin, 1993.

\bibitem{Domsta1976}
J.~Domsta.
\newblock A theorem on {B}-splines. {II}. {T}he periodic case.
\newblock {\em Bull. Acad. Polon. Sci. Ser. Sci. Math. Astronom. Phys.},
  24:1077--1084, 1976.

\bibitem{GevorkyanKamont1998}
G.~G. Gevorkyan and A.~Kamont.
\newblock On general {F}ranklin systems.
\newblock {\em Dissertationes Math. (Rozprawy Mat.)}, 374:1--59, 1998.

\bibitem{GevKam2004}
G.~G. Gevorkyan and A.~Kamont.
\newblock Unconditionality of general {F}ranklin systems in {$L^p[0,1],
  1<p<\infty$}.
\newblock {\em Studia Math.}, 164(2):161--204, 2004.

\bibitem{GevorkyanSahakian2000}
G.~G. Gevorkyan and A.~A. Sahakian.
\newblock Unconditional basis property of general {F}ranklin systems.
\newblock {\em Izv. Nats. Akad. Nauk Armenii Mat.}, 35(4):7--25, 2000.

\bibitem{Keryan2005}
K.~Keryan.
\newblock Unconditionality of general periodic {F}ranklin systems in
  ${L}^p[0,1],$ $1<p<\infty$.
\newblock {\em J. Contemp. Math. Anal.}, 40(1):13--55, 2005.

\bibitem{Passenbrunner2014}
M.~Passenbrunner.
\newblock Unconditionality of orthogonal spline systems in {$L^p$}.
\newblock {\em Studia Math.}, 222(1):51--86, 2014.

\bibitem{Passenbrunner2017}
M.~Passenbrunner.
\newblock Orthogonal projectors onto spaces of periodic splines.
\newblock {\em Journal of Complexity}, 42:85--93, 2017.

\bibitem{PassenbrunnerShadrin2013}
M.~Passenbrunner and A.~Shadrin.
\newblock On almost everywhere convergence of orthogonal spline projections
  with arbitrary knots.
\newblock {\em J. Approximation Theory}, 180:77--89, 2014.

\bibitem{Shadrin2001}
A.~Shadrin.
\newblock The {$L_\infty$}-norm of the {$L_2$}-spline projector is bounded
  independently of the knot sequence: a proof of de {B}oor's conjecture.
\newblock {\em Acta Math.}, 187(1):59--137, 2001.

\end{thebibliography}

\end{document}